\numberwithin{equation}{section}
\newcommand{\sgn}{\operatorname{sgn}}
\newcommand{\tr}{\operatorname{Tr}}
\renewcommand{\P}[1]{\mathbb{P}\left[#1\right]}
\newcommand{\E}[1]{\mathbb{E}\left[#1\right]}
\newcommand{\Oo}[2]{\underset{#1}{\mho_{#2}}}
\newcommand{\1}{\mathds{1}}
\newcommand{\A}{\mathfrak{A}}
\newcommand{\C}{\mathbb{C}}
\newcommand{\Ci}{\operatorname{Ci}}
\newcommand{\Cin}{\operatorname{Cin}}
\newcommand{\Cau}{\mathcal{C}}
\newcommand{\f}{\mathrm{f}}
\newcommand{\F}{\mathscr{F}}
\newcommand{\g}{\mathrm{g}}
\newcommand{\Id}{\mathrm{I}}
\newcommand{\J}{\chi}
\newcommand{\Hi}{\mathscr{H}}
\newcommand{\N}{\mathbb{N}}
\newcommand{\pii}{2 \pi i}
\newcommand{\Pt}{\widetilde{\mathbb{P}}}
\newcommand{\R}{\mathbb{R}}
\renewcommand{\S}{\mathcal{S}}
\newcommand{\T}{\mathbb{T}}
\newcommand{\Res}{L}
\renewcommand{\u}{\mathbf{u}}
\newcommand{\X}{\widetilde{X}}
\def\Xint#1{\mathchoice
{\XXint\displaystyle\textstyle{#1}}%
{\XXint\textstyle\scriptstyle{#1}}%
{\XXint\scriptstyle\scriptscriptstyle{#1}}%
{\XXint\scriptscriptstyle%
\scriptscriptstyle{#1}}%
\!\int}
\def\XXint#1#2#3{{\setbox0=\hbox{$#1{#2#3}{\int}$ }
\vcenter{\hbox{$#2#3$ }}\kern-.6\wd0}}
\def\dashint{\Xint-}
\newtheorem{theorem}{Theorem}[section]
\newtheorem{proposition}[theorem]{Proposition}
\newtheorem{corollary}[theorem]{Corollary}
\newtheorem{conjecture}[theorem]{Conjecture}
\newtheorem{lemma}[theorem]{Lemma}
 \newtheorem{remark}[theorem]{Remark}
\newtheorem{assumption}{Assumption}[section]
\begin{document}
\date{}

\title{\bf Subcritical multiplicative chaos for regularized counting statistics from random matrix theory}
\author{Gaultier Lambert\thanks{Department of Mathematics, Universit\"at Z\"urich, gaultier.lambert@math.uzh.ch. G. L.~gratefully acknowledges the support of the grant
KAW 2010.0063 from the Knut and Alice Wallenberg Foundation while at KTH, Royal Institute of Technology.}\ , Dmitry Ostrovsky\thanks{Stamford, CT, USA. dm\_ostrov@aya.yale.edu.}\, and Nick Simm\thanks{School of Mathematical and Physical Sciences, Department of Mathematics, University of Sussex, Falmer Campus, Brighton, BN1 9QH, UK. N.J.Simm@sussex.ac.uk. N. S. gratefully acknowledges financial support from a Leverhulme Trust Early Career Fellowship ECF-2014-309.}}
\maketitle

\begin{abstract}
For an $N \times N$ Haar distributed random unitary matrix $U_{N}$, we consider the random field  defined by counting the number of eigenvalues of $U_{N}$ in a mesoscopic arc centered at the point $u$ on the unit circle. We prove that after regularizing at a small scale $\epsilon_{N}>0$, the renormalized exponential of this field converges as $N \to \infty$ to a Gaussian multiplicative chaos measure in the whole subcritical phase. We discuss implications of this result for obtaining a lower bound on the maximum of the field. We also show that the moments of the total mass converge to a Selberg-like integral and by taking a further limit as the size of the arc diverges, we establish part of the conjectures in \cite{Ost16}. 
By an analogous construction, we prove that the multiplicative chaos measure coming from the sine process has the same distribution, which strongly suggests that this limiting object should be universal. 
Our approach to the $L^{1}$-phase is based on a generalization of the construction in Berestycki \cite{Berestycki15} to random fields which are only \textit{asymptotically} Gaussian. In particular, our method could have applications to other random fields coming from either random matrix theory or a different context. 
\end{abstract}

\section{Introduction}
\subsection{Background and results for the CUE} \label{sect:BR}
The study of Gaussian fields with logarithmic correlations has seen many recent developments in the last few years. One of those concerns a relation to the eigenvalues of random matrices, which can be traced back to a work of Hughes, Keating and O'Connell \cite{HKC01}. They studied the characteristic polynomial of  random $N \times N$ matrices $U_N$ sampled from the unitary group according to the Haar measure, also known as the Circular Unitary Ensemble (CUE). One of their key results was to prove that $Z_{N}(\theta) = \sqrt{2}\log|\det(e^{i\theta}-U_N)|$ converges in law as $N \to \infty$ to a log-correlated Gaussian field which corresponds to the restriction of the two-dimensional Gaussian Free Field on the unit circle. This field can be represented as the Fourier series:
\begin{equation}
Z(\theta) = \frac{1}{\sqrt{2}}\mathrm{Re}\bigg\{\sum_{k=1}^{\infty}\frac{e^{-ik\theta}}{\sqrt{k}}\,Z_{k}\bigg\} \label{1overf} ,
\end{equation}
where $Z_{k}$ are i.i.d. standard complex Gaussian variables. In particular, almost surely, this series does not converge in $L^2(\T)$, and $Z(\theta)$ only exists as a random distribution, or a generalized random function with correlation kernel
$\mathbb{E}(Z(\theta)Z(\theta')) = -\log|e^{i\theta}-e^{i\theta'}|$.

More recent developments concern the extreme value statistics of the field $Z_{N}(\theta)$ and on the related issue of making sense of its exponential in the limit as $N \to \infty$. The authors of \cite{FHK12,FK14} gave a very precise conjecture for the maximum value of $Z_{N}(\theta)$, which has recently seen significant progress \cite{ABB16, PZ16, CNM16}. This conjecture is intimately related to the following\footnote{For convenience we state conjecture 1.1 for $q \in \mathbb{N}$. The full conjecture given by these authors involves an analytic continuation to $q \in \mathbb{C}$ which allowed them to calculate the extreme value statistics of $Z_{N}(\theta)$ by invoking the so-called `freezing scenario' expected of log--correlated Gaussian processes.} concerning the total mass of the field $Z_{N}(\theta)$.

\begin{conjecture}[Fyodorov and Keating \cite{FK14}]
Let $\gamma>0$ and define
\begin{equation}
M^{\gamma}_{N} := \int_{0}^{2\pi}e^{\gamma Z_{N}(\theta)}\,d\theta \label{totalmass}
\end{equation}
For any $q \in \mathbb{N}$ such that  $\gamma^{2} q < 2$, we have
\begin{equation}
\begin{split}
\lim_{N \to \infty}N^{-\frac{\gamma^{2} q}{2}}\mathbb{E}[(M^{\gamma}_{N})^{q}] &= C_{\gamma,q}\int_{[0,2\pi]^{q}}\prod_{1 \leq j < k \leq q}|e^{i\theta_j}-e^{i\theta_k}|^{-\gamma^{2}}\,d\theta_1\ldots d\theta_q\\
&= C_{\gamma,q}\frac{\Gamma(1-\gamma^{2} q/2)}{\Gamma(1-\gamma^{2}/2)^{q}} \label{FKconj}
\end{split}
\end{equation}
where $C_{\gamma,q} = (2\pi)^{q}\frac{G(1+\gamma/\sqrt{2})^{2q}}{G(1+\gamma\sqrt{2})^{q}}$ and $G(z)$ is the Barnes G-function. \label{co:FKconj}
\end{conjecture}
The case $q=2$ of Conjecture \ref{co:FKconj} was solved by Claeys and Krasovsky \cite{CK15} by a rather delicate asymptotic analysis of Toeplitz determinants with merging Fisher-Hartwig singularities\footnote{Claeys and Krasovsky also obtain the asymptotics for the so-called critical and super-critical regimes of the second moment ($\gamma=1$ and $\gamma > 1$ respectively). For general $q$ with $\gamma^{2}q>2$ the asymptotic order of magnitude was conjectured by Fyodorov and Keating \cite{FK14}.}. For $q \neq 1, 2$, this conjecture remains an open problem. On the other hand, the normalization by $N^{-\frac{\gamma^{2}q}{2}}$ on the left-hand side of \eqref{FKconj} suggests that $N^{-\gamma^{2}/2}M^{\gamma}_{N}$ might converge in distribution to a non-trivial limiting random variable as $N \to \infty$, at least for some range of $\gamma$ values. This has been shown by Webb \cite{W15} for any $\gamma < 1$. The condition $\gamma < 1$ is called the $L^{2}$-phase because it is precisely the regime where the second moment (case $q=2$ of \eqref{FKconj}) is finite. Webb showed that the limiting random variable can be described in terms of \textit{Gaussian multiplicative chaos} (GMC), a theory devoted to properly defining the exponential of a log-correlated Gaussian field. These exponentials are interpreted as random measures and are naturally linked to the geometric properties of the fields.
This topic is currently under intense investigation, partly because of its applications in subjects such as Liouville quantum gravity, turbulence or mathematical finance, see \cite{RV14} for a review. We provide a  detailed discussion of GMC in Appendix \ref{sect:GMC}, but the basic idea can be summarised as follows. Consider a  Gaussian field $G(u)$ on a domain $\A \subset \mathbb{R}^{d}$ with mean zero and covariance
\begin{equation}
\mathbb{E}(G(u)G(v)) = \log\frac{1}{|u-v|}+g(u,v) \label{Gcov}
\end{equation}	
where $g(u,v)$ is bounded and continuous. Because of the logarithmic singularity in \eqref{Gcov}, $G(u)$ is not an ordinary function, so one wishes to regularize it in some way, obtaining $G_{\epsilon}(u)$ which is a smooth Gaussian field for every finite $\epsilon>0$. Then one may consider the exponential of the field $G_{\epsilon}(u)$ and remove the diverging part in order to take the limit as $\epsilon\to 0$. Typically, one finds that this limit does not depend on the regularization procedure. 
This was first established by Kahane, \cite{KP76,Kah85}, using martingale methods and more recently extended to other regularizations such as convolutions.

\begin{theorem}[Robert and Vargas, \cite{RV10}, Berestycki \cite{Berestycki15}]
\label{th:GMCintro}
 Let $G$ be a log--correlated Gaussian field and $\phi$ a smooth probability density function (mollifier) on $\R^d$. For any $\epsilon>0$, let $G_{\epsilon} = G \ast \phi_{\epsilon}$ be the convolution of $G$ with $\phi_{\epsilon}(x) := \frac{1}{\epsilon}\phi(\frac{x}{\epsilon})$. For any function $w \in L^{1}(\R^d)$ uniformly bounded with compact support,  define the family of random variables
\begin{equation}
\nu^{\gamma}_{\epsilon}(w) = \int e^{\gamma G_{\epsilon}(u) - \frac{\gamma^{2}}{2}\mathrm{Var}(G_{\epsilon}(u))}w(u)\,du .  \label{gmcnu}
\end{equation}
 Then for every $\gamma < \sqrt{2d}$,  $\nu^{\gamma}_{\epsilon}(w) $ converges in $L^1(\mathbb{P})$ to a non-trivial random variable  $\nu^{\gamma}(w) $ as $\epsilon \to 0$. 
 This limit does not depend on the mollifier $\phi$ and $\nu^{\gamma}$ defines a random measure which is called the GMC measure associated with the field $G$.
 \end{theorem}

The aim of this paper is to establish proofs of this convergence for fields which are only \textit{asymptotically} Gaussian and log-correlated. Our main interest in such fields will be those which naturally arise in random matrix theory, but our theory is likely to apply in other situations. 

We now define our main objects of study for the CUE. Let $e^{i\theta_1},\ldots,e^{i\theta_N}$ denote the eigenvalues of a Haar distributed random unitary matrix $U_{N}$, with the convention that $\theta_{1},\ldots,\theta_{N} \in [-\pi,\pi)$. We consider the random process $u \to W_{N}(u)$ which counts the number of eigenangles in an interval centered around $u$:
\begin{equation}
W_{N}(u) = \sum_{j=1}^{N}\chi_{u}(N^{\alpha}\theta_j) \label{indicator} , 
\end{equation}
where $\J_u(x) = \pi \1_{|x-u| \le \ell/2}$ for some fixed $\ell>0$. We emphasize that throughout the paper, the role played by $u$ is that of the spatial variable of the random process under consideration. The parameter $\alpha$ is called the spectral scale and the process \eqref{indicator} can be studied in three different regimes. 
In the microscopic regime $\alpha=1$, the field $W_N(u)$ converges weakly as $N\to\infty$ to a counting function for the sine process which is not a Gaussian process, but we will come back to this in Section \ref{sect:sine}. 
On the other hand, either in the mesoscopic regime\footnote{Mesoscopic means that the number of objects (zeroes, eigenvalues) 
that are being counted goes to infinity, whereas the length of the interval, 
over which they are being counted, goes to zero.} $0 < \alpha < 1$ or in the global regime $\alpha=0$, if centered, the process $W_{N}(u)$ will converge in a certain sense to a log--correlated Gaussian field. 
We will focus on the mesoscopic regime where the geometry of the spectrum is unimportant, although our theory also applies for $\alpha=0$ with minor changes. Inspired by Theorem \ref{th:GMCintro}, we will work with the following family of regularizations instead of working directly with 
the counting statistics \eqref{indicator}. Let $\phi$ be a mollifier and define
\begin{equation}
X_{N,\epsilon}(u) :=  \sum_{j=1}^{N}( \J_u\ast \phi_{\epsilon})(N^{\alpha}\theta_j) \label{smoothing}
\end{equation}
where $\phi_{\epsilon}(\theta) = \frac{1}{\epsilon}\phi\left(\frac{\theta}{\epsilon}\right)$. Throughout the paper, we use the notation $\overline{X}_{N,\epsilon_N} := X_{N,\epsilon_N}-\mathbb{E}X_{N,\epsilon_N}$ to denote a recentering by the expectation. The smoothed fields \eqref{smoothing} were introduced in \cite{Ost16} in the context of counting statistics of the Riemann zeros. 
 The parameter $\epsilon > 0$ controls the scale of regularization for $X_{N}(u)$, which we will allow to converge to zero as the dimension $N\to\infty$. Roughly speaking, the speed at which the sequence $\epsilon_N\to0$ controls how close the centered field $\overline{X}_{N,\epsilon_N}$ is to a Gaussian process for large $N$.

For fixed $\epsilon>0$, \eqref{smoothing} is a smooth linear statistic and by Soshnikov's central limit theorem (CLT) \cite{S00a}, for every $u\in\R$, we have the convergence in distribution to a Gaussian random variable,
\begin{equation}
\overline{X}_{N,\epsilon}(u)  \Rightarrow  \mathcal{N}\left(0,\int_{\mathbb{R}}|k||\widehat{\J_u\ast \phi_{\epsilon}}(k)|^{2}\,dk\right),  \label{CLT}
\end{equation} 
as $N\to\infty$, see formula \ref{Fourier_T} for our normalization of the Fourier transform. A direct calculation with the limiting variance in \eqref{CLT} easily shows (using \textit{e.g.} the fact that the Fourier transform of $\J_u$ decays as $1/k$ for large $k$) that $\mathrm{Var}(X_{N,\epsilon}(u)) \sim \log\frac{1}{\epsilon}$ as $N\to\infty$ followed by $\epsilon \to 0$. In general, the covariance structure coming from Soshnikov's CLT is associated with an $H^{1/2}$ Sobolev space in the following way: 
for any functions  $g,h\in C^1_0(\R)$,
\begin{equation}\label{Sos}
\begin{split}
\lim_{N \to \infty}\mathrm{Cov}\left(\sum_{j=1}^{N}g(N^{\alpha}\theta_j),\sum_{j=1}^{N}h(N^{\alpha}\theta_j)\right) &= \int_{\mathbb{R}}\,|k|\hat{g}(k)\hat{h}(-k)\,dk\\
&= \frac{1}{2\pi^{2}}\int_{\mathbb{R}^{2}}g'(x)h'(y)\log\frac{1}{|x-y|}\,dx\,dy . 
\end{split}
\end{equation}

This is suggestive of a logarithmic covariance structure underlying the mesoscopic statistics of CUE eigenvalues. Applied to the statistic \eqref{smoothing}, one can easily show that for $u,v \in \mathbb{R}$ fixed $(u \neq v)$, we have
\begin{equation} \label{cov'}
\lim_{\epsilon \to 0}\lim_{N \to \infty}\mathrm{Cov}(X_{N,\epsilon}(u),X_{N,\epsilon}(v)) = \log\frac{1}{|u-v|}+\log|\ell^{2}-(u-v)^{2}|^{1/2}. 
\end{equation}

By analogy with \eqref{gmcnu}, we are going to study the convergence of the following quantity as $N \to \infty$, where $\epsilon$ may converge to zero with $N$.
\begin{equation}
\mu^{\gamma,\mathrm{CUE}}_{N,\epsilon}(w) := \int_{\mathbb{R}}\,e^{\gamma \overline{X}_{N,\epsilon}(u)-\frac{\gamma^{2}}{2}\mathrm{Var}(\overline{X}_{N,\epsilon}(u))}w(u)\,du, \label{muintro}
\end{equation}
where $w \in L^{1}(\mathbb{R})$ has compact support. 

\begin{theorem}
\label{th:gmc}
Consider the regularized statistic \eqref{smoothing} with $\ell>0$ and mesoscopic scale $0 < \alpha < 1$ fixed. Suppose that $\epsilon_{N} \to 0$ in such a way that for some $\kappa>0$, 
\begin{equation}
N^{\alpha-1}\epsilon_{N}^{-1} = \underset{N\to\infty}{O}(N^{-\kappa})  \label{c1}.
\end{equation} 
Then for every $0 < \gamma < \sqrt{2}$ and $w\in L^1(\R)$ uniformly bounded with compact support, we have the convergence in distribution
\begin{equation}
\mu^{\gamma,\mathrm{CUE}}_{N,\epsilon_N}(w)  \Rightarrow \nu^{\gamma}(w), \qquad N \to \infty , \label{convdist}
\end{equation}
where $\nu^{\gamma}$ is the GMC measure defined in Theorem \ref{th:GMCintro} with $g(u,v) = \log|\ell^{2}-|u-v|^{2}|^{1/2}$ in the covariance formula \eqref{Gcov}.
\end{theorem}

\begin{remark}
The reader may observe that $g(u,v)$ in Theorem \ref{th:gmc} is not bounded below, as in the hypothesis of Theorem \ref{th:GMCintro}. However, it is easy to see that precisely the same steps carried out in \cite{Berestycki15} for constructing the multiplicative chaos measure show that this assumption is not required, the main crucial point being that $g(u,v)$ is bounded above.
\end{remark}
\begin{remark}
In the following, we interpret $\nu^{\gamma}_{\epsilon}$ in \eqref{gmcnu} and $\mu^{\gamma, \mathrm{CUE}}_{N,\epsilon}$ in \eqref{muintro} as absolutely continuous random measures and we will use the notation $\nu^{\gamma}_{\epsilon}(S) = \nu^{\gamma}_{\epsilon}(\1_S)$ for any compact Borel subset $S \subset \mathbb{R}^d$ and similarly for $\mu^{\gamma, \mathrm{CUE}}_{N,\epsilon_N}$. 
\end{remark}

 The condition \eqref{c1} is rather natural and we will discuss its meaning and necessity in the next subsection, but we mention that its importance was first emphasized for smoothed statistics of the Riemann zeros in work of the second author \cite{Ost16}. There it is shown that \eqref{c1} is the natural slow decay condition both for keeping the statistic 
mesoscopic and for preserving its $H^{1/2}$-Gaussianity 
under smoothing.
  
It turns out that the appearance of the $H^{1/2}$-Gaussian noise is remarkably universal in the mesoscopic limit of one dimensional ensembles with random matrix type repulsion and this problem has attracted renewed interest in the last couple of years. For instance, the analogue of Soshnikov's CLT \eqref{Sos} was obtained for the GUE \cite{FKS16}, more general invariant ensembles \cite{BD16, Lam15}, Wigner matrices \cite{EK15,LS15,HK16}, $\beta$-ensembles \cite{BEYY15, BL16} and for zeros of the Riemann zeta function \cite{BK14,R14}. 
 It is likely the counterpart of Theorem \ref{th:gmc} continues to hold for these models as well. To add some weight to this assertion, we will present the proof of Theorem~\ref{th:gmc} under some general criteria (see Theorem~\ref{th:GMCgeneral} below) and provide an analogous result for the sine process, that is the random point process which describes the microscopic limit of a wide class of Hermitian random matrices, see Section~\ref{sect:sine}.
Thus, from the point of view of Gaussian multiplicative chaos, one may view the different ensembles of random matrices as alternative ways of regularizing a log-correlated Gaussian field.\\

 We also prove a result establishing the convergence of the $q$-moments of \eqref{muintro} and a relation to Selberg integrals. Although $\ell$ is fixed in Theorem \ref{th:gmc}, we now consider the case where $\ell = L(N) \to \infty$ as $N \to \infty$. Similarly, the rate at which $L(N) \to \infty$ is naturally restricted by the mesoscopic scale,
\begin{equation}
L(N)\log(N) = \underset{N\to\infty}{o}(N^{\alpha}) \label{c2}.
 \end{equation}
Under this restriction we prove that the moments of the total mass are given by ratios of Gamma functions in the limit $N \to \infty$.

\begin{theorem}(Moments of the total mass and Selberg integrals) \label{th:moments}
Let $0 < \alpha < 1$ and suppose that the conditions \eqref{c1} and \eqref{c2} are operative. Then for any $r>0$ and $q \in \mathbb{N}$ such that $\gamma^{2}q <  2$, we have
\begin{equation}
\begin{split}
& \lim_{N \to \infty}L(N)^{-\gamma^2\binom{q }{2}}\E{(\mu^{\gamma,\mathrm{CUE}}_{N,\epsilon_N}([0,r]))^{q}} = \lim_{\ell \to \infty}\lim_{\epsilon \to 0}\lim_{N \to \infty} \ell^{-\gamma^2\binom{q}{2}}\E{(\mu^{\gamma,\mathrm{CUE}}_{N,\epsilon}[0,r])^{q}}\\
&\qquad= \int_{[0,r]^{q}}\,|\Delta(\vec{u})|^{-\gamma^2}\,du_{1}\ldots du_{q} = r^{q+\gamma^2\binom{q}{2}}\prod_{j=0}^{q-1}\frac{\Gamma(1-(j+1)\gamma^2/2)\Gamma^{2}(1-j\gamma^2/2)}{\Gamma(1-\gamma^2/2)\Gamma(2-(q+j-1)\gamma^2/2)}, \label{largeint} 
\end{split}
\end{equation}
where $\Delta(\vec{u}) = \prod_{1 \leq j < k \leq q}(u_{k}-u_{j})$.
\end{theorem}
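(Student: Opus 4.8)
The plan is to compute the $q$-th moment $\E{(\mu^{\gamma}_{N,\epsilon_N}[0,r])^{q}}$ by expanding it as a $q$-fold integral over $[0,r]^q$ of the joint exponential moments of the field $\X^\gamma_{N,\epsilon_N}$ at points $u_1,\dots,u_q$, and then passing to the limit using the strong Gaussian approximation \eqref{exp_moments_condition}. Concretely, writing $\mu^{\gamma}_{N,\epsilon_N}[0,r] = \int_0^r e^{\X^\gamma_{N,\epsilon_N}(u)}\,du$ and using Fubini,
\begin{equation*}
\E{(\mu^{\gamma}_{N,\epsilon_N}[0,r])^{q}} = \int_{[0,r]^q} \E{\exp\left(\sum_{k=1}^q \gamma \overline{X}_{N,\epsilon_N}(u_k) - \frac{\gamma^2}{2}\Var(X_{N,\epsilon_N}(u_k))\right)} du_1\cdots du_q .
\end{equation*}
By \eqref{exp_moments_condition} with $t_k=\gamma$ and $\epsilon_k=\epsilon_N$, the integrand is asymptotically $\exp\big(\frac{\gamma^2}{2}\sum_{j\neq k} T_{\epsilon_N,\epsilon_N}(u_j,u_k)\big)$, and since $T_{\epsilon_N,\epsilon_N}(u_j,u_k) \sim \log(\min\{|u_j-u_k|^{-1},\epsilon_N^{-1}\})$, for fixed distinct $u_j$ this converges to $-\log|u_j-u_k|$ once $\epsilon_N \to 0$. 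Thus the integrand converges pointwise (off the diagonals) to $\prod_{j<k}|u_j-u_k|^{-\gamma^2}$, which is the Selberg/Dyson integrand.

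The first real issue is \emph{integrability and domination}: near the diagonals $u_j=u_k$, the limiting integrand has a non-integrable-looking singularity, but it is in fact integrable precisely when $\gamma^2 q<2$ (this is the Dyson condition, cited in the excerpt). For the pre-limit integrand, the regularization at scale $\epsilon_N$ caps the singularity at size $\epsilon_N^{-\gamma^2}$, so there is no blow-up for fixed $N$; to justify exchanging limit and integral one needs a uniform (in $N$) integrable dominating function. I would obtain this by a dyadic decomposition of $[0,r]^q$ according to the configuration of gaps between the $u_j$'s — the standard multi-scale bound used in GMC moment computations (as in Berestycki \cite{Berestycki15}, Rhodes–Vargas \cite{RV14}) — using the upper bound $T_{\epsilon,\epsilon}(u,v) \le \log(|u-v|^{-1}) + O(1)$ together with Hölder's inequality to control the off-diagonal cross terms; the condition $\gamma^2 q<2$ makes the resulting sum over scales converge. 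This gives the first equality line: $\lim_N L(N)^{-\gamma^2\binom q2}\E{(\mu^{\gamma}_{N,\epsilon_N}[0,r])^q}$ with the $L(N)^{\gamma^2\binom q2}$ normalization being exactly the contribution of the $\log|\ell^2-(u-v)^2|^{1/2} \approx \log\ell$ term in the covariance \eqref{cov} when $\ell=L(N)\to\infty$ — here condition \eqref{c2} is what keeps $\ell$ in the mesoscopic window so that \eqref{exp_moments_condition} still applies with $T$ replaced by the $\ell$-dependent kernel $Q$ of \eqref{cov}, and one checks $Q(u-v) = \log|u-v|^{-1} + \log\ell + o(1)$ uniformly for $u,v$ in a fixed compact set. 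The iterated-limit form (second member of \eqref{largeint}) follows from the same analysis done in two stages: first $N\to\infty$ then $\epsilon\to0$ at fixed $\ell$ gives $\E{(\mu^\gamma[0,r])^q}$ for the fixed-$\ell$ chaos, whose moments carry the kernel $Q$; then $\ell\to\infty$ after dividing by $\ell^{\gamma^2\binom q2}$ extracts the Dyson integral by dominated convergence again.

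Finally, the explicit evaluation — the last equality — is the Dyson/Selberg integral $\int_{[0,r]^q}|\Delta(\vec u)|^{-\gamma^2}\,d\vec u$, which by scaling equals $r^{q+\gamma^2\binom q2}$ times the unit-interval Selberg integral; its closed form $\prod_{j=0}^{q-1}\frac{\Gamma(1-(j+1)\gamma^2/2)\Gamma^2(1-j\gamma^2/2)}{\Gamma(1-\gamma^2/2)\Gamma(2-(q+j-1)\gamma^2/2)}$ is Selberg's 1944 evaluation with parameters specialized to $a=b=1$, $2\lambda = -\gamma^2$ (equivalently a Morris-type integral), and I would simply cite \cite{ForWar08}. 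The main obstacle, as usual in these moment arguments, is establishing the uniform-in-$N$ domination near the diagonals so that the pointwise convergence of the exponential-moment integrand upgrades to convergence of the integral — everything else is bookkeeping with the covariance formula \eqref{cov} and the hypotheses \eqref{c1}–\eqref{c2}, plus invoking \eqref{exp_moments_condition}.
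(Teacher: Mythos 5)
Your proposal follows essentially the same strategy the paper takes: expand the $q$-th moment by Fubini, use the strong Gaussian approximation \eqref{exp_moments_condition} (supplied for the CUE by Proposition~\ref{prop:cue}, where condition \eqref{c1} controls the smoothing error and \eqref{c2} controls the global error term involving $L(N)N^{-\alpha}\log(L(N)/\epsilon^*)$) to get pointwise convergence of the integrand after normalising by $L(N)^{\gamma^2\binom{q}{2}}$, then pass to the limit by dominated convergence, and finally invoke Selberg's evaluation. The identification of the $L(N)^{\gamma^2\binom{q}{2}}$ normalisation with the $\log\ell$ term in the covariance \eqref{cov}, and the observation that condition \eqref{c2} keeps the computation in the regime where the Gaussian asymptotic holds uniformly, are both in line with what the paper intends.

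The one place where you take a longer route than necessary is the domination step: the paper does not need a dyadic decomposition of $[0,r]^q$ or H\"older's inequality. Because the error in \eqref{exp_moments_condition} is uniform in $\u$, the pre-limit integrand is bounded by a constant multiple of $\exp(\gamma^2\sum_{j<k}T_{\epsilon_N,\epsilon_N}(u_j,u_k))$, and the covariance bound in assumption~\ref{kernel_asymp} (specifically \eqref{domination}, with the $\log L(N)$ contribution absorbed by the normalisation) gives directly the dominating function $C\prod_{j<k}\,1\vee|u_j-u_k|^{-\gamma^2}$, whose integrability under $\gamma^2 q<2$ is exactly the convergence criterion for the Selberg/Dyson integral; see the proof of Theorem~\ref{thm:L^2_phase} and the estimate \eqref{domination'}. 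Your multi-scale argument would also establish that integrability, but it re-derives a fact that Selberg's closed form already supplies, and the ``off-diagonal cross terms controlled by H\"older'' remark is not needed here — the Gaussian moment formula already yields a clean product over pairs. So this is not a gap, just an unnecessary complication relative to the paper's direct argument.
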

This theorem may be viewed as a smoothed analogue of Conjecture \ref{co:FKconj} and solves the case $q \in \mathbb{N}$ of the conjectures posed by the second author in \cite{Ost16}. In the latter paper, the double limit $N \to \infty$ followed by $\epsilon \to 0$ is referred to as the \textit{weak conjecture}, while the more fundamental single limit in the first equality is the \textit{strong conjecture}. Theorem \ref{th:moments} demonstrates the equivalence of the strong and weak limits for positive integer $q \in \mathbb{N}$. The full statement of these conjectures consist of assertions valid for any $q \in \mathbb{C}$; such generality is beyond the scope of this paper, though one expects Theorem \ref{th:moments} to hold for general $q \in \mathbb{C}$ with the appropriate analytic continuation of the Selberg integral as explained below, \textit{c.f.} equation (49) in \cite{Ost16}. 
The exponent of the factor $r^{q+\gamma^{2}\binom{q}{2}}$ in \eqref{largeint} is known as the \textit{structure exponent} and its quadratic dependence on $q$ are associated with the multi-fractal nature of the underlying random measures \cite{RV14}. It is natural  to ask what happens if we take a different interval in \eqref{indicator}; indeed in \cite{Ost16} the examples $[0,u]$ and $[-L(N),u]$ are studied in detail and the analogous results conjectured there for $q \in \mathbb{N}$ follow easily with our approach. In fact the second example $[-L(N),u]$ behaves similarly to our interval $[u-L(N)/2,u+L(N)/2]$ with $L(N) \to \infty$. However, it turns out that the random measures corresponding to these examples are \textit{not normalizable} in the usual GMC sense of \eqref{muintro}. We leave it as an interesting open problem to find a mathematically rigorous way of normalizing the GMC measures corresponding to these interval statistics.

The integral on the second line of \eqref{largeint} is a particular case of a multi-dimensional integral known as \textit{Selberg's integral}, due to its explicit evaluation by Selberg in 1944, see \cite{ForWar08} for a detailed historical review (similarly the integral in \eqref{FKconj} is referred to as the \textit{Dyson integral}). It is known that
the Dyson and Selberg integrals describe the moments of the total mass of the Bacry-Muzy Gaussian multiplicative chaos measure on the circle and the interval, respectively. Hence the problem of extending these integrals to meromorphic functions of $q\in\C$ in such a way that the resulting function is the Mellin transform of a probability distribution is of fundamental importance as the probability
distribution is then naturally conjectured to be that of the total mass. This problem was solved by Fyodorov and Bouchaud
\cite{FB08} for the Dyson integral, heuristically by Fyodorov \emph{et. al.} \cite{FlDR09} and independently and rigorously
by the second author \cite{Ost09,Ost13,Ost14} for the Selberg integral, see also \cite{Ost16b} for the Morris integral.
These analytic extensions are fundamental in the theory of log-correlated Gaussian processes \cite{FB08, FlDR09, FHK12,FK14,FS16,FlD16} as they lead to precise conjectures about the explicit form of the extreme value statistics, by an analogy with the so-called derivative martingale coming from branching processes. The connection between the derivative martingale and the distribution of the maximum has been rigorously proved for a certain class of log-correlated Gaussian fields \cite{DRZ15}. 

In view of our Theorems \ref{th:gmc} and \ref{th:moments}, it is reasonable to ask whether they tell us anything new about eigenvalue statistics of the CUE. It turns out that Theorem \ref{th:gmc} can be used to obtain a lower bound on the maximum of the process $W_{N}(u)$, \eqref{indicator}. 
 Namely, by first proving a lower bound for the maximum of the field $\overline{X}_{N,\epsilon_N}(u)$ where $\epsilon_N=N^{\alpha-1+\kappa}$ and $\kappa>0$ is small and then by taking care of removing the regularization, it can be shown that 
 \begin{equation} \label{max_LB_meso}
\lim_{N\to\infty}  \P{\max_{|u| \le C} W_N(u) \ge \frac{\ell}{2}N^{1-\alpha}+   (\sqrt{2}-\delta) \log N^{1-\alpha}}=1 ,
\end{equation} 
for any mesoscopic scale $0< \alpha <1$, any small $\delta>0$, and for any large $C>0$.   
Note that $\E{W_N(u)} = \frac{\ell}{2}N^{1-\alpha}$ and the constant $\sqrt{2}$ corresponds to the GMC critical value in dimension 1. 
In the global regime, our method produces the same bound with $\alpha=0$ and $C=\pi$, providing a result analogous to \cite[Theorem~1.2]{ABB16} about the leading order of the maximum of the imaginary part of the characteristic polynomial. In this case, it is known that the GMC critical value  describes the leading order of the maximum of the field in the sense that  for any $\delta>0$,
\begin{equation} \label{max_LB}
\lim_{N\to\infty}  \P{\max_{|u|\le\pi} W_N(u) \le  \frac{\ell}{2}N^{1-\alpha}+ (\sqrt{2}+\delta) \log N}=1 . 
\end{equation} 
In general, it is a difficult task to establish a lower bound for the extreme values of log--correlated fields. On the other hand, we will prove in a future publication that the bound \eqref{max_LB_meso} follows from the convergence \eqref{convdist} provided that it holds throughout the subcritical phase.
The main idea is simple and relies on the fact that the random measure $\mu_{N, \epsilon_N}^{\gamma,\mathrm{CUE}}$ {\it lives on the set of $\gamma$-thick points}:
 \begin{equation} \label{gamma_points}
\mathscr{T}_N^\gamma := \left\{  x\in \A : \overline{X}_{N,\epsilon_N}(x) \ge \gamma \E{ \overline{X}_{N,\epsilon_N}(x)^2 } \right\}  ,
\end{equation}
for large $N$, so that if its limit is non-trivial, there must exist (random) points where the field takes atypically large values. 
 It is an interesting question whether the convergence of the exponential measure in the critical case, $\gamma=\sqrt{2d}$, would yield some further information about extreme value statistics and whether it is possible to extend the validity of Theorem~\ref{th:gmc} to this case.

An interesting consequence of the lower-bound \eqref{max_LB_meso} concerning eigenvalue rigidity\footnote{By rigidity, we mean that the number of CUE eigenvalues contained in any (mesoscopic) arc $A$ on the unit circle concentrates near its mean $|A|\frac{N}{2\pi}$ with high probability when the dimension $N$ is large.} is that for any $\delta>0$ and scale $\alpha<1$, when $N$ is large there exists an arc $\mathrm{A}$ of size $N^{-\alpha}$ so that 
\begin{equation}
\#\{ \theta_j \in\mathrm{A}\}  \ge \E{\#\{ \theta_j \in\mathrm{A}\} } + (\sqrt{2}-\delta) \log\big(N|A| \big), \label{crowd}
\end{equation}
with high probability. That is the eigenvalues are over-crowded in this arc, since one typically expects the second term in \eqref{crowd} to be of order $\sqrt{\log N}$. We are planning in future work to further discuss the consequences of our results for extreme value statistics of log-correlated fields and eigenvalue rigidity in the random matrix context. In particular, for the sine process using the results of Section~\ref{sect:sine}.

\subsection{Strategy of the proof}

In order to prove Theorem~\ref{th:gmc}, we develop a general method for constructing multiplicative chaos measures in models where the log-correlated fields are only asymptotically Gaussian.
 In general, this requires asymptotics for the exponential moments of the field, which is much stronger than the usual approximation given by a CLT such as \eqref{CLT}.
The precise assumptions of our theory are rather technical and we present them in full detail in Section~\ref{sect:proof} together with the proof of Theorem~\ref{th:GMCgeneral} below. In this introduction, for the sake of transparency, we formulate our results in a simpler setting which is closer to that described in Section~\ref{sect:BR}, but remains rather general.

\begin{theorem} \label{th:GMCgeneral}
As in Theorem \ref{th:GMCintro}, let $G$ be a log--correlated Gaussian field with covariance \eqref{Gcov}, $\phi$ be a mollifier, and $G_{\epsilon} = G\,\ast\,\phi_{\epsilon}$ for any $\epsilon>0$.  
For each $N \in \mathbb{N}$, let $(X_{N,\epsilon}(u))_{u \in \A, \epsilon\in(0, 1]}$ be a centered random field on $\A \subset \R^{d}$ (not necessarily Gaussian).
Suppose that there exists a sequence $\delta_N \to 0$ as $N\to\infty$ so that for any $q\in\N$ and $\boldsymbol{\gamma}\in \R^q $, 
\begin{equation}  \label{qmoms}
\log\mathbb{E}\bigg[\mathrm{exp}\bigg( \sum_{j=1}^{q} \gamma_j X_{N,\epsilon_j}(u_{j})\bigg)\bigg] = \frac{1}{2}\sum_{j,k=1}^{q} \gamma_j\gamma_k\E{G_{\epsilon_j}(u_j) G_{\epsilon_k}(u_k)} + \underset{N\to\infty}{o(1)}
\end{equation}
uniformly for all $\u\in \A^{q}$ and $\boldsymbol{\epsilon}\in (\delta_N, 1]^q$. Then,  for any $0<\gamma<\sqrt{2d}$ and $w\in L^1(\A)$ uniformly bounded, the sequence of random variables
\begin{equation}
\mu^{\gamma}_{N}(w) := \int e^{\gamma X_{N,\delta_N}(u)-\frac{\gamma^{2}}{2}\E{ X_{N,\delta_N}(u)^2}}\,w(u)\,du 
\label{GMCgeneral}
\end{equation}
converges in distribution to the random variable $\nu^{\gamma}(w)$ where $\nu^{\gamma}$ is the GMC measure defined in Theorem \ref{th:GMCintro} associated with the field $G$.
\end{theorem}
Let us note that in the above, the field $X_{N,\epsilon}(u)$ need not be defined on the same probability space for different $N$. Theorem~\ref{th:GMCgeneral} establishes the convergence to a GMC measure in the whole subcritical phase $\gamma<\sqrt{2d}$.
 The proof is inspired by the elementary argument introduced by Berestycki \cite{Berestycki15} for establishing Theorem \ref{th:GMCintro}.
 Our main contributions are to single out the assumptions needed to apply these ideas in a setting where the fields are only asymptotically Gaussian and find a way to replace the Gaussian techniques (like, for instance,  the use of Girsanov's theorem) with other estimates using the strong asymptotics\footnote{In fact, the argument presented in Section~\ref{sect:proof} does not require that the asymptotics \eqref{qmoms} hold in full generality.} \eqref{qmoms}. 
 The strategy of the proof of Theorem~\ref{th:GMCgeneral} consists in constructing a (random) set $\mathfrak{S} \subset \mathfrak{A}$ which depends on the parameters $0<\gamma<\sqrt{2d}$ and $N\in\N$  in such a way that $\mu^{\gamma}_{N}(w\1_\mathfrak{S})$ is uniformly bounded in $L^2(\mathbb{P})$ and
$\lim_{N\to\infty} \E{\mu^{\gamma}_{N}(w\1_{\A\setminus\mathfrak{S}})} =0$. This second moment computation makes essential use of the uniformity of the error term in the asymptotics \eqref{qmoms}. These properties show that the sequence of random variables $\mu^{\gamma}_{N}(w)$ is tight and we identify its limit  by showing that
 $$
 \lim_{N\to\infty} \mu^{\gamma}_{N}(w) \overset{d}{=}  \lim_{\epsilon\to0} \lim_{N\to\infty} \mu^{\gamma}_{N, \epsilon}(w)  \overset{d}{=} \nu^\gamma(w) . 
  $$
 By $"\overset{d}{=}"$ we mean that these random variables have the same law. 
 This last idea is taken from the work of Webb, \cite{W15}, on the convergence of the characteristic polynomial of the CUE. However, Webb only established the convergence in the $L^2$-phase, $\gamma<\sqrt{d}$, by showing that in this regime,  $ \mu^{\gamma}_{N}(w)$ is uniformly bounded in $L^2(\mathbb{P})$ and is therefore uniformly integrable -- see Section~\ref{sect:L^2}. In the so--called 
 $L^1$-phase, $\sqrt{d}<\gamma < \sqrt{2d}$,
 $ \E{ \mu^{\gamma}_{N}(1)^2} \to\infty$ as $N\to\infty$ 
and, in effect, we need to restrict the random measure $\mu^{\gamma}_{N}$ to the set of points which are {\it not $\alpha$-thick}, see \eqref{gamma_points}, for some parameter $\alpha$ slightly bigger than $\gamma$, 
 to restore the uniform integrability property. This is the main idea to construct the set  $\mathfrak{S}$.  \\

 As we already mentioned, Theorem~\ref{th:gmc} follows from an application of Theorem~\ref{th:GMCgeneral} to the random field \eqref{smoothing} where $G$ is a stationary Gaussian field on $\R$ with covariance structure:
 \begin{equation}
\E{G(u) G(v))} = Q(u-v)= \log\frac{1}{|u-v|}+\log|\ell^{2}-(u-v)^{2}|^{1/2} ,  \label{cov}
\end{equation}
see formula \eqref{cov'}. For the CUE, since the random variables $X_{N,\epsilon}(u)$ are linear statistics, the strong Gaussian asymptotics \eqref{qmoms} can be checked by exploiting the connection between the CUE and Toeplitz or Fredholm determinants. In particular, we use the following remarkable formula.

\begin{theorem}[Borodin-Okounkov-Case-Geronimo formula]
Let $g$ be a function on the unit circle satisfying the regularity condition
\begin{equation}
\sigma^{2}(g) := \sum_{k=1}^{\infty}k|\hat{g}(k)|^{2} < \infty,\quad \text{where} \quad \hat{g}(k) = \frac{1}{2\pi}\int_{0}^{2\pi}g(e^{i\theta})e^{-ik\theta}\,d\theta.
\end{equation}
Then we have the exact formula
\begin{equation}
\mathbb{E}\bigg[\mathrm{exp}\bigg(\sum_{j=1}^{N}g(e^{i\theta_{j}})\bigg)\bigg] = \mathrm{exp}\left( N \hat{g}(0)+\frac{1}{2}\sigma^{2}(g)\right)\mathrm{det}\left(1+R_{N}V(g)V(g)^{*}R_{N}\right) \label{BOintro}
\end{equation}
where $V(g)$ is a certain Hilbert-Schmidt operator acting on $\ell^2(\mathbb{N},\mathbb{C})$ (see Section \ref{sect:BO} for full details) and $R_{N}$ is the orthogonal projection onto the subspace $\ell^{2}(\{0,\ldots,N-1\},\mathbb{C})$. 
\end{theorem}

For a comprehensive account of this formula, which originally appeared in \cite{GC79}, see the monograph of Simon \cite{Simon05}. The first factor on the RHS of formula \eqref{BOintro} corresponds to the Laplace transform of a Gaussian random variable with mean  $N \hat{g}(0)$ and variance $\sigma^{2}(g)$. The second factor is a Fredholm determinant of an operator acting on the sequence space $\ell^{2}(\mathbb{N},\mathbb{C})$. For fixed $g$, the operator $R_{N}V(g)V(g)^{*}R_{N}$ converges to zero in the trace norm which implies the Strong Szeg\H{o} limit theorem.
 
In the context of obtaining estimate \eqref{qmoms}, we have\footnote{Here $X^{(2\pi)}_{N,\epsilon}$ is an appropriate periodization of $X_{N,\epsilon}$ which does not change the Gaussian asymptotics \eqref{qmoms}, see Section \ref{sect:BO}} 
 $$
  \sum_{j=1}^{q} \gamma_j X^{(2\pi)}_{N,\epsilon_j}(u_{j}) = \sum_{j=1}^{N}g_N(e^{i\theta_{j}}) 
 $$
where $g_N$ is a smooth function on the unit circle varying with $N$.  Hence, in order to obtain the asymptotics \eqref{qmoms}, the main challenge is to show that, uniformly in the various parameters, 
 $\lim_{N\to\infty} \det(1+R_{N}V(g_N)V(g_N)^{*}R_{N}\big)=1$. This is precisely where the condition \eqref{c1} comes into play. Namely, there is a transition in the asymptotics when the regularization scale~$\epsilon_N$ becomes of the order of the mean eigenvalue spacing $N^{\alpha-1}$. When $\epsilon_{N} = O(N^{\alpha-1})$, the Fredholm determinant on the RHS of formula \eqref{BOintro} does not converge to $1$ and we enter the so-called Fisher--Hartwig regime characterized by the case $\epsilon_{N} \equiv 0$. In fact, if we consider the counting statistics \eqref{indicator} and set $\overline{W}_N= W_N - \mathbb{E}W_N$, we deduce from \cite[Theorem 2.2]{Kra11} that for any $q\in\N$ and $\boldsymbol{\gamma}\in (-1,\infty]^q$, 
	\begin{align}  \label{kraasympt}
\mathbb{E}\bigg[\mathrm{exp}\bigg(\sum_{j=1}^{q}\gamma_{j} &\overline{W}_N(u_j) \bigg) \bigg]= \mathrm{exp}\left(\frac{1}{2}\sum_{j=1}^{q}\gamma_{j}^{2}\log N-\sum_{j < k}\gamma_{j}\gamma_{k}\log|e^{iu_k}-e^{iu_j}|\right.& \\
	&\left.+\sum_{j < k}\frac{\gamma_{j}\gamma_{k}}{2}\log\left(|e^{i(u_k-\ell)}-e^{iu_j}||e^{i(u_k+\ell)}-e^{iu_j}|\right)\right)\prod_{j=1}^{q}\left|G(1+\frac{\gamma_{j}}{2i})\right|^{4}(1+o(1)) , \notag
	\end{align}  	
for fixed parameters $0< u_1<\cdots < u_q$ (the error term is no longer uniform). For the proof of Theorem~\ref{th:GMCgeneral}, it is crucial that the asymptotics \eqref{qmoms} are uniform when the points $u_j$ merge. When $q=2$ this merging has been studied by Claeys and Krasovsky \cite{CK15} and very precise asymptotics are known in the various regimes. 

However, for $q>2$, there are no results about merging singularities and this is known to be a complicated problem. This lack of uniformity is the main obstacle in establishing the analogues of Theorems \ref{th:gmc} and~\ref{th:moments} without the regularization procedure ($\epsilon_N =0$). 
Therefore, the condition \eqref{c1}  simplifies the asymptotics by assuring that the regularized field remains in a Gaussian regime and prevents the technical complications due to the emergence of Fisher--Hartwig singularities.  
Nevertheless, as \eqref{max_LB_meso} shows, such a condition still allows us to recover a sharp lower bound for the leading order behavior of the maximum of the field.\\

Finally, let us remark that the Barnes G-functions in \eqref{kraasympt} seem to indicate non-Gaussianity of the process $\overline{W}_N$ beyond the leading order for large $N$. However, we expect that for a fixed $\gamma$ the normalization used by Webb \cite{W15} $\frac{e^{\gamma \overline{W}_N(u)}}{\mathbb{E} (e^{\gamma\overline{W}_N(u))})}$ and the usual normalization
$e^{\gamma \overline{W}_N(u) - \frac{\gamma^2}{2}\mathbb{E}(\overline{W}_N(u)^2)}$ converge to the same limiting random variable up to a constant factor depending only on $\gamma$.

\subsection{Results for the sine process} \label{sect:sine}

In this subsection, we explain how to apply Theorem \ref{th:GMCgeneral} to regularized counting statistics of the sine process. The results are analogous to those stated for the CUE in Section \ref{sect:BR}. Hence, we expect that similar results hold for well-behaved unitary invariant ensembles as well, but we leave the task of proving the sufficient asymptotics open for a future project. The sine process, denoted by $\Lambda_N$, is the determinantal point process on $\R$ with correlation kernel 
  \begin{equation}\label{sine_kernel}
K_N(u,v) = \frac{\sin\big( \pi N (u-v)\big)}{\pi(u-v)}  .
\end{equation}
We refer to Section~\ref{sect:DPP} below for some background on determinantal processes.
This is a translation invariant point process whose density is $N$ times the Lebesgue measure on~$\R$.  
Recall that, given a mollifier $\phi$, we denote $\phi_\epsilon(x)=\epsilon^{-1}\phi(x/\epsilon)$ and $\J_{u}(x) = \pi \1_{|x-u| \le \ell/2}$ for all $u, x\in\R$.
Let us consider the linear statistics:
\begin{equation} \label{X_field}
X_{N,\epsilon}(u) = \sum_{\lambda\in \Lambda_1} \J_u*\phi_\epsilon(\lambda/N) . 
\end{equation}
As $N\to\infty$, the random variable \eqref{X_field} gives an approximation of the number of eigenvalues in a mesoscopic box in the bulk of the GUE, or say of another unitary invariant ensemble. To see the parallel with the CUE, note that the scaling property of the sine kernel implies that  
\begin{equation} \label{X_scaling}
X_{N,\epsilon}(u) \overset{d}{=} \sum_{\lambda\in \Lambda_N} \J_u*\phi_\epsilon(\lambda) . 
\end{equation}
      
We will focus on the strong regime where $\epsilon= \epsilon_N \to 0$  as $N\to\infty$
 and, viewing $X_{N,\epsilon}$ as an asymptotically  Gaussian field on $\R$, we will construct its chaos measure in the subcritical phase. 
The advantage of working with the random variables \eqref{X_field} instead of the RHS of \eqref{X_scaling} is that it introduces a natural coupling which allows us to obtain a stronger mode of convergence than for the CUE. 
For technical reasons, we will restrict ourselves to the following class of real-analytic mollifiers.
 
 \begin{assumption} \label{phi_assumption}
Suppose that the function $\phi$ is analytic in $| \Im z | <c $ and that $|\Im \phi| < \pi/\ell$ in this strip. We also assume that $\phi \ge 0$ on $\R$, $\displaystyle \int_\R \phi(x) dx =1$,  and  $\sup\big\{  \| \phi\|_{L^1(\R+i s)}  : s \le c/2\big\} = C_\phi <\infty$.   
 \end{assumption}

For any $\gamma>0$, we now consider the random measure:
\begin{equation} \label{X_measure}
\mu_{N,\epsilon}^{\gamma,\mathrm{Sine}}(w) = \int_{\mathbb{R}}e^{\gamma \overline{X}_{N,\epsilon}(u) - \frac{\gamma^{2}}{2}\mathbb{E}((\overline{X}_{N,\epsilon})^{2})}w(u)\,du,
\end{equation}
where, as for the CUE, $\overline{X}_{N,\epsilon} := X_{N,\epsilon}-\mathbb{E}X_{N,\epsilon}$. We also define a space of mollifiers that will enter into the regularized field \eqref{X_scaling}: For any $\alpha\ge 0$, let
\begin{equation} \label{D}
\mathcal{D}_\alpha = \left\{ \phi \in L^1\cap L^2(\R)  : \phi \ge 0 , \int_\R \phi(x) dx =1 \text{ and }   \int_\R |x|^\alpha\phi(x) dx < \infty  \right\}
\end{equation}
and 
$\mathcal{D} = \bigcup_{\alpha>0} \mathcal{D}_\alpha.$ For instance, the functions $\phi(z) = e^{-z^{2}/2}/\sqrt{2\pi}$ or $\phi(z) = \frac{1/\pi}{1+z^2}$ satisfy Assumption~\ref{phi_assumption} and belong to the set $\mathcal{D}$. 
Finally recall that $G$ is the stationary Gaussian field on $\R$ with zero mean  and covariance kernel \eqref{cov}. As in the case of the CUE, we let $\nu^\gamma$ be the GMC measure associated to the field $G$. 

\begin{theorem} \label{thm:sine_GMC}
Let $w\in L^1\cap L^{\infty}(\R)$, $\phi\in\mathcal{D}$ be a function which satisfies Assumption~\ref{phi_assumption}, and let $\epsilon_N$ be a sequence which converges to 0 as $N\to\infty$ in such a way that $\epsilon_N \ge N^{-1}(\log N)^{1+\kappa}$ for some $\kappa>0$.   For any $0<\gamma<\sqrt{2}$, $\mu_{N,\epsilon_N}^{\gamma,\mathrm{Sine}}(w)$ converges in $L^1(\mathbb{P})$ as $N\to \infty$ to a random variable $\mu^\gamma(w)$ which has the same law as $\nu^\gamma(w)$, where $\nu^\gamma$ is the GMC measure defined in Theorem \ref{th:GMCintro} and $g(u,v) = \log|\ell^{2}-(u-v)^{2}|^{1/2}$. 
\end{theorem}

This shows that, in the subcritical phase, the law of the random measure $\mu^\gamma$ does not depend on the mollifier $\phi$ and it is the same GMC measure as for the CUE and as for Theorem \ref{th:GMCintro}. The proof of Theorem~\ref{thm:sine_GMC} is given at the end of Section~\ref{sect:L^1} and is a direct consequence of our general result, Theorem~\ref{thm:L^1_phase}. 
The main assumption to obtain Theorem~\ref{thm:sine_GMC} is again the strong Gaussian approximation \eqref{qmoms}. The needed exponential moments of \eqref{X_scaling} can be expressed in terms of a Fredholm determinant; see formula \eqref{Fredholm_det} below. In Section~\ref{sect:DPP}, we explain how the asymptotics of this Fredholm determinant can be related to a $2\times 2$ Riemann--Hilbert problem. Then, the condition $\epsilon_N \ge N^{-1}(\log N)^{1+\kappa}$ guarantees that we can straightforwardly apply the Deift--Zhou steepest descent method to this problem.

\begin{assumption}\label{h_assumption}
Let $0<\alpha<\pi$. Suppose that $h$ is a function which is analytic and satisfies  
\begin{equation*}
 |\Im h(z)| <  \alpha ,
 \end{equation*}
 in a strip $| \Im z | <\delta$.
We also assume that $h:\R \to\R$, that $h'\in L^1(\R)$, and that the following constants are finite:
\begin{equation} \label{constants_1}
C_\infty = \sup\big\{  \exp | h(z)| : | \Im z | \le\delta/2\big\}\
\text{ and }\
 C_1=   \sup\big\{ \| h\|_{L^\infty(\R+i s)} \vee  \| h\|_{L^1(\R+i s)}  : s \le\delta/2\big\} .  
\end{equation}
\end{assumption}

 We need to work with real-analytic mollifiers in order to apply the Riemann-Hilbert machinery. In principle, it could be possible to work with a more general class of mollifiers by using an argument analogous to the one in \cite{BerggrenD16}. It is based on constructing an $N$-dependent approximation $\phi^{(N)}$ of the mollifier $\phi$ which is real-analytic so that we can solve the Riemann-Hilbert problem for $\phi^{(N)}$, c.f.~Lemma~\ref{thm:RHP}, and argue that the Laplace transform of the two regularizations are sufficiently close as $N\to\infty$.

  \begin{proposition} \label{thm:sine_asymp}
If $h$ is a test function which satisfies Assumption~\ref{h_assumption}, then
\begin{equation}\label{Laplace_asymp}
\log \E{\exp\left( \sum_{\lambda\in\Lambda_N} h(\lambda)  \right)  }
  =    N \int  h(x) dx 
+\frac{1}{2}\| h \|_{H^{1/2}(\R)}^2
  +   \underset{N\to\infty}{O}\left(  \frac{C_\infty^6 C_1^2}{\delta^{3/2}|\sin\alpha|}  e^{-\pi \delta N} \right) .
  \end{equation}
	where $\| h \|_{H^{1/2}(\R)}^2$ is defined in Appendix \ref{appendix}, equation \eqref{IP_1}. The implied constant in the big-O term of \eqref{Laplace_asymp} does not depend on $h$, $\delta$ or $N$.
  \end{proposition} 
  The proof of Proposition~\ref{thm:sine_asymp} is given in Section~\ref{sect:RHP}. 
 
This is a classical result, \cite{Deift99}, but we need to take extra care to control the error term uniformly, especially because we will consider $N$-dependent test functions. 

Specifically, we can consider any regime where
$\delta(N) \to 0$ as $N\to\infty$ almost as fast as $N^{-1}$ (i.e.~up to the critical regime where the Gaussian approximation fails). 
In fact, our asymptotics are sufficiently strong to  strengthen the convergence of the measure $\mu_{N,\epsilon}^{\gamma,\mathrm{Sine}}$ when the parameter $\gamma$ is sufficiently small. In particular,  motivated by the conjectures of \cite{Ost16}, beyond the $L^2$-phase, we  establish the convergence of all the existing moments of the multiplicative chaos measure~$\mu_{N,\epsilon}^{\gamma,\mathrm{Sine}}$.

\begin{theorem} \label{thm:sine_moments}
Under the same assumptions as Theorem~\ref{thm:sine_GMC}, 
if $q$ is an even integer and $\gamma \ge 0$ so that $q\gamma^2<2$, then
 the random variable $\mu_{N,\epsilon}^\gamma(w)$ converges in $L^q(\mathbb{P})$ to $\mu^\gamma(w)$. Moreover, for any $q\in\N$ and  $\gamma \ge 0$ such that $q\gamma^2 <2$, we have
\begin{equation} \label{cvg_moments}
\lim_{N\to\infty}\E{\mu_{N,\epsilon}^{\gamma,\mathrm{Sine}}(w)^q} 
=  \int_{\R^q} \exp\Bigg(\gamma^2\hspace{-.15cm}\sum_{1\le j<k\le q}\hspace{-.15cm} Q(u_j-u_k) \Bigg)\prod_{k=1}^q w(u_k) du_k .
\end{equation}
where the correlation kernel $Q$ is given by formula \eqref{cov}.
\end{theorem}

The proofs of Theorem~\ref{thm:sine_GMC} and Theorem~\ref{thm:sine_moments} are given at the end of Sections~\ref{sect:L^1} and~\ref{sect:L^2} respectively.
Finally, let us mention that the main challenge to extend Theorem~\ref{th:gmc} beyond the CUE or sine process boils down to obtaining the strong Gaussian asymptotics \eqref{qmoms}. This is an interesting problem, also of independent interest, for both Hermitian unitary invariant or Wigner ensembles.

\subsection{Overview of the paper}

The paper is organized as follows. In Section~\ref{sect:DPP}, we begin with a brief review of determinantal point processes associated with integrable operators, of which the CUE and sine process are special cases. In Section~\ref{sect:L^2} and~\ref{sect:L^1}, we develop the multiplicative chaos theory for random fields which satisfy the strong Gaussian approximation \eqref{qmoms} and prove Theorem \ref{th:GMCgeneral}. In Section~\ref{sect:covariance}, we analyze the covariance structure of the regularized field $G_{\epsilon}(u)$ arising from the $H^{1/2}$ noise \eqref{Sos}, proving some preparatory results to apply the general theory to the CUE and sine process. In Sections~\ref{sect:RHP} and~\ref{sect:BO}, we establish the required asymptotics \eqref{qmoms} for the sine and CUE point processes, using a Riemann-Hilbert problem and formula \eqref{BOintro}, respectively. Finally, in the appendix (section~\ref{sect:GMC}), we provide a review of some of the recent developments in the theory of Gaussian multiplicative chaos of relevance to the present article. \\

In what follows, $C>0$ denotes a numerical constant which may change from line to line and we use the notation $a \ll  b$ to specify that the quantity $a \le Cb$. 
We also define for all $x\in\R$, 
\begin{equation}
\log^+(x)= \log\big(1 \vee |x|), \label{logplus}
\end{equation}
where throughout the article we use the notation $x\wedge y := \mathrm{min}\{x,y\}$ and $x \vee y := \mathrm{max}\{x,y\}$.
\subsection{Determinantal point processes and integrable operators} \label{sect:DPP}

The aim of this section is to provide,  in a general context,  a short introduction to the theory of determinantal point processes which focuses on the connection between linear statistics  and Fredholm determinants. We also briefly review the concept of integrable operators introduced in~\cite{IIKS} and
how this relates the Laplace transform of a linear statistic to a 
 Riemann-Hilbert problem.  \\

Let $\Sigma$ be a Polish space equipped with a Radon measure $\eta$.  A point configuration $\Upsilon\subset \Sigma$ is a discrete set  which is locally finite (i.e.~the set $\Upsilon \cap B$ is finite for any compact set $B\subset\Sigma$). A point process is a probability measure on the space of point configurations. This definition can be made mathematically precise, see for instance \cite{S00b, Johansson06, Borodin11}, and a point process can be described by its intensity measures or correlation functions $\{\rho_n\}_{n=1}^\infty$ which are defined by the formulae:
\begin{equation} \label{correlation_1}
\E{\sum_{ (\lambda_1,\dots, \lambda_n ) \subset \Upsilon} \prod_{k=1}^n f_k(\lambda_k)  } = \int_{\Sigma^n} \prod_{k=1}^n f_k(x_k)  \rho_n(dx_1,\dots, dx_n)  ,
\end{equation}
for any functions $f_1,\dots, f_n \in L^\infty(\Sigma\to \R_+)$ with compact support.  Note that the LHS of formula \eqref{correlation_1} consists of a sum over all ordered subsets of the random configuration $\Upsilon$ of size $n\in\N$.   
A point  process is  called determinantal if all its intensity measures are of the form
\begin{equation}\label{correlation_2}
 \rho_n(dx_1,\dots, dx_n)  = \det_{n\times n}[K(x_i,x_j)] \eta(dx_1)\cdots \eta(dx_n).
\end{equation}
The function $K :\Sigma\times \Sigma \to \C $ is called the correlation kernel. It is obviously not unique, but it encodes the law of the random configuration $\Upsilon$. There are many interesting examples of determinantal processes coming from probability theory, combinatorics, and mathematical physics such as the eigenvalues of unitary invariant random matrices, free fermions, zeros of Gaussian analytic functions, non-intersecting random walks, uniform spanning trees, random tilings, etc.
We refer to the surveys \cite{Johansson06, HKPV06, Borodin11} for further examples. 
Let us just mention the following criterion which goes back to the beginning of the theory, \cite{Macchi75},  and describes a natural class of correlation kernels. 

\begin{theorem}[Macchi \cite{Macchi75}, Soshnikov \cite{S00b}]
\label{thm:Macchi}
If a kernel $K$ determines a self-adjoint integral operator acting on $L^2(\Sigma)$ which is locally trace-class, then $K$ defines a determinantal point process if and only if its spectrum is contained in $[0,1]$.
\end{theorem}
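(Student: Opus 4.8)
The plan is to prove the two implications separately: necessity (if $K$ is the correlation kernel of a determinantal process, then $\mathrm{Spec}(K)\subseteq[0,1]$) by a short generating‑function argument, and sufficiency (the converse) by an explicit construction of the process. The self‑adjointness hypothesis is used throughout so that the relevant compressions of $K$ have real spectrum and an orthonormal eigenbasis.

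For necessity I would fix a compact $B\subseteq\Sigma$, write $K_B=\1_BK\1_B$ for the associated self‑adjoint trace‑class compression with eigenvalues $(\lambda_i)$, and set $N_B=\#(\Upsilon\cap B)$, which is almost surely finite because $\Upsilon$ is locally finite. The key identity comes from expanding the multiplicative functional: for $z\in[0,1]$, writing $z^{N_B}=\prod_{x\in\Upsilon}\big(1-(1-z)\1_B(x)\big)$ and inserting \eqref{correlation_2} into the Fredholm series gives $\mathbb{E}[z^{N_B}]=\det\big(1-(1-z)K_B\big)=\prod_i\big(1-\lambda_i+\lambda_iz\big)$, the last step by the spectral theorem for $K_B$. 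Now $\mathbb{E}[z^{N_B}]=\sum_n\mathbb{P}[N_B=n]\,z^n$ has non‑negative coefficients summing to $1$, so it cannot vanish for any $z>0$; but if some $\lambda_{i_0}\notin[0,1]$ then $c_{i_0}:=1-\lambda_{i_0}^{-1}>0$ and the corresponding factor $1-\lambda_{i_0}+\lambda_{i_0}c_{i_0}$ vanishes, forcing $\mathbb{E}[c_{i_0}^{N_B}]=0$, a contradiction. Hence $\mathrm{Spec}(K_B)\subseteq[0,1]$ for every compact $B$, which is precisely $\mathrm{Spec}(K)\subseteq[0,1]$.

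For sufficiency I would build the process in three stages. First, if $P$ is the orthogonal projection onto a finite‑dimensional subspace of $L^2(\Sigma,\eta)$ with orthonormal basis $\phi_1,\dots,\phi_n$, the formulae $\rho_m(x_1,\dots,x_m)=\det[P(x_i,x_j)]$ with $P(x,y)=\sum_k\phi_k(x)\overline{\phi_k(y)}$ are the correlation functions of a genuine point process with exactly $n$ points, of joint density $\tfrac{1}{n!}\big|\det[\phi_j(x_i)]_{i,j}\big|^2$ with respect to $\eta^{\otimes n}$; this case is classical. Second, for a compact $B$ I would diagonalize $K_B=\sum_k\lambda_k\,\phi_k\otimes\phi_k$ with $\lambda_k\in[0,1]$ and $\tr K_B=\sum_k\lambda_k<\infty$, take independent $\xi_k\sim\mathrm{Bernoulli}(\lambda_k)$ (so that $P_\xi:=\sum_k\xi_k\,\phi_k\otimes\phi_k$ is almost surely a finite‑rank orthogonal projection), and sample the projection process with kernel $P_\xi$ conditionally on $\xi$; a Cauchy--Binet computation together with independence shows that the correlation functions of this mixture are $\mathbb{E}_\xi\det[P_\xi(x_i,x_j)]=\det[K_B(x_i,x_j)]$, so the mixture is the desired determinantal process on $B$. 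Third, I would exhaust $\Sigma$ by increasing compacts $B_1\subseteq B_2\subseteq\cdots$, observe that restricting a configuration from $B_{m+1}$ to $B_m$ turns $\det[K_{B_{m+1}}(x_i,x_j)]$ into $\det[K_{B_m}(x_i,x_j)]$ because $B_m\subseteq B_{m+1}$, and invoke the Kolmogorov extension theorem to produce a point process on $\Sigma$ with correlation functions $\det[K(x_i,x_j)]$.

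I expect the analytic ingredients (the Fredholm determinant identities, the spectral theorem for trace‑class operators, Cauchy--Binet) to be routine, and the real obstacle to lie in the point‑process bookkeeping of the sufficiency argument: one must check that each $\rho_m$ above genuinely defines a point process in the sense of \eqref{correlation_1} rather than merely a consistent family of measures, that a determinantal process is uniquely pinned down by its kernel so that ``restriction'' is meaningful at the level of processes, and that the extension respects the topology on the configuration space. The necessity direction, by contrast, is comparatively soft once the identity $\mathbb{E}[z^{N_B}]=\det(1-(1-z)K_B)$ is in hand. For the complete treatment of these measure‑theoretic points I would refer to \cite{Macchi75, S00b}.
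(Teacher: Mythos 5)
The paper does not actually prove this theorem; it is stated as a citation to Macchi \cite{Macchi75} and Soshnikov \cite{S00b} and used as a black box, so there is no ``paper proof'' against which to compare your argument. What you have written is a sketch of the standard proof found in those references and in the Hough--Krishnapur--Peres--Vir\'ag survey. Your sufficiency argument (diagonalize $K_B$, form the Bernoulli mixture $P_\xi=\sum_k\xi_k\,\phi_k\otimes\phi_k$, use Cauchy--Binet plus independence to get $\mathbb{E}_\xi\det[P_\xi(x_i,x_j)]=\det[K_B(x_i,x_j)]$, then exhaust $\Sigma$ by compacts) is the right construction, and your closing remarks correctly flag where the real work lies, namely in checking that the correlation functions $\rho_n=\det[K]$ determine the process uniquely before invoking consistency.

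There is, however, a genuine gap in your necessity argument for eigenvalues $\lambda_{i_0}<0$. You deduce $\mathrm{Spec}(K_B)\subseteq[0,1]$ from the identity $\mathbb{E}[z^{N_B}]=\det\big(I-(1-z)K_B\big)=\prod_i(1-\lambda_i+\lambda_iz)$ evaluated at $z=c_{i_0}=1-\lambda_{i_0}^{-1}$. When $\lambda_{i_0}>1$ one has $c_{i_0}\in(0,1)$ and the argument is fine: $\mathbb{E}[c_{i_0}^{N_B}]\le1$ is finite, the expansion $z^{N_B}=\prod_{x\in\Upsilon}(1-(1-z)\1_B(x))$ is legitimate, and strict positivity gives the contradiction. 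But when $\lambda_{i_0}<0$ one has $c_{i_0}>1$, and then the very identity you are invoking is not available: to interchange $\mathbb{E}$ with the Fredholm series for $z>1$ you need $\mathbb{E}[z^{N_B}]<\infty$, and the standard justification via the Hadamard bound $\rho_n(x_1,\dots,x_n)\le\prod_iK(x_i,x_i)$ already presupposes that $K$ is positive semi-definite as a kernel, which is precisely the $\lambda\ge0$ half of what you are trying to prove. So your argument is circular in that case; the Fredholm determinant on the right is entire in $z$, but nothing forces it to coincide with the possibly divergent power series on the left beyond $z=1$. The usual fix is to prove $K\ge0$ first and separately: the non-negativity of $\rho_n=\det[K(x_i,x_j)]$ for all $n$ and $\eta^{\otimes n}$-a.e.~$(x_1,\dots,x_n)$, combined with the assumed continuity of $K$, says exactly that $K$ is a positive semi-definite kernel, hence $K\ge0$ as an operator; only then does the generating-function identity hold on $[0,\infty)$ and deliver $K\le I$ via your zero-at-$c_{i_0}$ argument restricted to $c_{i_0}\in(0,1)$.
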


In the following, we shall assume that the kernel $K$ is a continuous function  on $\Sigma\times\Sigma$ and satisfies the hypothesis of Theorem~\ref{thm:Macchi}. 
In this case, this kernel defines an operator, also denoted by $K$, which is locally trace-class if and only if for any compact set $B \subseteq \Sigma$,
$$
\tr K =  
\int_B K(x,x)dx <\infty ;
$$
 see \cite[Theorem~2.12]{Simon05}. 
We let $\Upsilon$ be the point configuration of the determinantal process with kernel $K$ and for any function $\varphi \in L^\infty(\Sigma \to \R_+)$, we denote
\begin{equation} \label{kernel}
K_\varphi(x, x') = \sqrt{\varphi(x)} K(x,x') \sqrt{\varphi(x')} . 
\end{equation}
The condition that the function $\varphi \ge 0$ is not necessary but rather convenient. 
In particular, this implies that the operator $K_\varphi$ is also self-adjoint, non-negative, and it is trace class if
\begin{equation} \label{expectation}
\tr K_\varphi = 
\int_{\Sigma} K(x,x) \varphi(x)dx
= \E{\sum_{\lambda\in\Upsilon} \varphi(\lambda)}  <\infty .
\end{equation}
Note that this condition holds if for instance $ \varphi$ has compact support. 
The last equality in \eqref{expectation} follows from the definition of the first intensity measure and the function $x\mapsto K(x,x)$ is called the density of the point process $\Upsilon$. 
The reason to consider the kernel \eqref{kernel} is that, using formulae \eqref{correlation_1} and \eqref{correlation_2}, it is a simple combinatorial exercise to show that if $K_\varphi$ is trace-class, then  for any $t\ge 0$, 
\begin{equation} \label{Fredholm_det}
\E{\prod_{\lambda\in\Upsilon}\big(1+t  \varphi(\lambda)\big)} = \det[\Id +t K_\varphi]_{L^2(\Sigma)} ,
\end{equation}
where the RHS is a Fredholm determinant; c.f.~\cite{Johansson06}. 
In particular, taking the usual logarithm, we obtain 
\begin{equation} \label{trace}
\log \E{\prod_{\lambda\in\Upsilon}\big(1 + t\varphi(\lambda)\big)} = \tr \log(\Id + t K_\varphi) ,
\end{equation}
and this function is differentiable for all $t>0$:
$$ \frac{d}{dt}  \tr \log(\Id + t K_\varphi) = \tr \left[ \frac{K_\varphi}{\Id+ tK_\varphi} \right] . $$ 
Hence, if we define $ \Res_{t}  := \frac{K_\varphi}{1+tK_\varphi}$, this implies that
\begin{equation} \label{resolvent}
\log \E{\prod_{\lambda\in\Upsilon}\big(1+\varphi(\lambda)\big)} 
= \int_0^1 \tr[\Res_t ] dt . 
 \end{equation} 

For instance, taking $\varphi(x) = \1_{x\in B}$ for some compact subset $B \subseteq \Sigma$, one can investigate the distribution of
the random variable $|\Upsilon\cap B|$ and in particular the probability that there are no points in the set $B$. 
More generally,  if $h \in L^\infty( \Sigma \to \R_+)$, taking $\varphi(x) =  e^{ h(x)} -1$, this gives an explicit formula for the exponential moments or Laplace transform of the linear statistic $\sum_{\lambda\in\Lambda} h(\lambda)$. 
As the density of the point process  converges to infinity, this reduces the question about the statistical properties of the random variable $\sum_{\lambda\in\Upsilon} h(\lambda)$ to a question about the 
 asymptotics of the resolvent operator $\Res_t$.
There is a special class of determinantal processes, those for which the correlation kernel gives rise to an integrable operator, which are particularly interesting because computing the resolvent $\Res_t$ turns out to be equivalent to solving a Riemann-Hilbert problem; see Proposition~\ref{thm:Deift} below. In particular, it allows to use the so-called {\it Deift-Zhou steepest descent} method introduced in \cite{DZ93} to obtain the asymptotics of formula~\eqref{resolvent}. 
The theory of integrable operators and the auxiliary Riemann-Hilbert problem originates in the context of statistical field theory, \cite{IIKS}, but this approach has also been used to answer different types of questions about the statistics of eigenvalues of unitary invariant matrix ensembles. For instance, one can find a proof of the Strong Szeg\H{o} limit theorem in \cite{Deift99} and, in \cite{BerggrenD16}, the authors extended Deift's method  to  investigate a transition for smooth mesoscopic statistics of the so-called thinned CUE and thinned sine process. Mesoscopic statistics were also studied using a Riemann-Hilbert problem in \cite{FKS16}.

 In this paper, we will use an analogous method to derive the necessary estimates to construct a multiplicative chaos measure which arises naturally from the sine process. In particular, we will make use of the following result from \cite{IIKS}, see also \cite{Deift99}. 
 
\begin{theorem}\label{thm:Deift}
Suppose that $\Sigma$ is a closed (oriented) curve on the Riemann sphere. 
Let $\Upsilon$ be a determinantal process on $\Sigma$ with Hermitian correlation kernel of the form
\begin{equation}\label{integrable_kernel}
K(z,z') =- \frac{\f(z)^*\g(z')}{\pii (z-z')} , 
\end{equation}
where $\f : \Sigma\to \C^k $ and $\g : \Sigma\to \C^k $ are continuously differentiable functions so that $\f(z)^*\g(z)=0$ for all $z\in\Sigma$. 
If $\varphi: L^\infty(\Sigma\to \R)$ is a test function so that both $\sqrt{\varphi}\f,  \sqrt{\varphi}\g \in L^2(\Sigma)$, then we have
\begin{equation}\label{Laplace_T}
\log \E{ \prod_{\lambda\in\Upsilon}\big(1+ \varphi(\lambda) \big)  }
= \frac{-1}{\pii} \int_0^1 \int_\R \left(\frac{d\sqrt{\varphi}\mathrm{F}_t}{dx}  (x)\right)^* \left(\sqrt{\varphi(x)} \mathrm{G}_t(x)\right) dx dt
\end{equation}
where $\mathrm{F}_t= m_+\f $ and $\mathrm{G}_t = (m_+^{-1})^* \g$ and the matrix $m$ is the (unique) solution of the Riemann-Hilbert problem:
\begin{itemize}
	\item $m(z) $ is analytic on $\mathbb{C}\backslash\Sigma$.
	\item If we let $v = \Id + t \varphi \f \g^*$, then $m(z)$ satisfies the jump condition:
\begin{equation}  \label{RHP_0}
m_{+}(z) = m_{-}(z)v(z), \qquad z \in \Sigma .
\end{equation}
\vspace{-.7cm}
	\item $m(z) \to \Id$ as $z \to \infty$ in $\mathbb{C}\backslash\Sigma$. 
\end{itemize}
\end{theorem}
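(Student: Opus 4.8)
The plan is to combine the resolvent identity \eqref{resolvent} with the classical Its--Izergin--Korepin--Slavnov (IIKS) description of the resolvent of an integrable operator. First I would check that, under the stated hypotheses, $K_\varphi$ is trace-class on $L^2(\Sigma)$: the constraint $\f^*\g\equiv 0$ forces the numerator in \eqref{integrable_kernel} to vanish on the diagonal, so that both $K$ and $K_\varphi$ have kernels extending continuously across the diagonal, and the $C^1$ assumption on $\f,\g$ together with the compactness of $\Sigma$ on the Riemann sphere then yields trace-class. Moreover, for $t\in[0,1]$ the operator $\Id+tK_\varphi$ is invertible whenever $1+t\varphi>0$ pointwise, since in that case its Fredholm determinant equals $\E{\prod_{\lambda\in\Upsilon}(1+t\varphi(\lambda))}>0$ by \eqref{Fredholm_det}. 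Applying \eqref{resolvent} with $\varphi$ replaced by $t\varphi$ therefore gives
$$\log\E{\prod_{\lambda\in\Upsilon}(1+\varphi(\lambda))} = \int_0^1 \tr[\Res_t]\,dt,\qquad \Res_t = \frac{K_\varphi}{1+tK_\varphi},$$
and everything reduces to identifying the kernel of $\Res_t$ and computing its trace.

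Second, I would observe that $tK_\varphi$ is an integrable operator with defining data $\sqrt{\varphi}\,\f$, $\sqrt{\varphi}\,\g$ and scalar weight $t\varphi$, the orthogonality relation $(\sqrt{\varphi}\,\f)^*(\sqrt{\varphi}\,\g)=\varphi\,\f^*\g\equiv 0$ being inherited. By the IIKS theory (see \cite{IIKS, Deift99}), invertibility of $\Id+tK_\varphi$ is equivalent to the solvability of the Riemann--Hilbert problem in the statement with jump $v=\Id+t\varphi\,\f\g^*$; uniqueness of $m$ comes from the standard Liouville argument once one notes $\det v\equiv 1$, which follows from $\tr(\f\g^*)=\g^*\f=\overline{\f^*\g}=0$. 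Crucially, the same theory shows that $\Res_t$ is again integrable, with kernel
$$\Res_t(z,z') = -\frac{\big(\sqrt{\varphi(z)}\,\mathrm{F}_t(z)\big)^*\big(\sqrt{\varphi(z')}\,\mathrm{G}_t(z')\big)}{\pii\,(z-z')},\qquad \mathrm{F}_t=m_+\f,\quad \mathrm{G}_t=(m_+^{-1})^*\g ,$$
which reduces to $\Res_0=K_\varphi$ at $t=0$ as it must; the $C^1$ hypotheses ensure that $\mathrm{F}_t,\mathrm{G}_t$ are differentiable along $\Sigma$.

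Third, since $\Res_t$ is trace-class with jointly continuous kernel, $\tr[\Res_t]=\int_\Sigma \Res_t(z,z)\,dz$. The diagonal value is an a priori indeterminate $0/0$ limit, but $\mathrm{F}_t(z)^*\mathrm{G}_t(z)=\f(z)^*m_+(z)^*(m_+(z)^{-1})^*\g(z)=\f(z)^*\g(z)=0$ on $\Sigma$, so l'Hôpital's rule in the second variable gives $\Res_t(z,z)=\frac{1}{\pii}\big(\sqrt{\varphi}\,\mathrm{F}_t\big)(z)^*\frac{d}{dz}\big(\sqrt{\varphi}\,\mathrm{G}_t\big)(z)$. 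Integrating by parts over the closed curve $\Sigma$ annihilates the boundary term and moves the derivative onto $\mathrm{F}_t$; substituting back into $\int_0^1\tr[\Res_t]\,dt$ and parametrizing $\Sigma$ by $x\in\R$ yields exactly \eqref{Laplace_T}.

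The hard part is the second step: the IIKS identity asserting that the resolvent of an integrable operator is again integrable, with data read off from the boundary value $m_+$ of the Riemann--Hilbert solution. This is the classical input we import from \cite{IIKS} (cf. \cite{Deift99}); a self-contained derivation would require manipulating $(\Id+tK_\varphi)^{-1}$ applied to the defining vectors to exhibit precisely $m_+\f$ and $(m_+^{-1})^*\g$, together with the equivalence between invertibility of the operator and solvability of the RHP. By comparison, the remaining points---trace-class-ness of $K_\varphi$ and $\Res_t$, the identity $\tr=\int(\text{diagonal})$, the l'Hôpital evaluation of the diagonal, the integration by parts, and the regularity needed to make \eqref{Laplace_T} meaningful---are routine given the standing $C^1$ hypotheses.
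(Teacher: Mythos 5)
The paper does not prove Theorem~\ref{thm:Deift}: it is quoted directly from \cite{IIKS} and \cite{Deift99}, with only the resolvent identity \eqref{resolvent} derived in the surrounding text. Your reconstruction follows exactly the route the paper sets up and that the cited references use: feed \eqref{resolvent} into the IIKS fact that the resolvent of an integrable operator is again integrable with data $m_+\f$ and $(m_+^{-1})^*\g$, evaluate the diagonal by l'H\^opital using $\mathrm{F}_t^*\mathrm{G}_t=\f^*m_+^*(m_+^{-1})^*\g=\f^*\g\equiv0$, and integrate by parts (where in fact $(\sqrt\varphi\mathrm{F}_t)^*(\sqrt\varphi\mathrm{G}_t)\equiv0$ on $\Sigma$, so the by-parts relation holds pointwise and there is no boundary term to worry about). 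Your side checks — $\det v\equiv1$ since $\g^*\f=\overline{\f^*\g}=0$, invertibility of $\Id+tK_\varphi$ via positivity of \eqref{Fredholm_det}, and honestly flagging the IIKS resolvent identity as the imported ingredient — are all correct.
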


For instance, when $\Sigma=\R$,  $m_\pm(x)=\lim_{\delta\to0} m(x+i\delta)$ denotes the boundary value  
of the matrix $m$ and are typically assumed to be continuous functions.
 In practice,  $\Sigma=\{|z|=1\}$ for the CUE, or $\Sigma=\R$ for the sine process and  the eigenvalue processes coming from  unitary invariant ensembles of Hermitian matrices.
   Moreover,  the correlation kernels of these processes all give rise to integrable operators. According to formula \eqref{integrable_kernel},  we may choose for the CUE:
$$
\f(z)  = \begin{pmatrix}z^{N+1} \\ 1 \end{pmatrix}  
\quad\text{ and }\quad
 \g(z) = \begin{pmatrix}z^{N+1} \\ -1 \end{pmatrix}, \qquad |z|=1  
$$
 and for the sine process:
$$
\f(x) =\begin{pmatrix} e^{i \pi N x} \\  e^{-i \pi Nx}     \end{pmatrix}
\quad\text{ and }\quad
\g(x) = \begin{pmatrix} e^{i \pi N x} \\  -e^{-i \pi Nx}  \end{pmatrix}, \qquad x \in \mathbb{R}     .
$$
For this example, the Riemann-Hilbert problem \eqref{RHP_0} is solved in Section~\ref{sect:RHP} for a large class of analytic test functions $\varphi$; see in particular Proposition~\ref{thm:RHP} for the asymptotics of the solution. 
As a last comment about universality, if one considers the eigenvalues of an $N\times N$ Hermitian random matrix sampled according to the weight $e^{-N \tr V(H)}$ for a real-analytic external field $V:\R\to\R$, then, by the Christoffel-Darboux formula, the correlation kernel of the eigenvalue process is also integrable with
\begin{equation} 
\f_V(x) =\begin{pmatrix} \pi_N(x) \\  -\pii \gamma_{N-1}^2 \pi_{N-1}(x)    \end{pmatrix} e^{-NV(x)/2}
\hspace{.6cm} \text{and} \hspace{.6cm} 
\g_V(y) =  \begin{pmatrix}   - \pii \gamma_{N-1}^2 \pi_{N-1}(x)\\ \pi_{N}(x) \end{pmatrix} e^{-NV(x)/2}.
\end{equation}
where $\pi_N$ and $\pi_{N-1}$ are the monic polynomials with respect to the measure $e^{-NV(x)}dx$ on $\R$ of degree $N$ and $N-1$ respectively and 
$$
\gamma_{N-1}^{-2} = \int_{\mathbb{R}} \pi_{N-1}(x)^2 e^{-NV(x)} dx . 
$$ 
Observe that, apart from the weight $e^{-NV(x)/2}$, $\f_V$ is exactly the first column of the solution $Y_N$
of the orthogonal polynomial Riemann-Hilbert problem, whose solution is derived in great detail in \cite{DKMVZ}. 
In particular from their results, one can extract the universal oscillatory behavior of the functions $\f_V$ and $\g_V$ in the bulk, which indicates strongly that the approach we present for the sine process could be generalized and would provide a way to show that the limiting  chaos measure has the same law for a large class of potentials $V$.  
However, turning this heuristic into a rigorous computation is rather technical and we leave it as an open problem for future work.

  \section*{Acknowledgements}
 We thank Christian Webb for his interest in our work, for useful discussions and carefully reading the first draft of this paper which helped fix typos and mistakes in the proofs and greatly improved the presentation. 
G. L. thanks Juhan Aru for stimulating discussions about Gaussian multiplicative chaos. N. S. would like to thank Yan Fyodorov for suggesting to study the conjectures of \cite{Ost16} in the context of random matrix theory. Finally, we express our gratitude to the anonymous referees for their detailed comments and suggestions.

\section{Proof of the main results} \label{sect:proof}

Even though the applications discussed in this paper are concerned with random processes defined on $\R$, we will formulate our convergence results in an abstract setting under some general assumptions. Let $\A$ be a compact set in $\R^d$, $d\ge 1$. 
In certain cases, such as the sine process, one might also consider the case where $\A$ is not compact, this requires only slight modifications of our proof.
We consider a real-valued  generalized Gaussian process $G$ defined on $\A$  with a covariance kernel:
$$T(x,y) = - \log|x-y| + g(x,y) ,  $$
where the function $g:\A^2\to\R\cup\{-\infty\}$ is continuous and such that there exists a constant $C>0$ so that for all $x,y\in \R$,
\begin{equation*}
 g(x,y) -\log^+(x-y)   \le C .
\end{equation*}

We also consider a family of real-valued random fields 
$X_{N,\epsilon}(u)$ defined on $\A$ which are centered, depend on two parameters $N,\epsilon>0$, and behave asymptotically like the Gaussian process~$G$. Specifically, we should assume that for any $N>0$,  $\big(u, \epsilon \mapsto X_{N,\epsilon}(u)\big)_{u\in \A , \epsilon>0}$ are random processes defined on the same probability space and that they satisfy Assumptions~\ref{finite_dist} -- \ref{exp_moments} below. For any $\gamma\in\R$,  we consider the normalized process
\begin{equation} \label{normalization}
\X_{N,\epsilon}^\gamma(u) =\gamma X_{N,\epsilon}(u)  -\frac{\gamma^2}{2} \E{X_{N,\epsilon}(u)^2} ,
\end{equation} 
and our goal is to construct the limit of the random measure 
 $$
\mu_{N,\epsilon}^\gamma(du) = \exp\big(\X_{N,\epsilon}^\gamma(u)\big)du
$$
when $\epsilon_N \to 0$ as $N\to\infty$ sufficiently slowly.
To begin with, in Section~\ref{sect:L^2}, we shall prove that $\mu_{N,\epsilon}^\gamma$ converges to a GMC measure in the $L^2$-phase ($\gamma<\sqrt{d}$) and compute the limit of its moments in view of proving Theorem ~\ref{th:moments}. 
Then, in Section~\ref{sect:L^1}, we tackle the more challenging task of showing that $\mu_{N,\epsilon}^\gamma$ converges in the whole subcritical regime ($\gamma<\sqrt{2d}$), thus establishing the proof of Theorem~\ref{th:GMCgeneral}.

\subsection{Convergence of the multiplicative chaos measure in the $L^2$-phase}\label{sect:L^2}

\begin{assumption}[Finite-dimensional convergence in the weak regime] \label{finite_dist} 
For any given $\epsilon, \delta>0$,
we have 
$$
\lim_{N\to\infty} \E{X_{N,\epsilon}(u)X_{N,\delta}(v)} =
T_{\epsilon, \delta}(u,v), 
$$
and the field $u\mapsto X_{N,\epsilon}(u)$ converges in the sense of finite-dimensional distributions to a mean-zero Gaussian process $G_\epsilon$ with covariance structure: 
\begin{equation}\label{regularized_kernel}
T_{\epsilon, \delta}(u,v) = \E{G_\epsilon(u)G_\delta(v) }, 
\end{equation}
for any $u,v\in \A$ and $\epsilon, \delta>0$.
\end{assumption}

In the context of random matrix theory described in the introduction, Assumption~\ref{finite_dist} follows from the CLT for smooth linear statistics and $G_\epsilon = G*\phi_\epsilon$ for some  nice mollifier $\phi$.
In this abstract context, $G_\epsilon$ is a $d+1$ dimensional Gaussian field which is a smooth approximation of a log--correlated $G$ coming from a possibly different regularization procedure. To construct a multiplicative chaos measure out of the field $X_{N,\epsilon}$, one also needs the existence of the GMC measure $\nu^\gamma$ associated with the field $G$. 
As discussed in the proof of Proposition~\ref{L^2_GMC} and Remark~\ref{rk:Berestycki} below it, 
the convergence  follows from the following conditions for the correlation kernels. 

\begin{assumption}[Covariance kernel asymptotics] \label{kernel_asymp}
 Suppose that for all $(u,v)\in \A^2$, 
\begin{equation}  \label{domination}
T_{\epsilon,\delta}(u,v) \le \log^+\left(|u-v|^{-1}\wedge \epsilon^{-1}\wedge\delta^{-1}\right) + C ,
\end{equation}
 and that for almost all $(u,v)\in\A^2$,
\begin{equation} \label{assumption_2}
T_{\epsilon,\delta}(u,v) \to T(u,v) \quad\text{ as }\epsilon,\delta \to0.
 \end{equation}
We also suppose the bound \eqref{domination} is sharp,  in the sense that if $\epsilon\ge \delta\ge 0$ and  $|u-v| \le \exp(- \epsilon^{-1})$, then 
\begin{equation}  \label{assumption_5}
T_{\epsilon,\delta}(u,v) = \log\epsilon^{-1} + \underset{\epsilon\to0}{O}(1) .
\end{equation}
 \end{assumption}

For any $\gamma, \epsilon>0$, let 
$$
\nu^\gamma_{\epsilon}(dx) = \exp\left(\gamma G_\epsilon(x) - \frac{\gamma^2}{2} \E{G_\epsilon(x)^2}\right) dx .
$$
The assumption~\ref{kernel_asymp} guarantees that for any $q \in\N$ such that $\gamma^2q<2d$ and for any $w\in L^1\cap L^\infty(\A)$, the random variable $\nu^\gamma_{\epsilon}(w)$  converges in $L^{q}(\mathbb{P})$  as $\epsilon\to0$. The purpose of the next assumption is to identify the limit.

\begin{assumption}[Convergence of the GMC measure]  \label{GMC_existence}
For any $\gamma<\sqrt{2d}$, let $\nu^\gamma$ be the GMC measure associated with the field $G$ as defined in Theorem~\ref{th:GMCintro}. Then, $\nu^\gamma_{\epsilon} \Rightarrow \nu^\gamma$ as $\epsilon \to 0$.
\end{assumption}

Finally, in order to apply  the {\it second moment method} considered by Webb in \cite{W15}, we will also need to control some exponential moments of the field $(u, \epsilon) \mapsto \X_{N,\epsilon}(u)$. The idea of \cite{W15}  consists in proving that both in the weak regime (when we consider successive limits as $N\to\infty$ and $\epsilon\to0$) and in the strong regime (when $\epsilon_N\to0$ as $N\to\infty$), the limiting random measures coincide and have the same law as the GMC measure $\nu^\gamma$. In particular, we will need the following asymptotics.
For any $q\in\N$ and  $\delta>0$, define
\begin{equation}\label{simplex}
\triangle_q(\delta)=\big\{ \boldsymbol{\epsilon}\in\R^q : \epsilon_1 \ge \cdots \ge \epsilon_q \ge \delta\big\}  .
\end{equation}

\begin{assumption}[Exponential moments asymptotics] \label{exp_moments}
Let $q\in\N$.  We suppose that there exists a sequence $\delta_N\to0$ as $N\to\infty$ so that for any $\boldsymbol{\epsilon}\in \triangle_q(\delta_N)$ and for any  $\mathbf{t} \in\R^q$,  we have   uniformly for all $\u\in \A^q$,   
\begin{equation}  \label{assumption_4}
\log \E{\exp\left( \sum_{k=1}^{q}  \X^{t_k}_{N,\epsilon_k}(u_k) \right)  }
  = 
\hspace{-.1cm}   \sum_{1\le k<j\le q} \hspace{-.1cm}  
t_k t_j T_{\epsilon_j, \epsilon_k}(u_j,u_k)
  +   \underset{N\to\infty}{o(1)}  .
    \end{equation}
\end{assumption}

In the CUE case (Theorem~\ref{th:gmc}), at any mesoscopic scale $0<\alpha<1$,  one may choose the parameter  $\delta_N = N^{\alpha-1+\kappa}$ for some small $\kappa>0$ so that the condition \eqref{c1} is satisfied. On the other hand, for the sine process with density $N$, we may choose $\delta_N=N^{-1}(\log N)^{1+\kappa}$ for any $\kappa>0$.

\begin{theorem} \label{thm:L^2_phase}
Suppose that Assumptions \ref{finite_dist} -- \ref{GMC_existence} hold, as well as Assumption~\ref{exp_moments} for $q=1,2$. 
If  $\gamma^2<d$,  then for any  $w\in L^1\cap L^\infty(\A)$,  the random variable $\mu_{N,\delta_N}^\gamma(w)$ converges in distribution as $N\to \infty$ to $\nu^\gamma(w)$.
Moreover, if Assumption~\ref{exp_moments} is also satisfied for  $q\in\N$ and $\gamma^2q< 2d$, then
\begin{equation} \label{moments_limit}
\lim_{N\to\infty}
\E{\mu_{N,\delta_N}^\gamma(w)^q} 
=  \int\exp\Bigg(\gamma^2\hspace{-.15cm}\sum_{1\le j<k\le q}\hspace{-.15cm} T(u_j,u_k)  \Bigg)\prod_{k=1}^q w(u_k) du_k .
\end{equation}
\end{theorem}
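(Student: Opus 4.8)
The plan is to run the second-moment method of Webb \cite{W15} in this asymptotically-Gaussian setting, treating the two claims (convergence in distribution and convergence of moments) in parallel. First I would fix $w\in L^1(\A)$, which without loss of generality I may take bounded and nonnegative (splitting $w$ into positive and negative parts, and approximating in $L^1$ using that the limiting GMC moments depend continuously on $w$). The key quantities are the moments $\E{\mu_{N,\epsilon_N}^\gamma(w)^q}$ for $q=1,2$ (and later general $q$). Expanding the $q$-th moment gives
\begin{equation*}
\E{\mu_{N,\epsilon_N}^\gamma(w)^q} = \int_{\A^q} \E{\exp\Big(\textstyle\sum_{k=1}^q \X_{N,\epsilon_N}^\gamma(u_k)\Big)} \prod_{k=1}^q w(u_k)\, du_k,
\end{equation*}
and here I would apply assumption~\ref{exp_moments} with $\mathbf{t}=(\gamma,\dots,\gamma)$ — but note that this requires $\boldsymbol{\epsilon}\in\triangle_q(\delta_N)$, i.e. the $\epsilon_k$ must be \emph{distinct} and ordered, whereas in the moment expansion they are all equal to $\epsilon_N$. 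The standard fix (as in \cite{W15}) is a perturbation/continuity argument: replace $\epsilon_N$ by a slightly perturbed ordered tuple $(\epsilon_N, \epsilon_N(1+\eta),\dots)$ staying above $\delta_N$, apply \eqref{assumption_4}, and then let the perturbation vanish using \eqref{assumption_5} and \eqref{domination} to control the change in $T_{\epsilon_j,\epsilon_k}$. This yields
\begin{equation*}
\E{\mu_{N,\epsilon_N}^\gamma(w)^q} \longrightarrow \int_{\A^q} \exp\Big(\gamma^2 \textstyle\sum_{j<k} T(u_j,u_k)\Big) \prod_k w(u_k)\, du_k
\end{equation*}
as $N\to\infty$, provided the integrand is dominated — which holds when $\gamma^2 q < 2d$ by \eqref{domination} and integrability of $|u-v|^{-\gamma^2 q}$ near the diagonal. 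This already proves \eqref{moments_limit}.

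Second, to upgrade moment convergence into convergence in distribution in the $L^2$-phase $\gamma^2<d$, I would combine the $q=1,2$ computations with the weak-regime construction. By assumption~\ref{finite_dist}, for each fixed $\epsilon>0$ the field $X_{N,\epsilon}$ converges in finite-dimensional distributions to the Gaussian $G_\epsilon$; together with the uniform exponential-moment bound from \eqref{assumption_4} (which gives uniform integrability of $\mu_{N,\epsilon}^\gamma(w)$ and its square), this forces $\mu_{N,\epsilon}^\gamma(w) \Rightarrow \nu^\gamma_{T,\epsilon}(w)$ as $N\to\infty$ for each fixed $\epsilon$, and moreover with convergence of the first two moments. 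By assumption~\ref{GMC_existence} (equivalently proposition~\ref{L^2_GMC}/remark~\ref{rk:Berestycki} via assumption~\ref{kernel_asymp}), $\nu^\gamma_{T,\epsilon}(w)\to\nu^\gamma_T(w)$ in $L^2(\mathbb P)$ as $\epsilon\to0$. So the "weak limit" (first $N$, then $\epsilon$) is $\nu^\gamma_T(w)$. To identify the "strong limit" ($\epsilon_N\to0$ as $N\to\infty$) with the same object, I would show the two agree in $L^2$: compute $\E{|\mu_{N,\epsilon_N}^\gamma(w) - \mu_{N,\epsilon}^\gamma(w)|^2}$ by expanding into a sum of three terms of the form $\E{\mu_{N,\delta_1}^\gamma(w)\mu_{N,\delta_2}^\gamma(w)}$ with $\delta_1\in\{\epsilon_N,\epsilon\}$, apply \eqref{assumption_4} with $q=2$ to each, and check that in the limit $N\to\infty$ followed by $\epsilon\to0$ the three terms cancel — this cancellation is exactly where \eqref{assumption_2} and the domination \eqref{domination} are used. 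Hence $\limsup_N \E{|\mu_{N,\epsilon_N}^\gamma(w)-\nu^\gamma_T(w)|^2}=0$, which is far stronger than convergence in distribution.

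The main obstacle I anticipate is the bookkeeping around the \emph{ordered, strictly-separated} tuples $\triangle_q(\delta_N)$ versus the equal-$\epsilon$ configurations that arise naturally in moment expansions, and making the perturbation argument rigorous uniformly in the spatial variables $\u\in\A^q$ — this is precisely why \eqref{assumption_5} (the sharpness of the domination) is imposed, since it pins down $T_{\epsilon,\delta}(u,v)$ when $u,v$ are very close relative to the regularization scale, which is the regime that controls the diagonal contribution to the second moment. A secondary but routine technical point is justifying the interchange of limit and integral in the moment computations: this is a dominated-convergence argument using \eqref{domination}, valid only in the stated range $\gamma^2 q<2d$, and the sharpness of that range is itself explained by the divergence of the Selberg integral \eqref{Selberg} at $q\gamma^2=2d$. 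Everything else — the reduction to bounded nonnegative $w$, the uniform-integrability arguments, the passage from moment convergence to distributional convergence in the $L^2$-phase — is standard once these asymptotic inputs are in place.
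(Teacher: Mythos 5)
Your proposal runs the same second-moment machinery the paper uses, with the same three-step decomposition: prove $\mu_{N,\epsilon}^\gamma(w)\Rightarrow\nu^\gamma_{T,\epsilon}(w)$ for fixed $\epsilon$ (lemma~\ref{thm:weak_cvg}), control $\E{|\mu_{N,\epsilon_N}^\gamma(w)-\mu_{N,\epsilon}^\gamma(w)|^2}$ via the assumption~\ref{exp_moments} expansion (lemma~\ref{thm:L^2_Cauchy}), and combine with the GMC limit $\nu^\gamma_{T,\epsilon}(w)\Rightarrow\nu^\gamma_T(w)$. Your handling of the moment identity \eqref{moments_limit} by Fubini plus dominated convergence against the bound \eqref{domination} matches the paper exactly.

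The one genuine error is in your concluding sentence: you assert $\limsup_N\E{|\mu_{N,\epsilon_N}^\gamma(w)-\nu^\gamma_T(w)|^2}=0$, but this quantity is not even defined in the generality of theorem~\ref{thm:L^2_phase}. The theorem makes no assumption that the fields $X_{N,\cdot}$, $X_{N+1,\cdot},\dots$ live on a common probability space, and a fortiori none of them is coupled to the Gaussian process $G$ from which $\nu^\gamma_T$ is built. (The paper states this explicitly just after the proof, remarking that one may ``upgrade the topology of convergence'' only when such a coupling exists, as for the sine process via proposition~\ref{thm:L^q_weak}.) Your chain of estimates does establish: (i) $\E{|\mu_{N,\epsilon_N}^\gamma(w)-\mu_{N,\epsilon}^\gamma(w)|^2}\to0$ in the iterated limit $N\to\infty$ then $\epsilon\to0$ (both random variables built from the same $X_{N,\cdot}$, so this is legitimate), and (ii) $\mu_{N,\epsilon}^\gamma(w)\Rightarrow\nu^\gamma_{T,\epsilon}(w)\Rightarrow\nu^\gamma_T(w)$. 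But (i) and (ii) combine only via the iterated-limit (``convergence together'') criterion — the paper invokes \cite[Theorem~4.28]{Kallenberg} — and what comes out is convergence in distribution, precisely the mode the theorem claims, not $L^2$-convergence. Replace your final sentence by an appeal to that criterion and your argument is correct.

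On the simplex $\triangle_q(\delta_N)$ subtlety you raise: the paper in fact does \emph{not} carry out the perturbation step. In lemma~\ref{thm:L^2_Cauchy} it applies assumption~\ref{exp_moments} directly to the vectors $\boldsymbol{\epsilon}^i$ with repeated entries, implicitly using the estimate on the closure of the open simplex (or, equivalently, continuity of the uniform estimate \eqref{assumption_4} in $\boldsymbol{\epsilon}$, which holds in both the CUE and sine verifications). Your perturbation-and-pass-to-the-limit route patches a notational imprecision in the paper's formulation of the assumption rather than a genuine gap, and is a perfectly acceptable way to make the step airtight.
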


As mentioned in the introduction, the condition $\gamma^{2}q < 2d$ for the existence of the limiting moments \eqref{moments_limit} is sharp and is related to the convergence of certain multiple integrals, which in case $d=1$ are related to Selberg integrals.
The remainder of this section is devoted to the proof of Theorem~\ref{thm:L^2_phase} and of an extension, Lemma~\ref{thm:L^q_weak}, in the case where there is a coupling of the fields $X_{N,\epsilon}(u)$ for different $N>0$. 
This applies for instance to the sine process discussed in Section~\ref{sect:sine}. 
 Note that to prove the convergence of  the measure $\mu^\gamma_{N,\delta_N}$ and its moments in the $L^2$-phase, it is clear from our assumptions that one can use the same argument as in the proof of Proposition~\ref{L^2_GMC}. However, to identify that this measure has the same distribution as $\nu^\gamma$, it is simpler to first establish that $\mu_{N,\epsilon}^\gamma$ converges in distribution in the weak regime 
   and then to show that the strong and weak limits  coincide; c.f.~Lemma~\ref{thm:weak_cvg} and Lemma~\ref{thm:L^2_Cauchy} respectively. 

\begin{lemma}\label{thm:weak_cvg}
Suppose that $\A$ is compact in $\R^{d}$. Let $w \in L^{1}(\A)$ and fix $\epsilon>0$. 
For any $\gamma \ge 0$, the random variable $\mu_{N,\epsilon}^\gamma(w)$ converges in distribution as $N\to\infty$ to $\nu^\gamma_{\epsilon}(w)$.   
\end{lemma}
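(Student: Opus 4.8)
\textbf{Proof plan for Lemma~\ref{thm:weak_cvg}.}
The plan is to prove convergence in distribution by showing convergence of all moments of $\mu_{N,\epsilon}^\gamma(w)$ to those of $\nu_{T,\epsilon}^\gamma(w)$, and then verifying that the limiting moment sequence determines the distribution uniquely (so that the method of moments applies). First I would fix $\epsilon>0$ and $\gamma\ge0$ and, for a fixed positive integer $q$, expand
\begin{equation*}
\E{\mu_{N,\epsilon}^\gamma(w)^q} = \int_{\A^q} \E{\exp\Big( \sum_{k=1}^q \X_{N,\epsilon}^\gamma(u_k)\Big)} \prod_{k=1}^q w(u_k)\, du_k ,
\end{equation*}
where I may assume $w\ge0$ by splitting into positive and negative parts (and then into real and imaginary parts of the moments if needed, though everything here is real). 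Since $\epsilon$ is held fixed, the $q$-tuple $\boldsymbol{\epsilon}=(\epsilon,\dots,\epsilon)$ does \emph{not} lie in the open simplex $\triangle_q(\delta_N)$ for the strict ordering, so I would instead invoke the finite-dimensional convergence of Assumption~\ref{finite_dist}: the vector $(X_{N,\epsilon}(u_1),\dots,X_{N,\epsilon}(u_q))$ converges in distribution to a Gaussian vector with covariance $T_{\epsilon,\epsilon}(u_j,u_k)$. Because the exponential is unbounded, convergence in distribution alone does not give convergence of the exponential moments; this is exactly where Assumption~\ref{exp_moments} (with the relevant value of $q$, applied with a slightly perturbed $\boldsymbol{\epsilon}'\in\triangle_q(\delta_N)$ and then passing $\epsilon'\to\epsilon$ using continuity of $T_{\epsilon,\delta}$ in its arguments, or directly if the assumption is read to allow equal entries at scale bounded below by $\delta_N$) supplies the uniform control needed to upgrade to
\begin{equation*}
\E{\exp\Big( \sum_{k=1}^q \X_{N,\epsilon}^\gamma(u_k)\Big)} \longrightarrow \exp\Big( \gamma^2\!\!\sum_{1\le j<k\le q}\!\! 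T_{\epsilon,\epsilon}(u_j,u_k)\Big)
\end{equation*}
uniformly for $\u$ in compact subsets of $\A^q$.

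Next I would pass this pointwise (in $\u$) convergence through the integral $\int_{\A^q}(\cdots)\prod w(u_k)\,du_k$. The key point is a uniform integrable domination: by the domination bound \eqref{domination} of Assumption~\ref{kernel_asymp} with $\delta=\epsilon$ fixed, $T_{\epsilon,\epsilon}(u_j,u_k)\le \log^+(\epsilon^{-1})+C$ is \emph{bounded} (since $\epsilon$ is fixed), so the Gaussian limiting integrand is uniformly bounded on $\A^q$; and for the pre-limit, the uniform-in-$\u$ statement of Assumption~\ref{exp_moments} together with the same bound gives a uniform bound on $\E{\exp(\sum_k \X_{N,\epsilon}^{\gamma}(u_k))}$ for all large $N$. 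Since $w\in L^1(\A)$ and $\A$ is a compact ball, $\prod_k w(u_k)\in L^1(\A^q)$, so dominated convergence yields
\begin{equation*}
\lim_{N\to\infty}\E{\mu_{N,\epsilon}^\gamma(w)^q} = \int_{\A^q}\exp\Big(\gamma^2\!\!\sum_{1\le j<k\le q}\!\! T_{\epsilon,\epsilon}(u_j,u_k)\Big)\prod_{k=1}^q w(u_k)\,du_k = \E{\nu_{T,\epsilon}^\gamma(w)^q},
\end{equation*}
the last equality being the Gaussian moment formula \eqref{G_moments} applied to the genuinely Gaussian field $G_\epsilon$ with covariance $T_{\epsilon,\epsilon}$ (which holds without any subcriticality restriction here because $\epsilon$ is fixed and the kernel is bounded).

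Finally I would check that the moment sequence $m_q:=\E{\nu_{T,\epsilon}^\gamma(w)^q}$ determines the law of $\nu_{T,\epsilon}^\gamma(w)$, so that the method of moments gives $\mu_{N,\epsilon}^\gamma(w)\Rightarrow\nu_{T,\epsilon}^\gamma(w)$. Since $G_\epsilon$ is a continuous Gaussian field on the compact set $\A$ with bounded covariance, $\sup_{\A}|G_\epsilon|$ has Gaussian tails, hence $\nu_{T,\epsilon}^\gamma(w)\le \|w\|_{L^1}\exp(\gamma\sup_\A G_\epsilon - \tfrac{\gamma^2}{2}\inf_\A \E{G_\epsilon^2})$ has at worst log-normal-type tails bounded by $\exp(C(\log x)^2)$; such tails are light enough that the Carleman condition $\sum_q m_{2q}^{-1/2q}=\infty$ holds, so the distribution is moment-determinate. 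The main obstacle I anticipate is the first step: carefully reconciling the fixed scale $\epsilon$ with the strict simplex $\triangle_q(\delta_N)$ appearing in Assumption~\ref{exp_moments} — i.e.\ justifying that the uniform exponential-moment asymptotics are available at the diagonal scale $\epsilon_1=\dots=\epsilon_q=\epsilon$ (fixed, hence $\ge\delta_N$ for $N$ large, but with equal entries). This is handled either by a harmless perturbation of the scales within the simplex followed by a limit using the regularity of $T_{\epsilon,\delta}$ in $(\epsilon,\delta)$, or by observing that the proof of Assumption~\ref{exp_moments} in the random-matrix examples (formula \eqref{full_asymp}) in fact permits equal regularization scales. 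Everything else — dominated convergence, the Gaussian moment identity, and moment-determinacy — is routine given the stated assumptions.
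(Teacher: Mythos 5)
Your route is genuinely different from the paper's, and unfortunately it has a fatal gap at the last step. You reduce the lemma to convergence of all integer moments plus a moment-determinacy check, and for the latter you claim the moments of $\nu^\gamma_{T,\epsilon}(w)$ satisfy the Carleman condition $\sum_q m_{2q}^{-1/2q}=\infty$ because the tails are ``log-normal-type, bounded by $\exp(-c(\log x)^2)$.'' But tails of that form give moments of order $m_q\asymp e^{Cq^2}$, so $m_{2q}^{-1/2q}\asymp e^{-Cq}$ and the Carleman series \emph{converges}. Carleman's criterion therefore fails, and indeed the log-normal distribution is the canonical example of a law that is \emph{not} determined by its moments. (To see the issue most starkly, take $\A$ degenerate to a point, where $\nu^\gamma_{T,\epsilon}(w)$ is literally a constant times a log-normal random variable.) With $\gamma$ allowed to be arbitrarily large and $\epsilon$ merely fixed, nothing in your argument rules out moment-indeterminacy, so the method of moments does not close the proof. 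There is also a secondary technicality you do not address: for signed $w$ you would need joint moment convergence of $(\mu(w_+),\mu(w_-))$ together with determinacy of the joint law, which inherits the same problem.

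The paper avoids moments entirely. It regards $\xi_{N,\epsilon}(u)=\exp(\X^\gamma_{N,\epsilon}(u))$ as a random element of $L^1(\A,\,w\,du)$, proves finite-dimensional convergence from Assumption~\ref{finite_dist}, and establishes tightness in $L^1$ via the criterion of \cite{CKtight86}, which only requires uniform boundedness and convergence of the \emph{first} absolute moment $\E{|\xi_{N,\epsilon}(u)|}$ --- both of which follow from Assumption~\ref{exp_moments} with $q=1$. Weak convergence of the process then passes through the continuous linear functional $\xi\mapsto\int\xi\,w\,du$. This is both more economical (it needs Assumption~\ref{exp_moments} only for $q\le2$, which is all that Theorem~\ref{thm:L^2_phase} posits) and sidesteps the determinacy obstruction. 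Your observation about the strict simplex $\triangle_q(\delta_N)$ versus the diagonal $\epsilon_1=\cdots=\epsilon_q=\epsilon$ is a legitimate reading issue, but not the real obstacle: the paper itself applies Assumption~\ref{exp_moments} with repeated scales (e.g.\ in Lemma~\ref{thm:L^2_Cauchy}), so the assumption is meant to cover that case. The moment-determinacy step is the gap that would need repair, and I do not see how to repair it within the method of moments without additional input.
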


\proof
By assumption \ref{finite_dist} and continuity of the exponential function, the finite dimensional distributions of the process $\xi_{N,\epsilon}(u) = \exp\big(\X^{\gamma}_{N,\epsilon}(u)\big)$ converge to those of $\xi_{\epsilon}(u) = \exp\big( \widetilde{G}^\gamma_{\epsilon}(u)\big)$ as $N \to \infty$ where
$$
\widetilde{G}^\gamma_{\epsilon}(u) = \gamma G_{\epsilon}(u)-\frac{\gamma^{2}}{2}\mathbb{E}(G_{\epsilon}(u)^{2}) . 
$$
 We also claim that $\xi_{N,\epsilon}(u)$ is tight in $L^{1}(\A, |w(u)|du)$, so that, by Prokhorov's theorem, $\xi_{N,\epsilon}\Rightarrow \xi_{\epsilon}$ as $N \to \infty$.
The tightness follows from a criteria established in \cite{CKtight86} which shows that when $\A$ is compact it suffices that there exists a constant $C_\epsilon>0$ so that
 \begin{equation} \label{tightness_assumption}
\sup_{u\in\A} \E{|\xi_{N,\epsilon}(u)|} \le C_\epsilon .
 \end{equation}
Notice that, since $G_\epsilon$ is a Gaussian process,  $ \E{|\xi_{\epsilon}(u)|} =1$
and the estimate \eqref{tightness_assumption} follows directly from Assumption \ref{exp_moments}. Since the functional $\xi \to \int \xi(u)w(u)du$ is obviously continuous on $L^{1}(\A, |w(u)|\,du)$, we conclude that as $N\to\infty$, 
$$ \mu_{N,\epsilon}^\gamma(w) = \int \xi_{N,\epsilon}(u)w(u)du 
\Rightarrow \nu_{\epsilon}^\gamma(w) =\int \xi_{\epsilon}(u)w(u)du  . \qed$$ 

\begin{remark} This proof relies on the fact that the sequence $(\xi_{N,\epsilon})_{N>0}$ is tight in $L^{1}(\A, |w(u)|du)$ for any $\epsilon>0$ and that the condition \eqref{tightness_assumption} is straightforward to check. However, for the CUE or sine statistics (c.f.~\eqref{smoothing} and \eqref{X_field} respectively), using the specific form of the test function $\J_u \ast \phi_{\epsilon}$, it is also possible to verify that the criterion (4) of \cite[Theorem~16.5]{Kallenberg} holds, which implies that $X_{N,\epsilon} \Rightarrow X_{\epsilon}$ as random elements of $C(\A\to\R)$. 
\end{remark}

\begin{lemma}\label{thm:L^2_Cauchy}
 Let $q$ be an even integer such that $\gamma^{2}q<2d$. Then, for any $w\in L^1\cap L^\infty(\A)$, 
\begin{equation*} 
\lim_{\epsilon\to0, N\to\infty } \E{\left| \mu_{N,\delta_N}^\gamma(w) -\mu_{N,\epsilon}^\gamma(w) \right|^q} =0 .
\end{equation*}
\end{lemma}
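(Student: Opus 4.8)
The plan is to expand the $q$-th moment of the difference as a sum of mixed moments and use the exponential-moment asymptotics of Assumption \ref{exp_moments} together with the covariance-kernel asymptotics of Assumption \ref{kernel_asymp} to show that all the cross terms converge to the same Gaussian-type integral, so that the difference vanishes in the limit. Concretely, since $q$ is an even integer and the test function $w$ may be taken nonnegative (splitting $w=w_+-w_-$ reduces to this case by the triangle inequality and linearity), I would write
\begin{equation*}
\E{\left|\mu_{N,\delta_N}^\gamma(w)-\mu_{N,\epsilon}^\gamma(w)\right|^q}
= \sum_{j=0}^q (-1)^j \binom{q}{j}\, \E{\mu_{N,\delta_N}^\gamma(w)^{\,q-j}\,\mu_{N,\epsilon}^\gamma(w)^{\,j}} ,
\end{equation*}
and then insert the definition $\mu_{N,\cdot}^\gamma(w)=\int \exp(\X_{N,\cdot}^\gamma(u))w(u)\,du$. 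Using Fubini (justified because $w\in L^1$ has, after the reduction, bounded total mass on $\A$ and all integrands are nonnegative), each term becomes $\int_{\A^q}\E{\exp(\sum_{k=1}^q \X_{N,\epsilon_k}^{\gamma}(u_k))}\prod_k w(u_k)\,du_k$ where $\epsilon_1=\cdots=\epsilon_{q-j}=\delta_N$ and $\epsilon_{q-j+1}=\cdots=\epsilon_q=\epsilon$. To make the multi-index $\boldsymbol{\epsilon}$ lie in the open simplex $\triangle_q(\delta_N)$ of \eqref{simplex}, I would perturb: replace $\delta_N$ by $q$ distinct values slightly above $\delta_N$ and $\epsilon$ by $q$ distinct values slightly above $\epsilon$ (still with $\epsilon\ge\delta_N$), apply Assumption \ref{exp_moments} with $\mathbf{t}=(\gamma,\dots,\gamma)$, and then pass these auxiliary scales back using continuity of $T_{\epsilon,\delta}$ off the diagonal and the domination \eqref{domination}; the reordering of coordinates needed to match the strict ordering in $\triangle_q$ is harmless since the left side of \eqref{assumption_4} is symmetric.

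Having done this, Assumption \ref{exp_moments} gives, uniformly in $\u\in\A^q$,
\begin{equation*}
\E{\exp\Big(\sum_{k=1}^q \X_{N,\epsilon_k}^{\gamma}(u_k)\Big)}
= \exp\Big(\gamma^2\hspace{-.1cm}\sum_{1\le k<l\le q}\hspace{-.1cm} T_{\epsilon_l,\epsilon_k}(u_l,u_k)\Big)(1+o(1)) ,
\end{equation*}
so that after multiplying by $\prod_k w(u_k)$ and integrating, I want to take $N\to\infty$ and then $\epsilon\to0$ inside the integral. For this I need a dominating function: by \eqref{domination}, $T_{\epsilon_l,\epsilon_k}(u_l,u_k)\le \log^+(|u_l-u_k|^{-1})+C$, hence the integrand is bounded by $C'\prod_{k<l}|u_l-u_k|^{-\gamma^2}\prod_k w(u_k)$, which is integrable over $\A^q$ precisely because $\gamma^2 q<2d$ (this is where the hypothesis on $q$ enters, via convergence of the relevant Selberg-type integral — cf.\ \eqref{Selberg} and the discussion around \eqref{G_moments}). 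Then \eqref{assumption_2} gives $T_{\epsilon_l,\epsilon_k}(u_l,u_k)\to T(u_l,u_k)$ for almost every $(u_l,u_k)$ as the scales go to $0$, and dominated convergence yields
\begin{equation*}
\lim_{\epsilon\to0,\,N\to\infty}\E{\mu_{N,\delta_N}^\gamma(w)^{\,q-j}\mu_{N,\epsilon}^\gamma(w)^{\,j}}
=\int_{\A^q}\exp\Big(\gamma^2\hspace{-.1cm}\sum_{1\le k<l\le q}\hspace{-.1cm}T(u_l,u_k)\Big)\prod_k w(u_k)\,du_k =: I_q ,
\end{equation*}
a value independent of $j$. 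Summing the binomial expansion, $\sum_{j=0}^q(-1)^j\binom{q}{j}I_q = I_q(1-1)^q=0$, which is exactly the claim.

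The main obstacle I anticipate is not the algebra of the binomial cancellation but the \emph{uniform} control needed to exchange limits: Assumption \ref{exp_moments} only provides $o(1)$ uniformly in $\u$, so I must be careful that the $o(1)$ from the exponential-moment asymptotics, once multiplied by the (large near the diagonal) factor $\exp(\gamma^2\sum T_{\epsilon_l,\epsilon_k})$ and integrated against a possibly unbounded $L^1$ weight, still vanishes — this is why the domination \eqref{domination} together with the strict inequality $\gamma^2 q<2d$ is essential, and also why the precise lower bound \eqref{assumption_5} is needed to rule out the integrand blowing up faster than the integrable Selberg singularity when $u_l$ and $u_k$ are within $\exp(-\delta^{-1})$ of each other. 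A secondary technical point is the reduction from general $w\in L^1$ to compactly-controlled nonnegative $w$ and the Fubini/measurability bookkeeping, but these are routine given that $\A$ is compact.
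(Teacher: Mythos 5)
Your proof follows the same route as the paper's: binomial expansion, Fubini, Assumption~\ref{exp_moments} with $\mathbf{t}=(\gamma,\dots,\gamma)$, a dominating function from \eqref{domination} giving a Selberg-type integrand locally integrable precisely when $\gamma^2 q<2d$, pointwise convergence from \eqref{assumption_2}, and binomial cancellation. Two small remarks: the perturbation trick you introduce to fit the repeated scales into the \emph{open} simplex $\triangle_q(\delta_N)$ is a legitimate refinement that the paper silently glosses over, whereas your claim that the lower bound \eqref{assumption_5} is needed here is not accurate --- the upper bound \eqref{domination} alone supplies the required domination (the uniform $o(1)$ only multiplies the integrand by $1+o(1)$), and \eqref{assumption_5} is invoked only in the $L^1$-phase argument of Lemma~\ref{thm:L^1_bound} and Proposition~\ref{thm:L^1_Cauchy}.
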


\proof  Suppose that $\delta_N\le \epsilon$. 
For any $0\le i \le q$, define 
$$
\boldsymbol{\epsilon}^i = \big(\underbrace{\epsilon,\dots,\epsilon}_{ i \#},
\underbrace{\delta_N,\dots,\delta_N}_{ (q-i) \# }\big) 
$$
and let for any $\u\in\A^q$,
$$ \Theta_{\epsilon,N}^i(\u) =
\hspace{-.2cm}   \sum_{1\le k<j\le q} \hspace{-.2cm}  
T_{\epsilon_j^i,\epsilon_k^i}(u_j, u_k)  .
$$
By Fubini's theorem and Assumption~\ref{exp_moments} with $\mathbf{t} =(\gamma, \dots, \gamma) $, we obtain
\begin{equation}  \label{integral_3}
\E{\left| \mu_{N,\delta_N}^\gamma(w) -\mu_{N,\epsilon}^\gamma(w) \right|^q}
= \sum_{i=0}^q (-1)^i \binom{q}{i} 
\int_{\A^q}  \exp\left(\gamma^2 \Theta^i_{\epsilon,N}(\u) +  \underset{N\to\infty}{o(1)} \right)
 \prod_{k=1}^q w(u_k) du_k  ,
\end{equation}
where the error term is uniform.
Moreover, the condition \eqref{assumption_2} implies that  for any $i\in[q]$ and for almost all $\u\in \A^q$, 
\begin{equation*} 
 \lim_{\epsilon\to0, N\to\infty }  \Theta_{\epsilon,N}^i(\u) = 
  \hspace{-.2cm}   \sum_{1\le k<j\le n} \hspace{-.2cm}   
   T(u_j,u_k) .
\end{equation*}
Finally, the condition~\eqref{domination} shows that for all $\u\in \A^{q}$, 
\begin{equation} \label{domination'}
\exp\bigg( \gamma^2 \Theta_{\epsilon,N}^i(\u)  \bigg) \ll
\hspace{-.2cm} \prod_{1\le j<k\le q}\hspace{-.15cm}  1\vee |u_j-u_k|^{-\gamma^2} .
\end{equation}
Hence, since the RHS of \eqref{domination'}
is locally integrable on $(\R^d)^q$  when $\gamma^2q< 2d$, 
by the dominated convergence theorem, the integrals on the RHS of formula \eqref{integral_3} 
converge for all $i \in [q]$ to the same finite value while taking the limit as $N\to\infty$ and then as $\epsilon\to0$.  Since $\sum_{i=0}^q (-1)^i \binom{q}{i} =0$, this proves the claim.\qed\\

\noindent
{\it Proof of Theorem~\ref{thm:L^2_phase}.}
By Lemma~\ref{thm:weak_cvg}, for any $\epsilon>0$, we have 
 $\mu_{N,\epsilon}^\gamma(w) \Rightarrow \nu^\gamma_{\epsilon}(w)$ as $N\to\infty$ and Assumption~\ref{GMC_existence} guarantees that $\nu^\gamma_{\epsilon}(w) \Rightarrow \nu^\gamma(w) $
as $\epsilon\to0$. 
Hence, by  \cite[Theorem~4.28]{Kallenberg} and Lemma~\ref{thm:L^2_Cauchy} with $q=2$, this implies that  
$ \mu_{N,\delta_N}^\gamma(w) \Rightarrow   \nu^\gamma(w)$ as $N\to\infty$.
To complete the proof, it remains to establish convergence of the moments of the random variable $\mu_{N,\delta_N}^\gamma(w)$. 
We proceed as in the proof of Lemma~\ref{thm:L^2_Cauchy}.
By Fubini's theorem, for any $q\in\N$ and $\epsilon>0$, we have
\begin{equation}\label{Fubini}
\E{\mu_{N,\epsilon}^\gamma(w)^q} 
= \int_{\R^q} \E{\exp\bigg( \sum_{k=1}^q \X_{N,\epsilon}^\gamma(u_j)\bigg)}\prod_{k=1}^q w(u_k) du_k .
\end{equation}
Then, Assumptions~\ref{kernel_asymp} and \ref{exp_moments} imply that for almost all $\u\in \A^q$, 
\begin{equation}  \label{assumption_3}
\lim_{\begin{subarray}{c} N\to\infty \\ \epsilon\to0 \end{subarray}} \E{\exp\bigg( \sum_{k=1}^q \X_{N,\epsilon}^\gamma(u_j)\bigg)} = \exp\Bigg(\gamma^2\hspace{-.15cm}\sum_{1\le j<k\le q}\hspace{-.15cm} \E{G(u_j) G(u_k)} \Bigg) 
\end{equation}
in both the weak and strong regime (as long as $\epsilon(N) \ge \delta_N$). 
Moreover, the condition~\eqref{domination} guarantees that for all $\u\in \R^q$, 
\begin{equation*}
\E{\exp\bigg( \sum_{k=1}^q \X_{N,\epsilon}^\gamma(u_j)\bigg) }\le C
\hspace{-.2cm} \prod_{1\le j<k\le q}\hspace{-.15cm}  1\vee |u_j-u_k|^{-\gamma^2} .
\end{equation*}
Hence, if $\gamma^2q< 2$, formula~\eqref{moments_limit} follows directly from \eqref{Fubini}--\eqref{assumption_3} and the dominated convergence theorem.\qed\\

Note that in the context of Theorem~\ref{thm:L^2_phase} we did not require that the  fields $X_{N,\epsilon}$, $X_{N+1,\epsilon}, \dots$ are defined on the same probability space. However, if such a coupling is available, as in the case of the sine process, then we can upgrade the topology of convergence in Theorem~\ref{thm:L^2_phase} by replacing Lemma~\ref{thm:weak_cvg} by the following result. 

\begin{lemma} \label{thm:L^q_weak}
Using the notation \eqref{X_field}--\eqref{X_measure} where $\Lambda_1$ is the sine process. 
For any $w\in L^1\cap L^\infty(\R)$ and for any $q\ge 1$, the random variable $\mu_{N,\epsilon}^{\gamma,\mathrm{Sine}}(w)$ converges  in  $L^q(\mathbb{P})$   as $N\to\infty$ to a limit $\mu^{\gamma}_\epsilon(w)$ whose law is the same as $ \nu^\gamma_\epsilon(w)$.   
\end{lemma}

Before proving Lemma~\ref{thm:L^q_weak}, let us first use the previous results to obtain  Theorem~ \ref{thm:sine_moments}. \\

\noindent{\it Proof of  Theorem~ \ref{thm:sine_moments}.}
Let $\delta_N = N^{-1}(\log N)^{1+\kappa}$ where $\kappa>0$,  $\phi$ be a function which satisfies Assumption~\ref{phi_assumption},  and define for any  $\mathbf{t}\in\R^n$, $\u\in\R^n$ and $\boldsymbol{\epsilon} \in \triangle_n(\delta_N) $,
\begin{equation*} 
h_{\u, \boldsymbol{\epsilon}}(z) = \pi
\sum_{k=1}^n t_k \int_\R \1_{|x-u_k|\le \ell/2}\ \phi_{\epsilon_k}(z-x) dx . 
\end{equation*}
This function is analytic in  $|\Im z| \le c \delta_N $
and we claim that it satisfies  Assumption~\ref{h_assumption} in this strip with 
$$
C_\infty(h_{\u, \boldsymbol{\epsilon}}) = e^{\pi C_\phi |\mathbf{t}|}
\quad\text{ and }\quad
C_1(h_{\u, \boldsymbol{\epsilon}}) =  \pi C_\phi |\mathbf{t}| (\ell \vee 1) ,
$$
where $|\mathbf{t}| = |t_1|+\cdots+|t_n|$. 
To check this assumption, we can use the bounds:
$$
\left|\int_\R \1_{|x-u|\le \ell/2}\ \phi_{\varepsilon}(z-x) dx \right|
\le \int_\R \big| \phi( x+ i\Im z) \big| dx 
$$
and 
$$
\iint_{\R\times\R} \1_{|x-u|\le \ell/2}\ \big|\phi_{\varepsilon}(t-x +is)\big|dx  dt
\le \ell \int_\R \big| \phi( x+ is) \big| dx ,
$$
which hold for any $u\in\R$ and $\varepsilon>0$. 
This implies that we can apply Proposition~\ref{thm:sine_asymp}.
Moreover,  since the sine process  has constant density $N$ on $\R$, the leading term in formula~\eqref{Laplace_asymp}  corresponds  to the expected value of the linear statistic $\sum_{\lambda\in\Lambda_N} h_{\u, \boldsymbol{\epsilon}}(\lambda)$, and by definition of the $H^{1/2}$ Gaussian noise $\Xi$ (see Appendix~\ref{appendix}), the second order term corresponds to the variance of the Gaussian random variable $\Xi(h_{\u, \boldsymbol{\epsilon}})$.
Thus, we get
\begin{equation*} 
\log \E{\exp\left( \sum_{\lambda\in\Lambda_N} h_{\u, \boldsymbol{\epsilon}}(\lambda)  \right)  }
  =     \E{ \sum_{\lambda\in\Lambda_N} h_{\u, \boldsymbol{\epsilon}}}
+\frac{\E{\Xi\big(h_{\u, \boldsymbol{\epsilon}})^2}}{2}
  +   \underset{N\to\infty}{O}\left(\delta_N^{-3/2}   (|\mathbf{t}|\ell)^2 e^{\pi (6C_\phi|\mathbf{t}|-\delta_N N)} \right) . 
  \end{equation*}
  By definition of the random field \eqref{X_field} and using the scaling property 
  \eqref{X_scaling}, we have the equality in law
$$\sum_{\lambda\in\Lambda_N} h_{\u, \boldsymbol{\epsilon}}(\lambda) \overset{d}{=} \sum_{k=1}^n t_k X_{N, \epsilon_k}(u_k) , $$
and using the 
representation~\eqref{G_regularization}, we also have
$\Xi(h_{\u, \boldsymbol{\epsilon}})  \overset{d}{=}  \sum_{k=1}^n G_{\phi, \epsilon_k}(u_k)$. 
Hence,
in the regime where $|\mathbf{t}|$ and $\ell>0$ are independent of the parameter $N$, this implies that for any $\beta<1+\kappa$ and for any $\boldsymbol{\epsilon} \in \triangle_n(\delta_N) $ with $\delta_N = (\log N)^{1+\kappa}/N$, we have
  \begin{equation} \label{full_asymp}
\log \E{\exp\left( \sum_{k=1}^n t_k \X_{N,\epsilon_k}(u_k) \right)  }
  = 
  \sum_{1 \le k< j \le n}  t_kt_j
\E{G_{\phi, \epsilon_j}(u_j)G_{\phi, \epsilon_k}(u_k)  }
  +   \underset{N\to\infty}{O}\left(e^{-(\log  N)^\beta} \right) , 
  \end{equation}
  uniformly for all $\u\in\R^n$.
In particular, this  immediately shows that the random field $u\mapsto X_{N, \epsilon}(u)$ 
  satisfies Assumptions~\ref{finite_dist} and~\ref{exp_moments} for all $q\in\N$. 
  Moreover, if the mollifier $\phi \in \mathcal{D}$, by Corollary~\ref{covariance_assumptions}, Assumption~\ref{kernel_asymp} holds too.
  Consequently, by Theorem~\ref{thm:L^2_phase},  we obtain the convergence of the moments, formula~\eqref{cvg_moments}. Then, if $w\in L^1 \cap L^\infty(\R) $ and $q$ is an even integer, according to Lemma~\ref{thm:L^q_weak}, we have  for any given $\epsilon>0$, $\mu_{N,\epsilon}^{\gamma,\mathrm{Sine}}(w)$ converges in $L^q(\mathbb{P})$ as $N\to\infty$ to the random variable  $\mu_{\epsilon}^{\gamma}(w)$.
In addition, if $\gamma^2q<2$, by Proposition~\ref{L^2_GMC} and Remark~\ref{rk:Berestycki} below it, the random variable $\mu^\gamma_\epsilon(w)$ constructed above converges in $L^q(\mathbb{P})$ as $\epsilon\to0$  to a random variable $\mu^\gamma(w)$ which has the same law as $\nu^\gamma(w)$. In other words, we have
\begin{equation*} 
\lim_{\epsilon\to0, N\to\infty } \E{\left| \mu_{N,\epsilon}^{\gamma,\mathrm{Sine}}(w) -\mu^\gamma(w) \right|^q} =0 .
\end{equation*}
On the other hand, by Lemma~\ref{thm:L^2_Cauchy} and the triangle inequality, this implies that
\begin{equation*} 
\lim_{N\to\infty } \E{\left| \mu_{N,\delta_N}^{\gamma,\mathrm{Sine}}(w) -\mu^\gamma(w) \right|^q} =0 
\end{equation*}
which concludes the proof of Theorem~\ref{thm:sine_GMC}. \qed\\

\noindent{\it Proof of Lemma~\ref{thm:L^q_weak}.}
To keep the notation simple, we will prove the Proposition when $q=2$ which is the most interesting case. It is straightforward to generalize the argument to any even $q$.
 As in the proof of Theorem~\ref{thm:sine_GMC} (c.f.~formula \eqref{full_asymp}), it is easy to check that the asymptotics of Proposition~\ref{thm:sine_asymp} implies that for any $\eta, \eta' \in \{0,1\}$, 
    \begin{equation*} 
\log \E{\exp\left(  \X_{N+\eta,\epsilon}^\gamma(u) +   \X_{N+\eta',\epsilon}^\gamma(v)  \right)  }
  =   \gamma^2 T_{\epsilon,\epsilon}(u,v)
  +   \underset{N\to\infty}{O_{\eta,\eta'}}\left(e^{-(\log  N)^\beta} \right) 
  \end{equation*}
 uniformly for all $u,v\in\R$, where 
$T_{\epsilon,\epsilon}(u,v) = \E{G_{\phi, \epsilon}(u)G_{\phi, \epsilon}(v) }$ and $\beta<1+\kappa$ .  
By expanding the square, this implies that 
\begin{align} 
\E{\left| \mu_{N,\epsilon}^{\rm \gamma, Sine}(w) -\mu_{N+1,\epsilon}^{\rm \gamma, Sine}(w) \right|^2} 
&\label{Cauchy_property}=
  \int_{\R}   e^{\gamma^2 
T_{\epsilon,\epsilon}(u, v)} \Big\{ \hspace{-.3cm} \sum_{\eta,\eta' \in \{0,1\}} 
(-1)^{\eta+\eta'}\exp \underset{N\to\infty}{O_{\eta,\eta'}}\left(e^{-(\log  N)^\beta} \right) \Big\} w(u)w(v)dudv .
\end{align}
Note that the leading terms on the RHS of formula \eqref{Cauchy_property} cancel and the error terms are uniform. Moreover, by  \eqref{domination},
$
T_{\epsilon,\epsilon}(u,v) \le \log^+(\epsilon^{-1}) + C 
$ on $\R^2$ and since $w\in L^1(\R)$, we obtain for any $\epsilon>0$, 
$$
\E{\left| \mu_{N,\epsilon}^{\rm \gamma, Sine}(w) -\mu_{N+1,\epsilon}^{\rm \gamma, Sine}(w) \right|^2} 
=\underset{N\to\infty}{O_{w,\epsilon}}\left(e^{-(\log  N)^\beta} \right) .
$$
Thus, since we may choose the parameter $\beta>1$, by the triangle inequality, 
this shows that $\big(\mu_{N,\epsilon}^{\rm \gamma, Sine}(w)\big)_{N>0}$ is a Cauchy sequence in $L^2(\mathbb{P})$. 
Let us denote by $\mu^\gamma_\epsilon(w)$ its limit.  
We will use Lemma~\ref{thm:weak_cvg} to identify the law of this limit. 
For any  $M>0$, we let $w_M(u) = w(u)\1_{\{ |u| \le M\}}$
 and 
$w_M^*(u) = w(u)\1_{\{ |u| > M\}} $.
On the one hand, by formula~\eqref{Fubini} and using Assumption \ref{exp_moments}  and the bound \eqref{domination}, we obtain
\begin{align*} 
\lim_{N\to\infty}\E{ \mu_{N,\epsilon}^{\gamma,\mathrm{Sine}}(w^*_M)^2} = \E{\mu_{\epsilon}^\gamma(w^*_M)^{2}}
\ll \epsilon^{-1}  \int_{\begin{subarray}{c} |u|>M \\ |v|>M \end{subarray}}  w(u) w(v) dudv . 
\end{align*}
In addition, we easily check that $\mu^\gamma_\epsilon(w) = \mu^\gamma_\epsilon(w_M) + \mu^\gamma_\epsilon(w_M^*) $  so that by taking the limit as $M\to\infty$, 
$$
\mu^\gamma_\epsilon(w_M) \Rightarrow \mu^\gamma_\epsilon(w) .
$$
An analogous computation shows that for the regularized GMC measure:
$  \nu^\gamma_\epsilon(w_M) \Rightarrow    \nu^\gamma_\epsilon(w)$ as $M\to\infty$.  
On the other hand, since the functions $w_M$ have compact support, by Lemma~\ref{thm:weak_cvg}, we know that
$\mu_{\epsilon}^\gamma(w_M) \overset{d}{=} \nu^\gamma_\epsilon(w_M)$ for any $M>0$ . Hence, we conclude that $\mu^{\gamma}_{\epsilon}(w) \overset{d}{=} \nu^\gamma_\epsilon(w)$. \qed

\subsection{Convergence of the multiplicative chaos measure in the $L^1$-phase}\label{sect:L^1}

We work in the same context as in the previous section and  the goal is to establish the convergence of the random measure 
$\mu_{N,\epsilon}^\gamma(du) = \exp\big(\X_{N,\epsilon}^\gamma(u)\big)du$ 
throughout the subcritical phase $0<\gamma<\sqrt{2d}$. 

\begin{theorem}\label{thm:L^1_phase}
Suppose that Assumptions~\ref{finite_dist}--\ref{cvg} hold for $q\le 3$ in \ref{exp_moments}.  If $w \in L^1\cap L^{\infty}(\A)$ and  $\gamma<\sqrt{2d}$, then for any sequence $\epsilon_N \to 0$ in such a way that $\epsilon_N \ge \delta_N$,   the random variable $\mu_{N,\epsilon_N}^\gamma(w)$ converges in distribution to  $\nu^\gamma(w)$ as $N\to \infty$. 
\end{theorem}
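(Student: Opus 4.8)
The plan is to follow the same two-step scheme as in the proof of Theorem~\ref{thm:L^2_phase} — first weak-regime convergence, then identification of the strong-regime limit with it — but to replace the plain second-moment computation of Lemma~\ref{thm:L^2_Cauchy}, which is no longer available once $\gamma^2\ge d$, by a \emph{truncated} second-moment argument adapting Berestycki~\cite{Berestycki15} to the asymptotically Gaussian setting. The weak step is immediate: for each fixed $\epsilon>0$, Lemma~\ref{thm:weak_cvg} gives $\mu^\gamma_{N,\epsilon}(w)\Rightarrow\nu^\gamma_{T,\epsilon}(w)$ as $N\to\infty$, and by Remark~\ref{rk:Berestycki} (whose hypotheses follow from Assumption~\ref{kernel_asymp}) one has $\nu^\gamma_{T,\epsilon}(w)\to\nu^\gamma_T(w)$ in $L^1(\mathbb P)$ as $\epsilon\to0$. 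Assuming, as we may, that $w\ge0$, it therefore suffices in view of \cite[Theorem~4.28]{Kallenberg} to prove that
$$
\lim_{\epsilon\to0}\ \limsup_{N\to\infty}\ \E{\big|\mu^\gamma_{N,\delta_N}(w)-\mu^\gamma_{N,\epsilon}(w)\big|}=0,
$$
after which the conclusion $\mu^\gamma_{N,\delta_N}(w)\Rightarrow\nu^\gamma_T(w)$ follows, and the very same estimates apply along any sequence $\epsilon_N\to0$ with $\epsilon_N\ge\delta_N$.

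To handle this difference, fix a truncation level $\alpha$ with $\gamma<\alpha<2\gamma$ and a cutoff $m\in\N$, and introduce the \emph{thin-point} event $\mathcal G^{\alpha,m}_N(u)=\{\,Z^N_k(u)\le\alpha k\ \text{for all } m\le k\le\log\delta_N^{-1}\,\}$ together with the truncated measure $\mu^{\gamma,\alpha,m}_{N,\epsilon}(du)=\exp(\X^\gamma_{N,\epsilon}(u))\,\mathbf{1}_{\mathcal G^{\alpha,m}_N(u)}\,du$. The argument rests on two estimates. The first is that the truncation error is uniformly negligible: since $\E{\exp(\X^\gamma_{N,\epsilon}(u))}=1+o(1)$ by Assumption~\ref{exp_moments} with $q=1$,
$$
\E{\big|\mu^\gamma_{N,\epsilon}(w)-\mu^{\gamma,\alpha,m}_{N,\epsilon}(w)\big|}=\int w(u)\,\Big(1-\mathbb P^u_{N,\epsilon}\big(\mathcal G^{\alpha,m}_N(u)\big)\Big)\,du+o(1),
$$
and by the weak convergence of $Z^N(u)$ under the biased law $\mathbb P^u_{N,\epsilon}$ (Assumption~\ref{cvg}, \eqref{cvg_1}--\eqref{cvg_2}) the integrand converges to $1-\mathbb G_u(Z_k\le\alpha k\ \forall k\ge m)$; since under $\mathbb G_u$ the coordinate $Z_k$ has mean $\gamma T_{0,e^{-k}}(u,u)=\gamma k+O(1)$ with variance $k+O(1)$ and $\alpha>\gamma$, this tends to $0$ as $m\to\infty$. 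A uniform exponential-Chebyshev bound from Assumption~\ref{exp_moments} with $q=2$ (applied to $\exp(s(X_{N,e^{-k}}(u)-\alpha k))$ against the single biasing point) makes the convergence uniform in $u$ and in $\epsilon\ge\delta_N$, so that $\sup_N\E{|\mu^\gamma_{N,\epsilon}(w)-\mu^{\gamma,\alpha,m}_{N,\epsilon}(w)|}\to0$ as $m\to\infty$.

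The second estimate is that, for \emph{fixed} $\alpha$ and $m$, one has $\E{|\mu^{\gamma,\alpha,m}_{N,\delta_N}(w)-\mu^{\gamma,\alpha,m}_{N,\epsilon}(w)|^2}\to0$ as $N\to\infty$ and then $\epsilon\to0$, proved exactly like Lemma~\ref{thm:L^2_Cauchy}: expanding the square and invoking Assumption~\ref{exp_moments} with $q=2$ reduces it to the biased two-point probabilities $\mathbb P^{u,v}_{N,\epsilon}(\mathcal G^{\alpha,m}_N(u)\cap\mathcal G^{\alpha,m}_N(v))$ and their mixed-regime analogue under $\widetilde{\mathbb P}^{u,v}_{N,\epsilon}$, which converge by \eqref{cvg_3}--\eqref{cvg_5} to the corresponding $\mathbb G_{u,v}$-probabilities. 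The key point is the dominating bound
$$
e^{\gamma^2 T_{\epsilon,\epsilon}(u,v)}\,\mathbb P^{u,v}_{N,\epsilon}\big(\mathcal G^{\alpha,m}_N(u)\cap\mathcal G^{\alpha,m}_N(v)\big)\le C_m\,|u-v|^{-\gamma^2+\frac12(2\gamma-\alpha)^2},
$$
obtained from the fact that under $\mathbb G_{u,v}$ the common part of the two walks carries drift $2\gamma k$ up to the branching scale $k^\ast=\log|u-v|^{-1}$ while $\mathcal G^{\alpha,m}$ caps it at $\alpha k<2\gamma k$, a Gaussian large deviation costing $e^{-\frac12(2\gamma-\alpha)^2 k^\ast}$; deriving this bound uniformly in $N$ and $\epsilon$ uses Assumption~\ref{exp_moments} with $q=3$ (the two scales $\epsilon$ of the bias together with one offending scale $e^{-k}$ from the event, after replacing the bad event for the pair by the sum of the bad events for $u$ and for $v$). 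Because $\gamma^2<2d$, one may take $\alpha>\gamma$ close enough to $\gamma$ that $\gamma^2-\tfrac12(2\gamma-\alpha)^2<d$, so the right-hand side is locally integrable on $(\R^d)^2$ and dominated convergence completes the $L^2$-Cauchy estimate (and, via the analogue of Lemma~\ref{thm:weak_cvg} for the truncated measures, identifies the common limit with the truncated GMC). Combining the two estimates by the triangle inequality, $\E{|\mu^\gamma_{N,\delta_N}(w)-\mu^\gamma_{N,\epsilon}(w)|}$ is at most twice the truncation error plus a term vanishing as $N\to\infty$ then $\epsilon\to0$; letting $m\to\infty$ last yields the displayed limit.

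The hard part will be the pair bound in the second estimate, and specifically making it uniform in $N$ and $\epsilon$: the finite-dimensional convergence of Assumption~\ref{cvg} only pins down the limiting $\mathbb G_{u,v}$-probability, so one must marry the Gaussian large-deviation estimate valid in the limit with a uniform exponential-moment control — this is exactly the role of the hypothesis ``\ref{exp_moments} for $q\le3$'' — in order to apply dominated convergence over $(u,v)$ with a single $N$-independent envelope. It is also here that the subcriticality threshold enters and is sharp: at $\alpha=\gamma$ the exponent $\gamma^2-\tfrac12(2\gamma-\alpha)^2$ equals $\gamma^2/2$, which is strictly less than $d$ precisely when $\gamma<\sqrt{2d}$, so an effective truncation level can be chosen if and only if $\gamma^2<2d$.
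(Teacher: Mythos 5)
Your proposal tracks the paper's proof essentially step for step: the weak-regime reduction via Lemma~\ref{thm:weak_cvg}, Assumption~\ref{GMC_existence}, and \cite[Theorem~4.28]{Kallenberg}; the thin-point truncation at a level $\alpha\in(\gamma,2\gamma)$; the negligibility of the bad set (the paper's Lemma~\ref{thm:L^1_bound}, where the paper actually uses the $q=1$ moment asymptotics together with \eqref{cvg_1} and dominated convergence rather than a separate exponential-Chebyshev bound, but the conclusion is the same); and the truncated $L^2$-Cauchy estimate with the key exponent $\tfrac{(\alpha-2\gamma)^2}{2}-\gamma^2$ obtained from the $q=3$ exponential moments via Markov's inequality (the paper's Proposition~\ref{thm:L^1_Cauchy}). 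The only differences are cosmetic --- your $m$ is the paper's $L$, and the paper carries the good/bad decomposition through the triangle inequality without needing a weak-convergence statement for the truncated measures themselves --- so this is the same argument.
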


The condition $\gamma <\sqrt{2d}$ is sharp in the sense that we expect that
$\mu_{N,\epsilon}^\gamma(du) \to 0$ for any $\gamma \ge \sqrt{2d}$.
The proof of Theorem~\ref{thm:L^1_phase} follows the elementary argument introduced \cite{Berestycki15} to prove convergence of the GMC measure in the $L^1$-phase.  
In fact, the asymptotics \eqref{assumption_4} are so strong that Berestycki's method can be applied to the field 
$ u\mapsto X_{N,\epsilon}(u)$ modulo a few technical issues. The main idea stems from the fact that the measure $
\mu_{N,\epsilon}^\gamma$ is supported on the so-called $\gamma$-thick points, \eqref{gamma_points}. 
More specifically, we will proceed to show that, if the parameter $\alpha>\gamma$,  the mass
$$
\mu_{N,\epsilon_N}^\gamma\left( u\in\A : \frac{X_{N,\epsilon}(u)}{\log\epsilon^{-1}}  >\alpha \text{ for some } \epsilon \in \{ e^{-k} :  L  \le k \le \log \delta_N^{-1} \}\right)
$$
 converges to 0 in $L^1(\mathbb{P})$  as $N\to\infty$ and then $L\to\infty$. Then we will show that the random measures $\mu_{N,\epsilon_N}^\gamma$ restricted to the good set 
$ \left\{ u \in \A : \frac{X_{N,\epsilon}(u)}{\log\epsilon^{-1}}  \le \alpha \text{ for all } \epsilon \in \{ e^{-k} :  L  \le k \le \log \delta_N^{-1} \}\right\}$ form a Cauchy net in $L^2(\mathbb{P})$ (in the sense of  Proposition~\ref{thm:L^1_Cauchy}) if $\alpha$ is sufficiently close to $\gamma$ and $\gamma <\sqrt{2d}$.
Like in Section~\ref{sect:L^2}, we will rely on the fact that the weak and strong limits coincide to identify that the law of the random variable $\mu^\gamma$ is the same law as the GMC measure $\nu^\gamma$.  
First of all, we need to introduce further notation and establish some preparatory lemmas. 
Then, we will give the proofs of  Theorem~\ref{thm:L^1_phase}, Theorem~\ref{th:GMCgeneral} and finally of Theorem~\ref{thm:sine_GMC}.\\

For any $u\in\A$ and $N>0$, let $Z^N(u)$ be the random variable taking values in $\R^\N$  (measurable with respect to the process $(X_{N, \epsilon})_{\epsilon>0}$) 
given by
\begin{equation} \label{BBM}
Z^N_k(u) =
 X_{N, e^{-k}}(u) . 
\end{equation}
Here, $\R^\N$ is equipped with the usual product topology and its Borel $\sigma$-algebra.
For any $u\in\A$, let
 $\mathbb{P}^{u}_{N, \epsilon}$ be the probability measure with Radon-Nykodym derivative proportional to
$\exp\big( \X_{N,\epsilon}(u)\big) $ with respect to the probability measure~$\mathbb{P}$. 
Finally, for any $(u,v)\in\A^2$ such that $u\neq v$, let $\mathbb{P}^{u,v}_{N, \epsilon}$ be the probability measure with Radon-Nykodym derivative proportional to
$\exp\big(\X^\gamma_{N,\epsilon}(u)+\X^\gamma_{N,\epsilon}(v)\big) $ and, in the mixed regime,  let $\Pt^{u,v}_{N, \epsilon}$ be the probability measure with Radon-Nykodym derivative  proportional to
$\exp\big(\X^\gamma_{N,\delta_N}(u)+\X^\gamma_{N,\epsilon}(v)\big) $  with respect to $\mathbb{P}$.  We make the following assumption :

\begin{assumption}[Weak convergence of  finite-dimensional distributions  under the biased measures] \label{cvg}
For any $(u,v)\in\A^2$ such that $u\neq v$, one has
\begin{align*} 
& {\bf Law}_{\mathbb{P}^{u,v}_{N, \epsilon_N}}\big( Z^N(u), Z^N(v) \big) \rightarrow \mathbb{G}_{u,v} 
\quad \text{ as } N\to\infty , \\
&{\bf Law}_{\mathbb{P}^{u,v}_{N, \epsilon}}\big( Z^N(u), Z^N(v)\big) 
 \rightarrow
  \mathbb{G}_{u,v}  
  \quad\text{ as }N\to\infty\text{ and then }\epsilon\to0 , \\
& {\bf Law}_{\widetilde{\mathbb{P}}^{u,v}_{N, \epsilon}}\big( Z^N(u), Z^N(v)\big) 
 \rightarrow
  \mathbb{G}_{u,v}  
  \quad\text{ as }N\to\infty\text{ and then }\epsilon\to0 , 
 \end{align*} 
 where the convergence holds weakly for finite dimensional marginals and $\mathbb{G}_{u,v}$ is  a Gaussian measure on $\R^\N\times \R^\N$  with mean
 \begin{equation} \label{G_1}
 \mathbf{E}_{\mathbb{G}_{u,v}}[ Z_k(x)]= \gamma \big( T_{0,e^{-k}}(x,x)+ T_{0,e^{-k}}(u,v)  \big) , 
\quad \forall x\in \{u,v\} ,  
\end{equation}
and covariance structure:
\begin{equation} \label{G_2}
\langle Z_k(x);Z_j(y) \rangle_{\mathbb{G}_{u,v}}=  T_{e^{-k},e^{-j}}(x,y) ,  
\qquad \forall x,y\in \{u,v\} . 
\end{equation}
\end{assumption}

For any $\alpha>0$ and $0< L <M$, we  define the following events:
\begin{equation} 
\textstyle
\mathrm{A}_{L,M}^\alpha(Z) = \bigcap_{k=L}^M \{ Z_k < \alpha k\}
\quad \text{ and }\quad
\mathrm{A}_{L,M}^{\alpha *}(Z) =  \{ \exists k \in[ L, M] :  Z_k  \ge \alpha k  \} .
\end{equation}
Observe that one has $ \mathrm{A}_{L,\infty}^\alpha(Z) =  \bigcap_{M=L}^\infty \mathrm{A}_{L,M}^\alpha(Z) $ and  for any  $0< L < L' < M$, 
\begin{equation}
\mathrm{A}_{L,L'}^\alpha(Z) \cap \mathrm{A}_{L,M}^{\alpha *}(Z) = \mathrm{A}_{L,L'}^\alpha(Z) \cap \mathrm{A}_{L',M}^{\alpha *}(Z). \label{intersection} 
\end{equation}

\begin{lemma} \label{thm:GT_1}
Let $M_N = \lfloor \log \delta_N^{-1} \rfloor$. 
If $\gamma< \alpha$ and $L$ is sufficiently large, one has for any $\epsilon>0$, 
$$
\limsup_{N\to\infty} \mathbb{P}^{u}_{N, \epsilon} \left[  \mathrm{A}_{L,M_N}^{\alpha *}(Z^N(u))  \right]  \ll e^{-\frac{(\alpha-\gamma)^2}{4}L} . 
$$
\end{lemma}
\proof 
By a union bound, 
\begin{align*}
\mathbb{P}^{u}_{N, \epsilon} \left[  \mathrm{A}_{L,M_N}^{\alpha *}(Z^N(u))  \right] 
 \le \sum_{k=L}^\infty  \mathbb{P}^{u}_{N, \epsilon}\left[ Z^N_k(u) \ge \alpha k \right] . 
 \end{align*}
 By Assumption~\ref{finite_dist},  under the law $ \mathbb{P}^{u}_{N, \epsilon}$, $Z^N_k(u)$ converges weakly to Gaussian random variable with mean  $\gamma T_{\epsilon,e^{-k}}(u,u) $ and variance  $T_{e^{-k},e^{-k}}(u,u) $ for any $k\in \N$. Therefore, by the Portmanteau Theorem and  a Gaussian tail-bound, we obtain
 \begin{align*}
\limsup_{N\to\infty} \mathbb{P}^{u}_{N, \epsilon} \left[  \mathrm{A}_{L,M_N}^{\alpha *}(Z^N(u))  \right]  
&\le  \sum_{k=L}^\infty  \exp\left(- \frac{\big(\alpha k -\gamma T_{\epsilon,e^{-k}}(u,u) \big)^2}{2 T_{e^{-k},e^{-k}}(u,u)} \right) . 
\end{align*}
Thus, by Assumption~\ref{kernel_asymp}, if $\alpha>\gamma$ and the parameter $L$ is sufficiently large, we can assume that  for all $k\ge L$, 
$$\alpha k -\gamma T_{\epsilon,e^{-k}}(u,u) \ge  \sqrt{3}(\alpha-\gamma) k/2
\quad\text{ and }\quad
T_{e^{-k},e^{-k}}(u,u) \le 3k/2 .$$
This implies that for any $u\in\A$,
\begin{align*}
\limsup_{N\to\infty} \mathbb{P}^{u}_{N, \epsilon} \left[  \mathrm{A}_{L,M_N}^{\alpha *}(Z^N(u))  \right]  
&\le \sum_{k \ge L} \exp\left(-\frac{(\alpha - \gamma)^2}{4}k \right) 
\end{align*}
which  completes the proof. \qed\\

\begin{lemma} \label{thm:GT_2}
Let $M_N = \lfloor \log \delta_N^{-1} \rfloor$ and suppose that $\gamma< \alpha< 2\gamma $.  One has for any $u\in\A$,
$$ \lim_{L\to\infty} \lim_{N\to\infty}  \mathbb{P}^{u}_{N, \epsilon_N} \left[  \mathrm{A}_{L,M_N}^{\alpha *}(Z^N(u))  \right] = 0 .  $$
Similarly, one has for any points $u,v \in \A$ and  $x \in\{ u,v\}$, 
$$
 \lim_{L\to\infty} \lim_{N\to\infty}  \mathbb{P}^{u,v}_{N, \epsilon_N}\left[\mathrm{A}_{L,M_N}^{\alpha *}(Z^N(x))\right]  = 0 . 
$$
\end{lemma}

\proof By a union bound  and Markov's inequality, we obtain for any $t>0$, 
\begin{align*}
\mathbb{P}^{u}_{N, \epsilon_N} \left[  \mathrm{A}_{L,M_N}^{\alpha *}(Z^N(u))  \right] 
& \le \sum_{k=L}^{M_N}  \mathbb{P}^{u}_{N, \epsilon_N}\left[ Z^N_k(u) \ge \alpha k \right]\\
&\le  \sum_{k=L}^{M_N}   e^{-t\alpha k + \frac{t^2}{2} \E{X_{N, e^{-k}}(u)^2}} \E{ e^{  
\X^t_{N, e^{-k}}(u) + \X^\gamma_{N, \epsilon_N}(u)}}
\end{align*}
Using the assumptions, in particular the asymptotics \eqref{assumption_4}, this implies that
\begin{align*}
\mathbb{P}^{u}_{N, \epsilon_N} \left[  \mathrm{A}_{L,M_N}^{\alpha *}(Z^N(u))  \right] 
\ll  \sum_{k=L}^{M_N}   e^{-t\alpha k + \frac{t^2}{2} T_{e^{-k}, e^{-k}}(u,u) + t\gamma  T_{e^{-k}, \epsilon_N}(u,u)  }.
\end{align*}
Using the upper-bound \eqref{domination} and choosing $t = \alpha-\gamma$, this shows that 
\begin{align*}
\limsup_{N\to\infty}\mathbb{P}^{u}_{N, \epsilon_N} \left[  \mathrm{A}_{L,M_N}^{\alpha *}(Z^N(u))  \right] 
&\ll  \sum_{k=L}^\infty   e^{-t k \big( (\alpha- \gamma) - t/2 \big)} \\
&\ll e^{-L(\alpha- \gamma)^2/ 2 } .
\end{align*}
Letting $L\to\infty$, this completes the first part of the proof. 
Now, for the second estimate, an analogous argument shows that 
$$
\mathbb{P}^{u,v}_{N, \epsilon_N}\left[\mathrm{A}_{L,M_N}^{\alpha *}(Z^N(x))\right]  
\ll  \sum_{k=L}^\infty   e^{-t\alpha k + \frac{t^2}{2} T_{e^{-k}, e^{-k}}(u,u) + t\gamma  \big( T_{e^{-k}, \epsilon_N}(x,u) + T_{e^{-k}, \epsilon_N}(x,v)  \big)  + \gamma^2 T_{\epsilon_N, \epsilon_N}(u,v) }.
$$
Then, if $L \ge  \zeta = \log|u-v|^{-1}$, still choosing $t = \alpha-\gamma < \gamma$,  we obtain
\begin{align*}
\limsup_{N\to\infty}
\mathbb{P}^{u,v}_{N, \epsilon_N}\left[\mathrm{A}_{L,M_N}^{\alpha *}(Z^N(x))\right]   
&\ll e^{2\gamma^2 \zeta}  \sum_{k=L}^\infty   e^{-t k \big( (\alpha- \gamma) - t/2 \big)} \\
&\ll e^{- L(\alpha- \gamma)^2/ 2  + 2\gamma^2 \zeta} 
\end{align*}
which converges to 0 as $L\to\infty$. 
\qed \\

\begin{proposition} \label{thm:L^1_Cauchy}
Let $M_N = \lfloor \log \delta_N^{-1} \rfloor$ and  $0<\gamma<\sqrt{2d}$.
There exists $\gamma<\alpha <2\gamma$ so that, under the assumptions of Theorem~\ref{thm:L^1_phase},  one has
$$ \lim_{\epsilon\to0} \lim_{N\to\infty}
\E{\left| \mu_{N,\epsilon_N}^\gamma\big(w\1_{\mathrm{A}_{L,M_N}^\alpha(Z^N)}\big) -\mu_{N,\epsilon}^\gamma\big(w\1_{\mathrm{A}_{L,M_N}^\alpha(Z^N)}\big) \right|^2} =0 . 
$$
\end{proposition}

\proof In this proof, the parameter $L$ is assumed to be large but fixed. 
For any $(u,v) \in \A^2$, $\epsilon>0$ and $N>0$, let us define
$$
 \mathscr{W}_{N,\epsilon}^{u,v} = \E{\exp\big(\X_{N,\epsilon}(u)+\X_{N,\epsilon}(v)\big) }
\quad\text{ and }\quad
 \widetilde{\mathscr{W}}_{N,\epsilon}^{u,v} = \E{\exp\big(\X_{N,\epsilon_N}(u)+\X_{N,\epsilon}(v)\big) } . 
$$
We also let
$$
 \mathcal{I}_{N,\epsilon} = \iint_{\A^2} \mathbb{P}^{u,v}_{N, \epsilon}[\mathrm{A}_{L,M_N}^\alpha(Z^N(u)) \cap \mathrm{A}_{L,M_N}^\alpha(Z^N(v))]  \mathscr{W}_{N,\epsilon}^{u,v}   w(u)w(v)dudv
$$
and 
$$
 \widetilde{\mathcal{I}}_{N,\epsilon} = \iint_{\A^2} \widetilde{\mathbb{P}}^{u,v}_{N, \epsilon}[\mathrm{A}_{L,M_N}^\alpha(Z^N(u)) \cap \mathrm{A}_{L,M_N}^\alpha(Z^N(v))]  \widetilde{\mathscr{W}}_{N,\epsilon}^{u,v}   w(u)w(v)dudv . 
$$

As for the proof of Proposition~\ref{thm:L^2_Cauchy}, we may expand  
\begin{equation} \label{expansion_2}
\E{\left| \mu_{N,\epsilon_N}^\gamma(w\1_{{\rm A}^\alpha_{L,M_N}(Z^N)}) -\mu_{N,\epsilon}^\gamma(w\1_{{\rm A}^\alpha_{L,M_N}(Z^N)}) \right|^2} 
=
\mathcal{I}_{N,\epsilon_N} +  \mathcal{I}_{N,\epsilon} -2  \widetilde{\mathcal{I}}_{N,\epsilon}
\end{equation}
and we would like to prove that all the terms converge to the same limit when $N\to\infty$ and then $\epsilon\to0$. 
We will focus on computing  the limit  of the integral $\mathcal{I}_{N,\epsilon_N}$ as $N\to\infty$. 
On the one hand by \eqref{intersection} and elementary algebra, we obtain for any $L< L' < M_N$,  
\begin{align*}
\mathbb{P}^{u,v}_{N, \epsilon_N}\left[\mathrm{A}_{L,M_N}^\alpha(Z^N(u)) \cap \mathrm{A}_{L,M_N}^\alpha(Z^N(v))\right] 
= &\ \mathbb{P}^{u,v}_{N, \epsilon_N}\left[\mathrm{A}_{L,L'}^\alpha(Z^N(u))\cap \mathrm{A}_{L,L'}^\alpha(Z^N(v))\right]    \\
&\hspace{-1cm}- \mathbb{P}^{u,v}_{N, \epsilon_N}\left[\mathrm{A}_{L,L'}^\alpha(Z^N(u))\cap \mathrm{A}_{L,L'}^\alpha(Z^N(v))\cap \mathrm{A}_{L',M_N}^{\alpha *}(Z^N(u))\right]\\
&\hspace{-1cm}- \mathbb{P}^{u,v}_{N, \epsilon_N}\left[\mathrm{A}_{L,L'}^\alpha(Z^N(v))\cap\mathrm{A}_{L',M_N}^{\alpha *}(Z^N(v)) \cap \mathrm{A}_{L,M_N}^\alpha(Z^N(u))\right] . 
\end{align*}

The Assumption~\ref{cvg} implies that, under the law $\mathbb{P}^{u,v}_{N, \epsilon_N}$, $\big( Z^N_k(u),Z^N_k(v)\big)_{k=L}^{L'}$ converges weakly to a certain Gaussian vector. Then,  by Lemma~\ref{thm:GT_2}, this implies that for any $u\neq v$, 
\begin{align*}
\lim_{N\to\infty}\mathbb{P}^{u,v}_{N, \epsilon_N}\left[\mathrm{A}_{L,M_N}^\alpha(Z^N(u)) \cap \mathrm{A}_{L,M_N}^\alpha(Z^N(v))\right] 
=    \mathbb{G}_{u,v}  \left[\mathrm{A}_{L,L'}^\alpha(Z(u))\cap \mathrm{A}_{L,L'}^\alpha(Z(v))\right] + \underset{L'\to\infty}{o(1)} . 
\end{align*}
Consequently, by the monotone convergence theorem, we obtain 
\begin{align} \label{weak_cvg}
\lim_{N\to\infty}\mathbb{P}^{u,v}_{N, \epsilon_N}\left[\mathrm{A}_{L,M_{N}}^\alpha(Z^N(u)) \cap \mathrm{A}_{L,M_{N}}^\alpha(Z^N(v))\right] 
=  \mathbb{G}_{u,v}  \left[\mathrm{A}_{L,\infty}^\alpha(Z(u))\cap \mathrm{A}_{L,\infty}^\alpha(Z(v))\right] . 
\end{align}
On the other hand, Assumption \ref{exp_moments} implies that for all $(u,v) \in\A^2$,
\begin{equation}\label{partition}
\mathscr{W}_{N,\epsilon_N}^{u,v} \sim \exp\big(\gamma^2 T_{\epsilon_N,\epsilon_N}(u,v)\big)
\quad\text{ as }N\to\infty.
\end{equation}
So, in the regime  where $\gamma>\sqrt{d}$, we expect the limit of the integral $\mathcal{I}_{N,\epsilon_N}$ to be finite only if the probability $\mathbb{G}_{u,v}\left[ \mathrm{A}_{L,\infty}^\alpha(Z(u)), \mathrm{A}_{L,\infty}^\alpha(Z(v))\right]$ 
converges to 0 sufficiently fast as  $|u-v| \to 0 $. 
In order to prove this, we use that $\mathrm{A}_{L,M_N}^\alpha(Z)
\subset \{ Z_\zeta \le \alpha\zeta \}$ where we choose
$$
\zeta = \begin{cases}
\log |u-v|^{-1} &\text{if } L \le \log |u-v|^{-1} \le \log \epsilon_N^{-1} \\
L &\text{if }  \log |u-v|^{-1} \le L \\
 \log \epsilon_N^{-1} &\text{if }   |u-v| \le \epsilon_N
\end{cases}
$$
so that
$$
\mathbb{P}^{u,v}_{N, \epsilon_N}\left[\mathrm{A}_{L,M_N}^\alpha(Z^N(u))\cap \mathrm{A}_{L,M_N}^\alpha(Z^N(v))\right]  \le \mathbb{P}^{u,v}_{N, \epsilon_N}\left[Z^N_\zeta(u) \le \alpha\zeta \right] . 
$$
By Markov's inequality and the asymptotics \eqref{assumption_4}, this implies that for any $t>0$, 
\begin{align*}
 \mathbb{P}^{u,v}_{N, \epsilon_N}\left[Z^N_\zeta (u)\le \alpha\zeta \right]
\le &\frac{\E{\exp\left(\gamma X_{N,\epsilon_N}(u)+\gamma X_{N,\epsilon_N}(v) - tX_{N,e^{-\zeta}}(u)   \right) }}{\E{\exp\big(\gamma X_{N,\epsilon_N}(u)+ \gamma X_{N,\epsilon_N}(v)\big) }}e^{ \alpha t \zeta} \\  
&=\exp\left(   \alpha t \zeta -
\gamma t \left(T_{\epsilon_N, e^{-\zeta}}(u,u)+T_{\epsilon_N, e^{-\zeta}}(u, v) \right)
  +  \frac{t^2}{2} T_{ e^{-\zeta},  e^{-\zeta}}(u,u) + \underset{N\to\infty}{o(1)} \right) . 
\end{align*} 
Then, choosing 
$t = t_*(N, \gamma, \alpha,u,v) := \frac{ 
\gamma T_{\epsilon_N, e^{-\zeta}}(u,u)+\gamma T_{\epsilon_N, e^{-\zeta}}(u,v) -  \alpha \zeta }{T_{ e^{-\zeta},  e^{-\zeta}}(u,u) }, $
 we have
\begin{equation} \label{estimate_3}
 \mathbb{P}^{u,v}_{N, \epsilon_N}\left[Z^N_\zeta \le \alpha\zeta \right]
\ll   \exp\left( -\frac{ t_*^2}{2} T_{ e^{-\zeta},  e^{-\zeta}}(u,u) \right) . 
\end{equation}
Note that by definition of $\zeta$, Assumption~\ref{kernel_asymp} implies that
$T_{\epsilon_N, e^{-\zeta}}(u, v)  = \zeta +\hspace{-.1cm} \underset{L\to\infty}{O}\hspace{-.1cm}(1)$ when  $|u-v| \le e^{-L}$. In particular, in the regime $|u-v| \le e^{-L}$, by \eqref{assumption_5}, we see that
$
t_*=(2\gamma-\alpha)+ \underset{L\to\infty}{o(1)}. 
$
Hence, if  the parameter $L$ is sufficiently large and $\alpha<2\gamma$,  the parameter $t^*>0$ so that the bound \eqref{estimate_3} holds and we obtain 
\begin{equation*} \notag
 \mathbb{P}^{u,v}_{N, \epsilon_N}\left[Z^N_\zeta \le \alpha\zeta \right]
\ll   \exp\left( -\frac{ (2\gamma-\alpha)^2}{2} \zeta \right) . 
\end{equation*}
This shows that for all $u,v \in\A$ such that $|u-v| \le e^{-L}$, 
\begin{equation*}
 \mathbb{P}^{u,v}_{N, \epsilon_N}\left[\mathrm{A}_{L,M_N}^\alpha(Z^N(u))\cap \mathrm{A}_{L,M_N}^\alpha(Z^N(v))\right]  
\ll  \big(  |u-v|  \vee \epsilon_N \big)^{\frac{ (2\gamma-\alpha)^2}{2}} . 
\end{equation*}
Moreover, in the regime  $|u-v| > e^{-L}$, the RHS of \eqref{partition} remains bounded as $N\to\infty$. 
Hence, using the bound \eqref{domination}, these estimates show that  
\begin{align}  \notag
 \mathbb{P}^{u,v}_{N, \epsilon}[\mathrm{A}_{L,M_N}^\alpha(Z^N(u))\cap \mathrm{A}_{L,M_N}^\alpha(Z^N(v))]  \mathscr{W}_{N,\epsilon_N}^{u,v} 
& \ll  \big(  |u-v|  \vee \epsilon_N \big)^{\frac{ (2\gamma-\alpha)^2}{2} -\gamma^2} \\
&  \label{estimate_5} 
\ll   |u-v|^{ -\gamma^2 +\frac{ (2\gamma-\alpha)^2}{2}} .
\end{align}
The LHS of \eqref{estimate_5} is locally integrable on $\R^{d}\times \R^d$ if 
$\gamma^2 - \frac{(2\gamma-\alpha)^2}{2} < d$. 
Hence, as long as $\gamma^2<2d$, it is possible to choose 
$\alpha >\gamma$ so that this condition is satisfied.
By the dominated convergence theorem,  formulae \eqref{weak_cvg}--\eqref{partition} and \eqref{assumption_2}, we conclude that 
$\displaystyle
\lim_{N\to\infty}  \mathcal{I}_{N,\epsilon_N} = \mathcal{I}_{\infty}^{\gamma, L} 
$
where
\begin{equation*} 
\mathcal{I}_{\infty}^{\gamma, L} :=  \iint_{\R^2}  \mathbb{G}_{u,v}\left[ \mathrm{A}_{L,\infty}^\alpha(Z(u))\cap \mathrm{A}_{L,\infty}^\alpha(Z(v))\right]   e^{\gamma^2 T(u,v)} w(u)w(v) dudv <\infty .
\end{equation*}
We  can apply the same argument in the weak and mixed regimes as well and obtain
\begin{equation*}
\lim_{\epsilon\to0}\lim_{N\to\infty}  \mathcal{I}_{N,\epsilon} 
= \lim_{\epsilon\to0}\lim_{N\to\infty}  \widetilde{\mathcal{I}}_{N,\epsilon}
=  \mathcal{I}_{\infty}^{\gamma, L} . 
\end{equation*}
In the end, combining  these limits with formula \eqref{expansion_2}, we have completed the proof. \qed\\

\noindent{\it Proof of Theorem~\ref{thm:L^1_phase}.}
Recall that by Lemma~\ref{thm:weak_cvg}, for any $\epsilon>0$, 
 $\mu_{N,\epsilon}^\gamma(w) \Rightarrow  \nu^\gamma_\epsilon(w)$ as $N\to\infty$ and, according to Assumption~\ref{GMC_existence}, if $\gamma<\sqrt{2d}$, the random variable $ \nu^\gamma_\epsilon(w) \Rightarrow  \nu^\gamma(w) $
as $\epsilon\to0$. Therefore, by  \cite[Theorem~4.28]{Kallenberg}, it suffices to establish that \begin{equation*} 
 \limsup_{\epsilon\to0} \limsup_{N\to\infty}\E{\left| \mu_{N,\epsilon_N}^\gamma(w) -\mu_{N,\epsilon}^\gamma(w) \right|}  = 0 .
\end{equation*}
By the triangle inequality, we have for any $L, \alpha>0$ and $\epsilon>0$,
\begin{align*}  \notag
 \E{\left| \mu_{N,\epsilon_N}^\gamma(w) -\mu^\gamma_{N, \epsilon}(w) \right|}  
&\le \E{\left| \mu_{N,\epsilon_N}^\gamma\left(w\1_{\mathrm{A}_{L,M_N}^\alpha(Z^N)}\right) -\mu_{N,\epsilon}^\gamma\left(w\1_{\mathrm{A}_{L,M_N}^\alpha(Z^N)}\right) \right|} \\
& \quad
+\E{\left| \mu_{N,\epsilon_N}^\gamma\left(w\1_{\mathrm{A}_{L,M_N}^{\alpha*}(Z^N)}\right) \right|}
 + \E{\left|\mu_{N,\epsilon}^\gamma\left(w\1_{\mathrm{A}_{L,M_N}^{\alpha*}(Z^N)}\right) \right|} .
 \end{align*}
Then, by Proposition~\ref{thm:L^1_Cauchy}, if $\gamma<\sqrt{2d}$, by choosing the parameter $\alpha>\gamma$ appropriately,   we obtain
\begin{align} \label{estimate_4}
 \limsup_{\epsilon\to0} \limsup_{N\to\infty}\E{\left| \mu_{N,\epsilon_N}^\gamma(w) -\mu_{N,\epsilon}^\gamma(w) \right|}  
\le  &\limsup_{ N\to\infty} \E{\left| \mu_{N,\epsilon_N}^\gamma\left(w\1_{\mathrm{A}_{L,M_N}^{\alpha*}(Z^N)}\right) \right|}\\
&\notag +  \limsup_{\epsilon\to0} \limsup_{N\to\infty} \E{\left|\mu_{N,\epsilon}^\gamma\left(w\1_{\mathrm{A}_{L,M_N}^{\alpha*}(Z^N)}\right) \right|}.
 \end{align}

Since the LHS of \eqref{estimate_4} does not depend on the parameter $L>0$, in order to complete the proof, it suffices to show that 
\begin{align} 
\label{limit_1}
\lim_{L\to\infty} \lim_{N\to\infty}\E{\left|\mu_{N,\epsilon_N}^\gamma\left(w\1_{\mathrm{A}_{L,M_N}^{\alpha*}(Z^N)}\right) \right|}  & =0 ,\\
\label{limit_2}
\lim_{L\to\infty}\lim_{\epsilon\to0} \lim_{N\to\infty}
\E{\left|\mu_{N,\epsilon}^\gamma\left(w\1_{\mathrm{A}_{L,M_N}^{\alpha*}(Z^N)}\right) \right|} 
& =0 . 
\end{align}  
 By definition of the probability  measure
 $\mathbb{P}^{u}_{N, \epsilon}$,  one has for any $\epsilon>0$, 
  $$
\E{\big| \mu_{N,\epsilon}^\gamma\left(w\1_{\mathrm{A}_{L,M_N}^{\alpha*}(Z^N)}\right) \big| }   \le \int_{\A}\mathbb{P}^{u}_{N, \epsilon}\left[\mathrm{A}_{L,M_N}^{\alpha*}(Z^N(u))\right]  \E{e^{ \X_{N,\epsilon}(u)}} |w(u)| du .
$$
Note that by Assumption \ref{exp_moments}, $ \E{e^{ \X_{N,\epsilon}(u)}}\to 1$ as $N\to\infty$ and $\epsilon\to 0$ uniformly for all $u\in\A$, so that by Lemma~\ref{thm:GT_1}, one obtains \eqref{limit_2}. Finally, \eqref{limit_1} follows from an analogous argument using the estimate of Lemma~\ref{thm:GT_2}. \qed\\

In principle, it is possible to verify Assumption~\ref{cvg} without knowing the asymptotics of all exponential moments of the random field $X_{N,\epsilon}(u)$. 
However, if the Assumption \ref{exp_moments} holds for all $q\in\N$, then the  Assumption~\ref{cvg} follows from a routine computation. 
 In particular, as a consequence of  the asymptotics obtained in Section~\ref{sect:covariance} and Section~\ref{sect:asymptotics}, this applies to  the CUE and sine process. \\

\noindent{\it Proof of Theorem~\ref{th:GMCgeneral}.}
It suffices to check that the fields in question satisfy the assumptions of Theorem~\ref{thm:L^1_phase}. 
First of all, the strong Gaussian asymptotics \eqref{qmoms} implies directly Assumption~\ref{exp_moments}  as well as the fact that, for fixed $\epsilon>0$,  the field $u\mapsto X_{N,\epsilon}(u)$ converges in the sense of finite-dimensional distributions to a mean-zero Gaussian process $G_\epsilon$. In addition, one has for any $\mathbf{t}\in\R^q$ and $\u\in\{u,v\}^q$, 
\begin{align*}
&\mathbf{E}_{\mathbb{P}^{u,v}_{N, \epsilon_N}}\left[ \exp\left( \sum_{k=1}^{q} t_k
 X_{N, e^{-k}}(u_k) \right) \right] \\
& \hspace{2cm}=\frac{ \E{ \exp\left( \gamma X_{N,\delta_N}(u)+ \gamma X_{N,\delta_N}(v)
+\sum_{k=1}^{q} t_k X_{N, e^{-k}}(u_k) \right) }}{\E{\exp\big( \gamma X_{N,\delta_N}(u)+\gamma X_{N,\delta_N}(v)\big) }} \\
&\hspace{2cm}= \exp\left( \sum_{k=1}^{q} t_k \mathbf{E}_{\mathbb{G}_{u,v}}[Z_k(u_k)]
+   \frac{1}{2} \sum_{ k, j=1}^q  t_kt_j \langle Z_k(u_k);Z_j(u_j) \rangle_{\mathbb{G}_{u,v}}
  +   \underset{N\to\infty}{o(1)}   \right) 
\end{align*}
according to \eqref{G_1}--\eqref{G_2}. This establishes, the first part of the Assumption~\ref{cvg}; the other assertions in the weak and mixed regimes are checked in a similar fashion. Finally, when $G_\epsilon = G\,\ast\,\phi_{\epsilon}$, we check in Section~\ref{sect:covariance} that
the covariance kernel satisfies both Assumptions~\ref{finite_dist} and \ref{kernel_asymp} (the computation is given for the stationary field $G$ with correlation kernel \eqref{cov} but they can be easily generalized in other situations -- see for instance \cite{Berestycki15}). 
In this context, the condition~\ref{GMC_existence} (which, in general, is a non-trivial issue) follows from Theorem~\ref{th:GMCintro} presented in the introduction.
\qed \\

\noindent{\it Proof of Theorem~\ref{thm:sine_GMC}.}
If $\phi\in\mathcal{D}$ is a mollifier which satisfies Assumption~\ref{phi_assumption},
formula \eqref{full_asymp} shows that the random field $X_{N,\epsilon}$ given by  \eqref{X_field} satisfies the strong Gaussian asymptotics of Assumption~\ref{exp_moments}  with $G_\epsilon = G\,\ast\,\phi_{\epsilon}$. In this context, by  Theorem~\ref{th:GMCintro}, we know that for any  $w \in L^{1}(\R)\cap L^{\infty}(\R)$ and $\gamma <\sqrt{2d}$, the random variable 
$\nu^{\gamma}_{\epsilon}(w)$ converges in $L^1(\mathbb{P})$ to  $\nu^{\gamma}(w) $ as $\epsilon \to 0$. 
In addition, we  have already checked (see the proof of Theorem~\ref{th:GMCgeneral} above) that the Assumptions~\ref{finite_dist}--\ref{cvg}  are also satisfied. Then, by Lemma~\ref{thm:L^q_weak}, for any $\epsilon>0$, $\mu_{N,\epsilon}^{\gamma,\mathrm{Sine}}(w)$ converges in $L^1(\mathbb{P})$ to some random variable  $\mu^\gamma_\epsilon(w)$
and, since $\mu^\gamma_\epsilon(w) \overset{d}{=} \nu^\gamma_\epsilon(w)$,  this shows that for any  $\gamma <\sqrt{2d}$, there exists a random variable $\mu^\gamma(w) \overset{d}{=} \nu^\gamma(w)$ so that by taking successive limits, we obtain
\begin{equation} \label{limit_3}
\lim_{\epsilon \to 0} \lim_{N\to\infty} \E { \left| \mu_{N,\epsilon}^{\gamma,\mathrm{Sine}}(w) - \mu^\gamma(w) \right| } =0 . 
\end{equation}
Then, using Lemmas~\ref{thm:GT_1}, \ref{thm:GT_2} and Proposition~\ref{thm:L^1_Cauchy}, proceeding exactly as in the proof of Theorem~\ref{thm:L^1_phase}, we can show that if $\gamma <\sqrt{2d}$ and $\epsilon_N\ge \delta_N N (\log N)^{1+\kappa}$ for some $\kappa>0$, we have
\begin{equation} \label{limit_4}
 \lim_{\epsilon\to0} \lim_{N\to\infty}\E{\left| \mu_{N,\epsilon_N}^{\gamma,\mathrm{Sine}}(w) -\mu_{N,\epsilon}^{\gamma,\mathrm{Sine}}(w) \right|}   =0 . 
 \end{equation}
 Using the triangle inequality and combining the limits \eqref{limit_3} and \eqref{limit_4}, we conclude that the random variable $ \mu_{N,\epsilon_N}^{\gamma,\mathrm{Sine}}(w) $ converges in $L^1(\mathbb{P})$ to $\mu^{\gamma}(w)$  as $N\to\infty$ and  $\mu^\gamma(w) \overset{d}{=} \nu^\gamma(w)$. \qed

\subsection{Asymptotics of the covariances} \label{sect:covariance}

Let $G$ be the stationary Gaussian process on $\R$ with covariance function
$Q$, \eqref{cov}, and recall that, for any mollifier $\phi$, we denote $G_{\phi, \epsilon} = G*\phi_\epsilon$.  
In this section, we derive the asymptotics of the covariance 
\begin{equation} \label{covariance_4}
\E{G_{\phi, \epsilon}(u)G_{\psi, \delta}(v)}
=\int_\R e^{-\pii (u-v)\kappa} \hat\psi(\epsilon\kappa) \overline{\hat\phi(\delta\kappa)} \widehat{Q}(\kappa)  d\kappa  , 
\end{equation}
 as $\epsilon, \delta\to0$, for all $u,v\in\R$
and for a large class of mollifiers. This is relevant to check that the conditions \eqref{assumption_2}, \eqref{domination}, and \eqref{assumption_5} are satisfied and apply Theorems~\ref{thm:L^2_phase} and~\ref{thm:L^1_phase} to conclude that the multiplicative chaos measures associated with counting statistics of the CUE and sine process exist and have the same law. For any $\ell, \epsilon>0$, we define the function
\begin{equation} \label{Q_epsilon}
Q_\epsilon(x) = - \log\left(\frac{\epsilon}{2\pi}\vee|x|\right)  
+  \log\left(\frac{\epsilon}{2\pi}\vee \sqrt{|\ell^2-x^2|} \right) . 
\end{equation}
In the following, we will use the notation $\Oo{\epsilon\to0}{\u}$ to specify a function of the variable $\u$ and the parameter $\epsilon>0$ which is uniformly bounded (by a universal constant) and converges to $0$ as $\epsilon\to0$ for almost all $\u\in\R^q$.\\

Recall that we defined
$\mathcal{D} = \bigcup_{\alpha>0} \mathcal{D}_\alpha$, c.f.~\eqref{D}, 
and recall also the definition of the cosine integrals:
\begin{equation*}
\Cin(\omega)  = \int_{0}^{\omega}\frac{1-\cos( z)}{z}\,dz 
\quad\text{ and }\quad
\Ci(x) = \int_{1}^{\infty}\frac{\cos( x t)}{t} dt . 
\end{equation*}
The function $\Cin$ is entire, while the function $\Ci$ is even on $\R$ with the value $+\infty$ at 0,  and it turns out that for any $\omega\in\R\backslash\{0\}$,
\begin{equation*}
\Cin(\omega) = \Ci(\omega) +\log | \omega|+\gamma,
\end{equation*}
where $\gamma$ is the Euler constant.  In particular, since 
$\displaystyle\lim_{x\to\infty} \Ci(x) =0$, we have for any $\omega \in\R$, 
\begin{equation} \label{Cin_asymptotics}
\Cin(\omega/\epsilon) = \log^+(\omega/\epsilon) + \gamma+ \Oo{\epsilon\to0}{\omega} . 
\end{equation}

\begin{lemma}\label{thm:Cin}
Let $\Phi \in L^1(\R_+)$, continuous with $\Phi(0)=1$, and so that  the function $ \kappa\mapsto\frac{\Phi(\kappa)-\1_{\kappa\le1}}{\kappa}$ is integrable on $\R_+$.  
Then, the function 
\begin{equation*} 
\mathscr{E}_\Phi(\omega) = \int_{0}^{\infty}(1-\cos( \omega\kappa))\frac{\Phi(\kappa)-\1_{\kappa\le1}}{\kappa}\,d\kappa 
\end{equation*}
is continuously differentiable on $\R$, and
$\displaystyle
\lim_{\omega\to\infty} \mathscr{E}_\Phi(\omega) =  \int_{0}^{\infty}\frac{\Phi(\kappa)-\1_{\kappa\le1}}{\kappa}\,d\kappa. 
$\\\\
Moreover, for all $\omega\in\R$, 
\begin{equation} \label{Cin}
 \int_{0}^{\infty}\frac{1-\cos( \omega\kappa)}{\kappa}\Phi(\kappa)\,d\kappa = \Cin(\omega) + \mathscr{E}_\Phi(\omega) .
\end{equation}
\end{lemma}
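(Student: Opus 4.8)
The plan is to dispatch the three assertions in turn; each reduces to a routine application of dominated convergence and the Riemann--Lebesgue lemma once the integrability hypotheses are unpacked. Throughout, write $\psi(\kappa) = \dfrac{\Phi(\kappa)-\1_{\kappa\le1}}{\kappa}$, so that $\psi\in L^1(\R_+)$ by hypothesis, and note that $\Phi-\1_{\kappa\le1} = \kappa\,\psi \in L^1(\R_+)$ as well since $\Phi\in L^1(\R_+)$. The reason for working with $\psi$ rather than $\Phi/\kappa$ directly, which need not be integrable near the origin, is precisely to keep the bookkeeping of absolute versus conditional convergence clean; this is the only point in the argument that requires any genuine care, and even it is elementary.

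First I would establish the identity \eqref{Cin}. The integral on its left-hand side converges absolutely: near $\kappa=0$ the factor $\tfrac{1-\cos(\omega\kappa)}{\kappa}$ is bounded (it vanishes like $\tfrac{\omega^2}{2}\kappa$) and $\Phi$ is continuous, hence locally bounded, while on $(1,\infty)$ one has $\big|\tfrac{1-\cos(\omega\kappa)}{\kappa}\Phi(\kappa)\big| = |1-\cos(\omega\kappa)|\,|\psi(\kappa)| \le 2|\psi(\kappa)|$. Splitting $\Phi = (\Phi - \1_{\kappa\le1}) + \1_{\kappa\le1}$, the contribution of the first summand is exactly $\mathscr{E}_\Phi(\omega)$ (its integrand is dominated by $2|\psi|\in L^1(\R_+)$, which also shows $\mathscr{E}_\Phi$ is well defined), and the contribution of the second is $\int_0^1 \tfrac{1-\cos(\omega\kappa)}{\kappa}\,d\kappa$, which after the substitution $z=|\omega|\kappa$ (valid for $\omega\neq0$) equals $\Cin(\omega)$ by evenness of $\Cin$; the case $\omega=0$ is trivial since all three quantities vanish there.

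For the regularity claim, I would differentiate under the integral sign in the definition of $\mathscr{E}_\Phi$: by the mean value theorem $|\cos a - \cos b|\le|a-b|$, so the difference quotient in $\omega$ of $\kappa\mapsto(1-\cos(\omega\kappa))\psi(\kappa)$ is bounded in modulus by $\kappa|\psi(\kappa)| = |\Phi(\kappa)-\1_{\kappa\le1}|\in L^1(\R_+)$, and dominated convergence yields $\mathscr{E}_\Phi'(\omega) = \int_0^\infty \sin(\omega\kappa)\big(\Phi(\kappa)-\1_{\kappa\le1}\big)\,d\kappa$; a second application of dominated convergence (the integrand being bounded by the $\omega$-independent integrable function $|\Phi - \1_{\kappa\le1}|$) shows this derivative is continuous, so $\mathscr{E}_\Phi\in C^1(\R)$. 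Finally, for the limit at infinity, write $\mathscr{E}_\Phi(\omega) = \int_0^\infty \psi(\kappa)\,d\kappa - \int_0^\infty \cos(\omega\kappa)\psi(\kappa)\,d\kappa$; since $\psi\in L^1(\R_+)$ the Riemann--Lebesgue lemma forces the second term to $0$, leaving $\lim_{\omega\to\infty}\mathscr{E}_\Phi(\omega) = \int_0^\infty \tfrac{\Phi(\kappa)-\1_{\kappa\le1}}{\kappa}\,d\kappa$, as claimed.
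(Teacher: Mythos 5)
Your proof is correct and follows the same route the paper takes, merely filling in the (brief) details the authors leave implicit: the differentiation under the integral sign with the dominating function $|\Phi-\1_{\kappa\le 1}|\in L^1(\R_+)$, the Riemann--Lebesgue lemma for the limit, and the split $\Phi = (\Phi-\1_{\kappa\le1})+\1_{\kappa\le1}$ together with the change of variables $z=|\omega|\kappa$ for the identity. Nothing to change.
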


\proof All the properties of the function $\mathscr{E}_\Phi$ are easy to check. In particular, we have
\begin{equation*} 
\mathscr{E}_\Phi'(\omega) = \int_{0}^{\infty}\sin(\omega \kappa )\left(\Phi(\kappa)-\1_{\kappa\le1}\right)d\kappa ,
\end{equation*}
and the limit of  $\mathscr{E}_\Phi(\omega)$ as $\omega\to\infty$ follows directly from the Riemann-Lebesgue  lemma. Finally, the identity \eqref{Cin} is an immediate consequence of the definition of the cosine integral.  \qed\\

\begin{proposition}\label{thm:covariance}

For any functions $\phi,\psi\in\mathcal{D}$, we have for all $u,v\in\R$,
$$
\E{G_{\phi, \epsilon}(u)G_{\psi, \epsilon}(v)}= Q_\epsilon(u-v) + \Oo{\epsilon\to0}{u,v}. 
 $$
\end{proposition}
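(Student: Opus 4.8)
The strategy is to reduce everything to Lemma~\ref{thm:Cin}. Taking $\delta=\epsilon$ in the spectral formula \eqref{covariance_4} and inserting \eqref{Q_hat}, and writing $t=u-v$, the starting point is
\begin{equation*}
\E{G_{\phi, \epsilon}(u)G_{\psi, \epsilon}(v)} = \int_\R e^{-\pii t\kappa}\, \hat\psi(\epsilon\kappa)\,\overline{\hat\phi(\epsilon\kappa)}\, \frac{\sin^{2}(\pi\ell\kappa)}{|\kappa|}\, d\kappa .
\end{equation*}
Since $\phi,\psi$ are real-valued, the integrand $f$ satisfies $f(-\kappa)=\overline{f(\kappa)}$, so the integral is real; folding it to $\R_+$ and using $2\sin^{2}(\pi\ell\kappa)=1-\cos(2\pi\ell\kappa)$ I would rewrite it as
\begin{equation*}
\E{G_{\phi, \epsilon}(u)G_{\psi, \epsilon}(v)} = \int_{0}^{\infty} \frac{1-\cos(2\pi\ell\kappa)}{\kappa}\Bigl( A(\epsilon\kappa)\cos(2\pi t\kappa) + B(\epsilon\kappa)\sin(2\pi t\kappa) \Bigr)\, d\kappa ,
\end{equation*}
where $A(\xi)=\mathrm{Re}[\hat\psi(\xi)\overline{\hat\phi(\xi)}]$ and $B(\xi)=\mathrm{Im}[\hat\psi(\xi)\overline{\hat\phi(\xi)}]$, so that $A(0)=1$ and $B(0)=0$.

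Next I would apply the product-to-sum identities together with the crucial regrouping
\begin{equation*}
(1-\cos 2\pi\ell\kappa)\cos 2\pi t\kappa = -\,(1-\cos 2\pi t\kappa) + \tfrac12(1-\cos 2\pi(t-\ell)\kappa) + \tfrac12(1-\cos 2\pi(t+\ell)\kappa) ,
\end{equation*}
whose weights $-1,\tfrac12,\tfrac12$ sum to zero (reflecting that the left side vanishes to second order at $\kappa=0$), as well as $(1-\cos 2\pi\ell\kappa)\sin 2\pi t\kappa = \sin 2\pi t\kappa - \tfrac12\sin 2\pi(t+\ell)\kappa - \tfrac12\sin 2\pi(t-\ell)\kappa$. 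Each of the resulting six integrals converges absolutely for fixed $\epsilon>0$ — the surviving $1-\cos$ (resp.~$\sin$) factor tames $\kappa=0$, while $\hat\psi\overline{\hat\phi}\in L^{1}(\R)$ (Cauchy--Schwarz, since $\mathcal D\subset L^{1}\cap L^{2}$) tames large $\kappa$ — so the splitting is legitimate, giving three ``cosine'' integrals $\int_{0}^{\infty}\frac{1-\cos 2\pi\omega\kappa}{\kappa}A(\epsilon\kappa)\,d\kappa$ and three ``sine'' integrals $\int_{0}^{\infty}\frac{\sin 2\pi\omega\kappa}{\kappa}B(\epsilon\kappa)\,d\kappa$, with $\omega\in\{t,\,t-\ell,\,t+\ell\}$ and weights $-1,\tfrac12,\tfrac12$. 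For the cosine integrals the change of variable $\kappa\mapsto\epsilon\kappa$ turns the $\omega$-term into $\int_{0}^{\infty}\frac{1-\cos(2\pi\omega\kappa/\epsilon)}{\kappa}A(\kappa)\,d\kappa$, to which Lemma~\ref{thm:Cin} applies with $\Phi=A$; its hypotheses ($A\in L^{1}(\R_+)$, $A$ continuous, $A(0)=1$, and $\kappa\mapsto\frac{A(\kappa)-\1_{\kappa\le1}}{\kappa}$ integrable) are checked using $\phi,\psi\in\mathcal D$ via the Hölder bound $|\hat\phi(\kappa)-1|\ll|\kappa|^{\alpha}\!\int|x|^{\alpha}\phi(x)\,dx$ (and likewise for $\psi$), which yields $|A(\kappa)-1|\ll|\kappa|^{\alpha}$ near $0$, large $\kappa$ being handled by $A\in L^{1}$. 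Then \eqref{Cin}, \eqref{Cin_asymptotics}, the boundedness of $\mathscr E_A$ together with $\mathscr E_A(2\pi\omega/\epsilon)\to\mathscr E_A(\infty)$ for $\omega\ne0$, and the exact identity $\log^{+}(2\pi|\omega|/\epsilon)=\log\frac{2\pi}{\epsilon}+\log(\frac{\epsilon}{2\pi}\vee|\omega|)$ give, for a suitable constant $c_A$,
\begin{equation*}
\int_{0}^{\infty}\frac{1-\cos(2\pi\omega\kappa/\epsilon)}{\kappa}A(\kappa)\,d\kappa = \log\tfrac{2\pi}{\epsilon} + \log\bigl(\tfrac{\epsilon}{2\pi}\vee|\omega|\bigr) + c_A + \Oo{\epsilon\to0}{\omega}.
\end{equation*}
Summing over $\omega\in\{t,t-\ell,t+\ell\}$ with weights $-1,\tfrac12,\tfrac12$, the $\log\frac{2\pi}{\epsilon}$ and the $c_A$ terms cancel (zero-sum weights), leaving $-\log(\tfrac{\epsilon}{2\pi}\vee|t|) + \tfrac12\log(\tfrac{\epsilon}{2\pi}\vee|t-\ell|) + \tfrac12\log(\tfrac{\epsilon}{2\pi}\vee|t+\ell|) + \Oo{\epsilon\to0}{t}$, which a direct check against \eqref{Q_epsilon} identifies with $Q_\epsilon(t)$ up to $\Oo{\epsilon\to0}{t}$.

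For the sine integrals, after $\kappa\mapsto\epsilon\kappa$ the $\omega$-term is $\int_{0}^{\infty}\frac{\sin(2\pi\omega\kappa/\epsilon)}{\kappa}B(\kappa)\,d\kappa$; since $B(0)=0$ and $|B(\kappa)|\ll|\kappa|^{\alpha}$ near $0$ (same Hölder bound) while $B\in L^{1}(\R_+)$, the function $\kappa\mapsto B(\kappa)/\kappa$ lies in $L^{1}(\R_+)$, so by the Riemann--Lebesgue lemma this integral is bounded by $\|B(\cdot)/\cdot\|_{L^{1}}$ uniformly and tends to $0$ as $\epsilon\to0$ whenever $\omega\ne0$; hence each sine contribution, and so their weighted sum, is $\Oo{\epsilon\to0}{t}$. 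Combining the two parts yields the proposition; since only $|u-v|$ entered, the error depends on $(u,v)$ only through $t=u-v$. \emph{The main obstacle} is the bookkeeping of the logarithmic divergences: one must regroup so that every integral actually split off still carries a $1-\cos$ (or $\sin$) factor, so that Lemma~\ref{thm:Cin} applies and the $\log\frac1\epsilon$ contributions cancel through the zero-sum-of-weights mechanism. A secondary, more technical point is to certify that the remainder is $\Oo{\epsilon\to0}{u,v}$ in the strong sense (uniformly bounded and a.e.\ convergent), which is precisely where the moment condition $\phi,\psi\in\mathcal D$ is used — through the integrability of $\frac{A(\kappa)-\1_{\kappa\le1}}{\kappa}$ and of $\frac{B(\kappa)}{\kappa}$ — and where the comparison of $\tfrac12\log(\tfrac{\epsilon}{2\pi}\vee|t-\ell|)+\tfrac12\log(\tfrac{\epsilon}{2\pi}\vee|t+\ell|)$ with the second term of $Q_\epsilon$ must be carried out carefully near $t=\pm\ell$.
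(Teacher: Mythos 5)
Your proof is correct and takes essentially the same route as the paper's: fold to $\mathbb{R}_+$, split off $\Re$ and $\Im$ of $\hat\psi\,\overline{\hat\phi}$, use the same zero-sum trigonometric decomposition, reduce each piece to Lemma~\ref{thm:Cin} after the substitution $\kappa\mapsto\epsilon\kappa$, and dispose of the sine part via Riemann--Lebesgue. You are also right to flag the identification with $Q_\epsilon$ near $t=\pm\ell$ as delicate: the paper's one-line step ``by definition of $\log^+$ and $Q_\epsilon$'' hides a pointwise discrepancy of order $\log(1/\epsilon)$ on an $O(\epsilon)$-neighbourhood of $\{|t|=\ell\}$, which is harmless for the almost-everywhere statement actually used in assumption~\ref{kernel_asymp}, but is, strictly, a mild imprecision in the paper's use of $\Oo{\epsilon\to0}{u,v}$.
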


\proof
Let  $\Phi =\Re\big\{ \hat{\phi}\overline{\hat\psi} \big\}$, $\Psi =\Im\big\{ \hat{\phi}\overline{\hat\psi} \big\}$, and $\epsilon' = \epsilon/2\pi$. 
We claim that the function $\Phi$ satisfies the assumption of Lemma~\ref{thm:Cin}. 
Namely, by the Cauchy-Schwarz inequality,  $\Phi\in L^1(\R)$ and by Plancherel's formula, for any $\kappa\in\R$,  
$$
\Phi(\kappa) =  \iint_{\R^2} \cos(2\pi \kappa x) \phi(x+t) \psi(t) dxdt .
$$
In particular, the  bound $|e^{i \omega} - 1| \le 2  | \omega|^\alpha$ valid for all $\omega\in\R$ and $0\le \alpha\le1$  implies that 
\begin{align*}
\left| \frac{\Phi(\kappa)-1}{\kappa}\right| 
&\le 2 \pi  \kappa^{\alpha-1}  \iint_{\R^2} |x|^\alpha \phi(x+t) \psi(t) dxdt  \\
&\le 2\pi   \kappa^{\alpha-1} \int |x|^\alpha \phi(x)dx \int|t|^\alpha \psi(t) dt
\end{align*}
Since both $\phi,\psi\in\mathcal{D}_\alpha$ for some $\alpha>0$,  this shows that the function $ \frac{\Phi(\kappa)-\1_{\kappa\le1}}{\kappa}$ is integrable on~$\R_+$. 
Similarly, the function $ \Psi(\kappa)/\kappa$ is also integrable on $\R_+$.
By the Riemann-Lebesgue lemma, this implies that for any $\omega\in\R$,
\begin{equation}  \label{integral_2}
\int_{0}^{\infty}\sin( \epsilon\omega\kappa)\Psi(\kappa)\frac{d\kappa}{\kappa}
=\Oo{\epsilon\to0}{\omega}. 
\end{equation}
By  formula~\eqref{covariance_4} and by definition of  $\hat{Q}$, we have
 \begin{align}  \label{covariance_5}
& \E{G_{\phi, \epsilon}(u)G_{\psi, \epsilon}(v)} 
 = 2 \int_0^\infty\Re\left\{ e^{-\pii (u-v) \kappa} \hat{\phi}(\epsilon\kappa)\overline{\hat\psi(\epsilon\kappa)}\right\}   \frac{\sin^2(\pi\ell\kappa)}{\kappa}    d\kappa \\
&\label{covariance_6}
=  \underbrace{\int_0^\infty 2\cos\left(\frac{(u-v)\kappa}{\epsilon' }\right) \frac{\sin^2\left(\ell \kappa/2\epsilon'\right)}{\kappa} \Phi(\kappa) d\kappa}_{\displaystyle:= I_1(u,v,\epsilon')} 
+ \underbrace{\int_0^\infty 2\sin\left(\frac{(u-v)\kappa}{\epsilon' }\right) \frac{\sin^2\left(\ell \kappa/2\epsilon'\right)}{\kappa} \Psi(\kappa)d\kappa}_{\displaystyle:= I_2(u,v,\epsilon')} .
 \end{align}
Using the trigonometric identity
$$
-2\cos(a)\sin^2(b/2)  = 1-\cos a - \frac{1-\cos(b+a)+ 1- \cos(b-a)}{2} ,
$$
with $a=(u-v)\kappa/\epsilon' $ and $b=\ell\kappa/\epsilon'$, 
 we see that we can apply Lemma~\ref{thm:Cin}. We obtain for all $u,v\in\R$,
\begin{align}
I_1(u,v,\epsilon')
=&\notag -\Cin\left(\frac{u-v}{\epsilon'}\right) +
\frac{1}{2} \left\{ \Cin\left(\frac{\ell+u-v}{\epsilon'}\right)
+\Cin\left(\frac{\ell+v-u}{\epsilon'}\right) \right\} \\
&\label{error_3}\qquad
- \mathscr{E}_\Phi\left(\frac{u-v}{\epsilon'}\right) +
\frac{1}{2} \left\{  \mathscr{E}_\Phi\left(\frac{\ell+u-v}{\epsilon'}\right)
+ \mathscr{E}_\Phi\left(\frac{\ell+v-u}{\epsilon'}\right) \right\} , 
\end{align}
and the terms in \eqref{error_3} combine as the error term $\Oo{\epsilon\to0}{u,v}$. Moreover, by formula \eqref{Cin_asymptotics}, this implies that
\begin{equation*}
I_1(u,v,\epsilon')
= -\log^+\left(\frac{u-v}{\epsilon'}\right) +
\frac{1}{2} \left\{ \log^+\left(\frac{\ell+u-v}{\epsilon'}\right)
+\log^+\left(\frac{\ell+v-u}{\epsilon'}\right) \right\} 
+ \Oo{\epsilon\to0}{u,v} ,
\end{equation*}
By definition of the functions $\log^+$ and $Q_\epsilon$, \eqref{Q_epsilon}, this may be written as
\begin{equation} \label{integral_1}
I_1(u,v,\epsilon')
= Q_\epsilon(u-v) + \Oo{\epsilon\to0}{u,v}. 
\end{equation}
We can also evaluate the integral $I_2(u,v,\epsilon')$ using a similar argument. Since
$$
2\sin(a)\sin^2(b/2)  =  \sin a - \sin(a+b)/2  +\sin(a-b)/2 ,
$$
the estimate \eqref{integral_2} shows that $I_2(u,v,\epsilon')
= \Oo{\epsilon\to0}{u,v}. $ Combining this fact with \eqref{integral_1}
and formula~\eqref{covariance_6}, this completes the proof.  \qed\\

\begin{proposition} \label{thm:mixed_covariance}
Let $\phi,\psi\in\mathcal{D}$, we have for all $u,v\in\R$,
$$
\E{G_{\phi, \epsilon}(u)G_{\psi, \delta}(v)} = Q_{\epsilon\vee\delta}(u-v) 
 + \Oo{\delta, \epsilon\to0}{u,v} , 
 $$
 where we can consider the limit as the parameters $\epsilon$ and $\delta$ converge to $0$ in an arbitrary way. 
\end{proposition}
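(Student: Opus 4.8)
The proof runs the argument of Proposition~\ref{thm:covariance} while keeping track of the ratio of the two regularisation scales. Since $\E{G_{\phi,\epsilon}(u)G_{\psi,\delta}(v)}$ is unchanged under exchanging $(\phi,\epsilon,u)$ with $(\psi,\delta,v)$ and $Q_{\epsilon\vee\delta}$ is even, I may assume $\delta\ge\epsilon$, put $r=\epsilon/\delta\in(0,1]$ and $\delta'=\delta/2\pi$. Starting from \eqref{covariance_4} and performing the substitution $\kappa\mapsto\kappa/\delta$ exactly as in \eqref{covariance_5}--\eqref{covariance_6}, one gets $\E{G_{\phi,\epsilon}(u)G_{\psi,\delta}(v)}=I_1(u,v,\delta')+I_2(u,v,\delta')$ with $I_1,I_2$ of precisely the form appearing in \eqref{covariance_6}, but with $\Phi$ and $\Psi$ replaced by $\Phi_r:=\Re\{\hat\phi(r\,\cdot)\overline{\hat\psi}\}$ and $\Psi_r:=\Im\{\hat\phi(r\,\cdot)\overline{\hat\psi}\}$.

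For each fixed $r>0$ the pair $(\Phi_r,\Psi_r)$ obeys all the hypotheses used in Proposition~\ref{thm:covariance}: $\Phi_r(0)=1$, $\Phi_r\in L^1(\R_+)$ by Cauchy--Schwarz ($\hat\phi(r\,\cdot),\hat\psi\in L^2$), while the bounds $|\hat\phi(r\kappa)-1|\ll(r\kappa)^\alpha$ and $|\hat\psi(\kappa)-1|\ll\kappa^\beta$ (valid since $\phi\in\mathcal{D}_\alpha$, $\psi\in\mathcal{D}_\beta$) together with $|\hat\phi|,|\hat\psi|\le1$ show that $g_r:=\tfrac{\Phi_r-\1_{\kappa\le1}}{\kappa}$ and $\Psi_r/\kappa$ lie in $L^1(\R_+)$. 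Applying the trigonometric identities of Proposition~\ref{thm:covariance} together with Lemma~\ref{thm:Cin} verbatim, the $\Cin$-contribution to $I_1$ does not involve $r$ and reproduces, via \eqref{Cin_asymptotics} and the definition \eqref{Q_epsilon}, the quantity $Q_\delta(u-v)+\Oo{\delta\to0}{u,v}$ exactly as in \eqref{integral_1}. All dependence on $r$ is thus confined to the error functions $\mathscr{E}_{\Phi_r}$ occurring in \eqref{error_3} (and to the analogous $\mathscr{E}_{\Psi_r}$-term produced by $I_2$), so it remains to check that these combine into $\Oo{\epsilon,\delta\to0}{u,v}$ uniformly in $r\in(0,1]$.

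This uniformity in $r$ is the one genuinely new point. First, the estimates above give $\sup_{r\in(0,1]}\|g_r\|_{L^1(\R_+)}<\infty$: near $0$ one bounds $|g_r(\kappa)|\ll\kappa^{\alpha-1}+\kappa^{\beta-1}$ (using $r\le1$), and for $\kappa>1$ one has $|g_r(\kappa)|\le|\hat\psi(\kappa)|/\kappa$ with $\hat\psi/\kappa\in L^1(1,\infty)$ by Cauchy--Schwarz; hence $|\mathscr{E}_{\Phi_r}(\omega)|\le2\|g_r\|_{L^1}$ is bounded uniformly in $r$ and $\omega$ (and likewise for $\mathscr{E}_{\Psi_r}$). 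Second, $r\mapsto g_r$ is continuous from $[0,1]$ into $L^1(\R_+)$, where $g_0:=\tfrac{\Re\hat\psi-\1_{\kappa\le1}}{\kappa}$: this follows by dominated convergence, dominating $|g_r-g_{r_0}|$ by $C\kappa^{\alpha-1}$ on $(0,1)$ and by $2|\hat\psi(\kappa)|/\kappa$ on $(1,\infty)$ and using continuity of $\hat\phi$. Consequently $\{g_r:r\in[0,1]\}$ is a compact subset of $L^1(\R_+)$, so the Riemann--Lebesgue lemma holds uniformly over it: $\sup_{r}\big|\mathscr{E}_{\Phi_r}(\omega)-L_{\Phi_r}\big|\to0$ as $|\omega|\to\infty$, where $L_{\Phi_r}:=\int_0^\infty g_r$. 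Since $\mathscr{E}_{\Phi_r}$ is even and, for almost every fixed $(u,v)$, the three arguments $\tfrac{u-v}{\delta'}$, $\tfrac{\ell+u-v}{\delta'}$, $\tfrac{\ell+v-u}{\delta'}$ all tend to $\pm\infty$ as $\delta'\to0$, the combination $-\mathscr{E}_{\Phi_r}(\tfrac{u-v}{\delta'})+\tfrac12\mathscr{E}_{\Phi_r}(\tfrac{\ell+u-v}{\delta'})+\tfrac12\mathscr{E}_{\Phi_r}(\tfrac{\ell+v-u}{\delta'})$ tends to $\big(-1+\tfrac12+\tfrac12\big)L_{\Phi_r}=0$, uniformly in $r$; the same reasoning gives $I_2(u,v,\delta')=\Oo{\delta\to0}{u,v}$ uniformly in $r$. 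Collecting the contributions yields $\E{G_{\phi,\epsilon}(u)G_{\psi,\delta}(v)}=Q_\delta(u-v)+\Oo{\epsilon,\delta\to0}{u,v}=Q_{\epsilon\vee\delta}(u-v)+\Oo{\epsilon,\delta\to0}{u,v}$, and the case $\epsilon\ge\delta$ follows by symmetry. The main obstacle is precisely this uniformity in the scale ratio $r=(\epsilon\wedge\delta)/(\epsilon\vee\delta)$, handled by the $L^1$-compactness of $\{g_r\}$ and the ensuing uniform Riemann--Lebesgue estimate, whereas in Proposition~\ref{thm:covariance} the corresponding error terms could be treated pointwise in a single fixed mollifier pair.
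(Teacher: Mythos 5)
Your argument is correct and takes a genuinely different route from the paper's. The paper reduces (by symmetry and, implicitly, by sequential compactness of $[0,1]$) to the case $\delta\le\epsilon$ with $\delta/\epsilon\to\beta\in[0,1]$, and then compares $\E{G_{\phi,\epsilon}(u)G_{\psi,\delta}(v)}$ with $\E{G_{\phi,\epsilon}(u)G_{\psi,\beta\epsilon}(v)}$ through the H\"older-type bound $|\hat\psi(\kappa\beta)-\hat\psi(\kappa\delta/\epsilon)|\ll|\kappa|^\alpha|\beta-\delta/\epsilon|^\alpha$ combined with integrability of $|\hat\phi(\kappa)|\kappa^{\alpha-1}$; the case $\beta>0$ then follows from Proposition~\ref{thm:covariance} applied to the rescaled mollifier $\psi_\beta$, while $\beta=0$ requires a brief separate treatment. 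You instead keep the ratio $r=\epsilon/\delta$ as a free parameter inside the symbols $\Phi_r,\Psi_r$ and obtain the required uniformity in $r$ directly, by proving that $r\mapsto g_r$ is a continuous map $[0,1]\to L^1(\R_+)$ (dominated convergence together with the same $\mathcal{D}_\alpha$-moment control on $\hat\phi$ near $0$ and a Cauchy--Schwarz tail bound), so that $\{g_r\}_{r\in[0,1]}$ is $L^1$-compact, and then invoking the uniform Riemann--Lebesgue lemma over norm-compact $L^1$-families; the prefactors $-1,\tfrac12,\tfrac12$ in the $\Cin$/$\mathscr{E}$-decomposition sum to $0$, so the $r$-dependent constant $L_{\Phi_r}$ cancels and only the uniform tail decay matters. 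Both proofs ultimately rest on the same moment estimates granted by $\phi,\psi\in\mathcal{D}$. Yours avoids the hidden subsequence extraction and handles every limiting behaviour of the ratio in a single pass, at the cost of a somewhat more elaborate uniform-Riemann--Lebesgue verification; the paper's version is a bit shorter because it off-loads the hard work onto the already-proved Proposition~\ref{thm:covariance}. The genuinely new steps you supply (uniform $L^1$-boundedness of $g_r$, $L^1$-continuity in $r$, and the uniform Riemann--Lebesgue lemma on the resulting compact family) are all sound.
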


\proof

We let $\epsilon'=\epsilon/2\pi$ and $\delta' = \delta /\epsilon$. Without loss of generality, we suppose that $\delta' \to \beta \in [0,1]$ as $\epsilon\to0$ (in particular, $\beta=0$ if we consider successive limits as $\delta \to0$ and $\epsilon\to0$). By formula \eqref{covariance_5}, we have
\begin{equation*} 
\E{G_{\phi, \epsilon}(u)G_{\psi, \delta}(v)}
= 2\int_0^\infty \Re\left\{  e^{-(u-v)\kappa/\epsilon'} \hat\phi(\kappa) \overline{\hat\psi(\kappa\delta')} \right\}\frac{\sin^2(\ell\kappa/2\epsilon')}{\kappa}    d\kappa  . 
\end{equation*}
In particular, this implies that
\begin{equation*} 
\left| \E{G_{\phi, \epsilon}(u)G_{\psi, \delta}(v)}-
\E{G_{\phi, \epsilon}(u)G_{\psi, \epsilon\beta}(v)} \right|
\le 2 \int_0^\infty \left| \hat\psi(\kappa\beta)- \hat\psi(\kappa\delta') \right| 
\big|\hat\phi(\kappa)\big| \frac{d\kappa}{\kappa} . 
\end{equation*}
Since 
$\displaystyle \left| \hat\psi(\kappa\beta)- \hat\psi(\kappa\delta') \right|  \le 4\pi |\kappa|^\alpha |\beta- \delta'|^\alpha \int |x|^\alpha \psi(x) dx$ for any $0\le \alpha\le1$, there exists a constant $C>0$ which depends only on $\psi \in \mathcal{D}_\alpha$ so that if $0<\alpha<1/2$ is sufficiently small, then
\begin{equation} \label{covariance_7}
\left| \E{G_{\phi, \epsilon}(u)G_{\psi, \delta}(v)}-
\E{G_{\phi, \epsilon}(u)G_{\psi, \epsilon\beta}(v)} \right|
\le C  |\beta- \delta'|^\alpha \int_0^\infty \big|\hat\phi(\kappa)\big| \frac{d\kappa}{\kappa^{1-\alpha}} . 
\end{equation}  
Since $\phi\in L^1\cap L^2(\R)$, by the Cauchy-Schwarz inequality, the RHS of \eqref{covariance_7} is finite and converges to 0 as $\epsilon\to0$ (by assumption, $\lim\delta'=\beta$).  Thus, it suffices to prove that given $\beta\in[0,1]$, 
\begin{equation} \label{covariance_8}
\E{G_{\phi, \epsilon}(u)G_{\psi, \epsilon\beta}(v)} = Q_\epsilon(u-v) 
 + \Oo{\epsilon\to 0}{u,v} .  
\end{equation}
If $\beta >0$,  this follows directly from Lemma~\ref{thm:covariance}, since 
$G_{\psi, \epsilon\beta} = G_{\psi_\beta, \epsilon}$. If $\beta=0$, since $\hat{\psi}(0)=1$, we have 
\begin{align*}
\E{G_{\phi, \epsilon}(u)G_{\psi, \epsilon\beta}(v)}
=\int_0^\infty \Re\left\{  e^{-(u-v)\kappa/\epsilon'} \hat\phi(\kappa) \right\}\frac{\sin^2(\ell\kappa/2\epsilon')}{\kappa}    d\kappa ,
\end{align*}
and the same computations as in the proof of Proposition~\ref{thm:covariance} shows that the function $\Re\hat\phi$ satisfies the assumptions of Lemma~\ref{thm:Cin} 
and the function $\kappa\mapsto\Im\hat\phi(\kappa)/\kappa$ is integrable on $\R_+$, so that the asymptotics \eqref{covariance_7} hold. \qed\\

\begin{corollary} \label{covariance_assumptions}
Let $\ell>0$, $G$ be the stationary Gaussian process on $\R$ with covariance function \eqref{cov}, and let  $\phi\in\mathcal{D}$. For any $u,v\in\R$, let $T(u,v)=Q(u-v)$, and for any $\epsilon,\delta>0$, define
$$
T_{\epsilon,\delta}(u,v) =  \E{G_{\phi, \epsilon}(u)G_{\phi, \delta}(v)}. 
$$
Then, the  function $T_{\epsilon,\delta}$ satisfies Assumption~\ref{kernel_asymp}. 
\end{corollary}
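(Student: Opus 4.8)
\proof
The plan is to read off the three parts of Assumption~\ref{kernel_asymp} directly from Proposition~\ref{thm:mixed_covariance} applied with $\psi=\phi$. That proposition gives, for the stationary field $G$,
\begin{equation*}
T_{\epsilon,\delta}(u,v) = Q_{\epsilon\vee\delta}(u-v) + \Oo{\epsilon,\delta\to0}{u,v}
\end{equation*}
as $\epsilon,\delta\to0$ in an arbitrary way, where the error term is bounded by a universal constant and tends to $0$ for almost every $(u,v)$, and $Q_\eta$ is the explicit function \eqref{Q_epsilon}. Since $\epsilon^{-1}\wedge\delta^{-1}=(\epsilon\vee\delta)^{-1}$ and $Q_\eta$ is even, and since by the covariance symmetry $T_{\epsilon,\delta}(u,v)=T_{\delta,\epsilon}(v,u)$ we may assume $\delta\ge\epsilon$ throughout (the degenerate case $\epsilon=0$ being obtained by letting $\epsilon\to0$), everything reduces to three elementary facts about $Q_\eta$ together with absorbing the uniformly bounded error into constants.

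First I would check the convergence \eqref{assumption_2}: since $G$ is stationary, $T(u,v)=Q(u-v)$, and for each fixed $x\notin\{0,\pm\ell\}$ one has $\tfrac{\eta}{2\pi}\vee|x|\to|x|$ and $\tfrac{\eta}{2\pi}\vee\sqrt{|\ell^2-x^2|}\to\sqrt{|\ell^2-x^2|}$ as $\eta\to0$, hence $Q_\eta(x)\to Q(x)$. The exceptional set $\{x\in\{0,\pm\ell\}\}$ pulls back to a null subset of $\A^2$ under $(u,v)\mapsto u-v$, and the error in Proposition~\ref{thm:mixed_covariance} vanishes almost everywhere, so $T_{\epsilon,\delta}(u,v)\to T(u,v)$ for almost every $(u,v)$.

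Next, for the domination \eqref{domination}, set $\eta=\epsilon\vee\delta$ and split $Q_\eta(x)=-\log\big(\tfrac{\eta}{2\pi}\vee|x|\big)+\log\big(\tfrac{\eta}{2\pi}\vee\sqrt{|\ell^2-x^2|}\big)$. A direct check gives $\tfrac{\eta}{2\pi}\vee|x|\ge\tfrac{1}{2\pi}(\eta\vee|x|)$, so the first term is at most $\log(2\pi)-\log(\eta\vee|x|)=\log(2\pi)+\log\min\{\eta^{-1},|x|^{-1}\}\le\log(2\pi)+\log^+\!\big(\eta^{-1}\wedge|x|^{-1}\big)$; and since $u,v$ range over the compact ball $\A$ and $\eta$ stays bounded, the second term is at most a constant depending only on $\ell$ and $\operatorname{diam}\A$. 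Adding the uniformly bounded error of Proposition~\ref{thm:mixed_covariance} then yields $T_{\epsilon,\delta}(u,v)\le\log^+\!\big(|u-v|^{-1}\wedge\epsilon^{-1}\wedge\delta^{-1}\big)+C$.

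Finally, for the sharpness \eqref{assumption_5}, suppose $\delta\ge\epsilon\ge0$ and $|u-v|\le e^{-\delta^{-1}}$, so $\eta=\delta$. Because $e^{-\delta^{-1}}$ decays faster than any power of $\delta$, we have $|u-v|\le\tfrac{\delta}{2\pi}$ for all small $\delta$, hence $\tfrac{\delta}{2\pi}\vee|u-v|=\tfrac{\delta}{2\pi}$ and the first term of $Q_\delta(u-v)$ equals $\log\delta^{-1}+\log(2\pi)$; moreover $|u-v|\to0$ forces $\sqrt{|\ell^2-(u-v)^2|}\to\ell$, so the second term is $\underset{\delta\to0}{O}(1)$, and together with the uniformly bounded error this gives $T_{\epsilon,\delta}(u,v)=\log\delta^{-1}+\underset{\delta\to0}{O}(1)$. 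The whole argument is essentially mechanical once Proposition~\ref{thm:mixed_covariance} is available; the only point requiring a little care is keeping the constant in \eqref{domination} uniform, which is precisely why it is convenient that $\A$ is compact and $\ell$ is fixed (on the non-compact line one would restrict to the compact support of the test function $w$).\qed
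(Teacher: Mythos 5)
Your proof is correct and follows essentially the same route as the paper: all three parts of Assumption~\ref{kernel_asymp} are read off from Proposition~\ref{thm:mixed_covariance} together with the explicit form of $Q_\eta$ in \eqref{Q_epsilon}. The only minor difference is in the bookkeeping for \eqref{domination}: the paper obtains a constant depending only on $\ell$ (uniformly for all $x\in\R$, using that $x\mapsto -\log|x|+\log\sqrt{\ell^2+|x|^2}$ is decreasing on $\R_+$), whereas you invoke compactness of $\A$ to absorb the second term into a constant depending on $\operatorname{diam}\A$; both bounds suffice for the corollary.
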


\proof By formula \eqref{Q_epsilon}, 
$\displaystyle \lim_{\epsilon\to0} Q_\epsilon(x) = Q(x)$ for almost all $x\in\R$, so that the condition \eqref{assumption_2}  follows directly from  Proposition~\ref{thm:mixed_covariance}.
Similarly, \eqref{assumption_5} is also an immediate consequence  Proposition~\ref{thm:mixed_covariance}.
To get the upper-bound, we check that if $\epsilon \le \ell \wedge 1$, directly from \eqref{Q_epsilon} we obtain
$$
Q_\epsilon(x) \le - \log\left(\epsilon \vee|x|\right)   +  \log(2\pi\sqrt{\ell^2+|x|^2}).
$$
We now use the identity $-\log^+(\epsilon^{-1} \wedge |x|^{-1}) = \log(\epsilon \vee |x|)+\log(|x|)\1_{|x|\geq 1}$, which follows from the definition \eqref{logplus} of $\log^+$ and the condition $\epsilon \leq 1$, to derive
$$
Q_\epsilon(x) -  \log^+\left(\epsilon^{-1}\wedge |x|^{-1}\right)   \le \sup\left\{ \log(2\pi\sqrt{\ell^2+|x|^2}) : |x| \le 1 \right\} \vee 
 \sup\left\{ - \log|x|  +  \log(2\pi\sqrt{\ell^2+|x|^2}) : |x| \ge 1 \right\} .
$$
Since the function $x\mapsto  -\log|x|  +  \log(2\pi\sqrt{\ell^2+|x|^2})$ is decreasing on the set $|x| \geq 1$, we have
\begin{equation*} 
Q_{\epsilon \vee \delta}(x) \le \log^+\left(|x|^{-1}\wedge \epsilon^{-1}\wedge\delta^{-1}\right) + \log(2\pi\sqrt{\ell^2+1})
\end{equation*}
and the condition \eqref{domination} also follows from Proposition \ref{thm:mixed_covariance}. \qed

\section{Asymptotic analysis} \label{sect:asymptotics}

\subsection{Proof of Proposition~\ref{thm:sine_asymp}} \label{sect:RHP}

The goal is to deduce the asymptotics of Proposition~\ref{thm:sine_asymp} 
from Theorem~\ref{thm:Deift} by performing the asymptotics of the solution to the Riemann-Hilbert problem \eqref{RHP_0}. Recall that for the sine process, we have
\begin{equation} \label{fg}
\f(x) =\begin{pmatrix} e^{i \pi N x} \\  e^{-i \pi Nx}     \end{pmatrix} ,
\hspace{1cm}
\g(x) = \begin{pmatrix} e^{i \pi N x} \\  -e^{-i \pi Nx}     \end{pmatrix} , 
\end{equation}
so that we look for the asymptotics of the solution to the problem:
\begin{itemize}
	\item $m(z) $ is analytic on $\mathbb{C}\backslash\R$.
	\item $m(z)$ satisfies the jump condition:
\begin{equation}  \label{RHP_1}
m_{+}(x) = m_{-}(x)  \begin{pmatrix} 1+ \varphi(x)  & - \varphi(x)e^{\pii N x}  \\  
 \varphi(x)e^{ -\pii N x} & 1- \varphi(x) \end{pmatrix} , \qquad x \in \R 
\end{equation}
\vspace{-.7cm}
	\item $m(z) \to \Id$ as $z \to \infty$ in $\mathbb{C}\backslash\R$. 
\end{itemize}

Note that we do not emphasize that the matrix $m$ depends on the dimension $N$
to keep the notation simple. Moreover, the solution of  \eqref{RHP_0} can be obtained from the solution of  \eqref{RHP_1} simply by replacing $\varphi$ by $\varphi_t= t\varphi$ for all $t\in [0,1]$. 
Finally, we will generalize slightly the setting of Proposition~\ref{thm:sine_asymp} and work with the following assumptions.

\begin{assumption} \label{psi_assumption}
Suppose that $\varphi$ is a function which is analytic in the strip $| \Im z | <\delta $,  so that $\varphi : \R \to \R$ and  $\varphi \in L^1 \cap L^\infty(\R\pm i s)$ for all $|s| \le \delta/2$. 
In particular, we define
$$C_\varphi = 
  \sup\big\{ \| \varphi\|_{L^\infty(\R+i s)} \vee  \| \varphi\|_{L^1(\R+i s)}  : s \le\delta/2\big\} .$$
 Let us also assume that there exists a constant $c>0$ so that
\begin{equation}  \label{condition_alpha}
\inf_{  | \Im z | <\delta, x<-1}|\varphi(z) - x|  \ge c. 
\end{equation}
In particular, the function $\psi = \log(1+\varphi)$ is also analytic  in the strip $| \Im z | <\delta$. Finally, we assume that $\psi \in L^1\cap L^\infty(\R)$ and we let  $C_\psi = c^{-1} \exp  \| \psi \|_{L^\infty(\R)}$. 
\end{assumption}

\begin{lemma} \label{thm:RHP}
Suppose that the function $\varphi$ satisfies Assumption~\ref{psi_assumption} and  that $   C_\varphi C_\psi^2 e^{-\pi \delta N} \to 0$ as $N\to\infty$.
Then, the solution of the Riemann-Hilbert problem \eqref{RHP_1} satisfies for all $x\in \R$, 
\begin{equation}  \label{asymptotics} 
  m_+(x) =   R(x)    
  \begin{pmatrix} e^{\mathscr{C}(\psi)_+(x)} &  -\frac{\varphi(x)}{1+\varphi(x)} e^{ \mathscr{C}(\psi)_+(x)+ \pii N x}  \\  0 & e^{-\mathscr{C}(\psi)_+(x)}\end{pmatrix}  , 
 \end{equation}   
where  $\mathscr{C}(\psi)$ denotes the Cauchy transform of the function $\psi = \log(1+\varphi)$ and the  $2 \times 2$ matrix $R(z) $ is analytic in the strip $|\Im z| <  \delta/4$  and satisfies the bound:
\begin{equation}\label{R}
\| R(z) -\Id \| \ll \delta^{-1/2}  C_\varphi C_\psi^2 e^{-\pi \delta N} 
\end{equation}
for the matrix supremum norm.
\end{lemma}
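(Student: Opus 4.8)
The proof is a Deift--Zhou nonlinear steepest descent analysis of the Riemann--Hilbert problem \eqref{RHP_1}; the argument is textbook, so the real content is keeping every estimate explicit in $C_\varphi$, $C_\psi$ and $\delta$ (because the application needs $\varphi$, and $\delta=\delta(N)\to0$, to depend on $N$). \textbf{Step 1 (factorization).} Since $\det v=1$ and $1+\varphi$ does not vanish on the strip $|\Im z|<\delta$ (this is the role of \eqref{condition_alpha}), one factors the jump matrix in \eqref{RHP_1} as $v=L_-\,D\,L_+$, where $D=\mathrm{diag}\big(1+\varphi,(1+\varphi)^{-1}\big)$, $L_+=\begin{pmatrix}1 & -\tfrac{\varphi}{1+\varphi}e^{\pii N x}\\ 0 & 1\end{pmatrix}$, $L_-=\begin{pmatrix}1 & 0\\ \tfrac{\varphi}{1+\varphi}e^{-\pii N x} & 1\end{pmatrix}$. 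The off-diagonal entry of $L_+$ carries $e^{\pii N z}$, which is exponentially small for $\Im z>0$; that of $L_-$ carries $e^{-\pii N z}$, exponentially small for $\Im z<0$.

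\textbf{Step 2 (lens opening).} Define $S$ by $S=m$ for $|\Im z|\ge\delta/2$, $S=mL_+^{-1}$ for $0<\Im z<\delta/2$, and $S=mL_-$ for $-\delta/2<\Im z<0$. Analyticity of $\varphi$ (hence of $L_\pm$) on $|\Im z|<\delta$ shows $S$ is analytic off $\R\cup(\R+i\tfrac\delta2)\cup(\R-i\tfrac\delta2)$; it satisfies $S_+=S_-D$ on $\R$, and on $\R\pm i\tfrac\delta2$ its jump is $\Id$ plus an off-diagonal entry of modulus $\le c^{-1}|\varphi|\,e^{-\pi\delta N}\le C_\psi|\varphi|\,e^{-\pi\delta N}$, using $|\tfrac{\varphi}{1+\varphi}|\le c^{-1}|\varphi|$. \textbf{Step 3 (global parametrix).} With $\psi=\log(1+\varphi)\in L^1\cap L^\infty(\R)$ and $\mathscr{C}(\psi)(z)=\tfrac1{\pii}\int_\R\tfrac{\psi(s)}{s-z}\,ds$ its Cauchy transform, put $m^{(\infty)}(z)=\mathrm{diag}\big(e^{\mathscr{C}(\psi)(z)},e^{-\mathscr{C}(\psi)(z)}\big)$. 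By Sokhotski--Plemelj, $\mathscr{C}(\psi)_+-\mathscr{C}(\psi)_-=\psi$, so $m^{(\infty)}$ has jump $D$ on $\R$, and $\psi\in L^1$ forces $m^{(\infty)}(z)\to\Id$ at $\infty$; deforming the contour in the definition of $\mathscr{C}(\psi)$ into the analyticity strip controls $|e^{\pm\mathscr{C}(\psi)}|$ on $\R\pm i\tfrac\delta2$ (a second factor of $C_\psi$ and part of the $\delta^{-1/2}$).

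\textbf{Step 4 (small-norm estimate).} Set $R=S\,(m^{(\infty)})^{-1}$. Then $R$ has no jump across $\R$ (there $S$ and $m^{(\infty)}$ share the jump $D$), is analytic on $\C\setminus\Gamma$ with $\Gamma=(\R+i\tfrac\delta2)\cup(\R-i\tfrac\delta2)$, tends to $\Id$ at $\infty$, and has jump $\Id+\Delta$ on $\Gamma$ with $\|\Delta\|_{L^2\cap L^\infty(\Gamma)}\ll C_\varphi C_\psi^2\,e^{-\pi\delta N}$ (one $C_\psi$ from $L_\pm$, one from conjugating $\Delta$ by the diagonal $m^{(\infty)}$, and $\|\varphi\|_{L^2(\R\pm i\tfrac\delta2)}\le\|\varphi\|_{L^1}^{1/2}\|\varphi\|_{L^\infty}^{1/2}\le C_\varphi$). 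Under the hypothesis $C_\varphi C_\psi^2 e^{-\pi\delta N}\to0$, the standard small-norm theory for Riemann--Hilbert problems applies: the associated singular integral equation is solvable on $L^2(\Gamma)$ by a Neumann series (the Cauchy operator on the union of two lines $\Gamma$ has $L^2$-norm bounded independently of $\delta$), giving $\|R_--\Id\|_{L^2(\Gamma)}\ll\|\Delta\|_{L^2\cap L^\infty(\Gamma)}$ and the representation $R(z)=\Id+\tfrac1{\pii}\int_\Gamma\tfrac{(R_-\Delta)(s)}{s-z}\,ds$. For $|\Im z|\le\delta/4$ one has $\mathrm{dist}(z,\Gamma)\ge\delta/4$, so $\|(s-z)^{-1}\|_{L^2(\Gamma)}\ll\delta^{-1/2}$, and Cauchy--Schwarz yields \eqref{R}; in particular $R$ is analytic on $|\Im z|<\delta/4$. \textbf{Step 5 (unfolding).} For $x\in\R$ approached from $0<\Im z<\delta/2$ we have $m=SL_+=R\,m^{(\infty)}_+L_+$, and $R_+(x)=R(x)$ by analyticity; multiplying out $m^{(\infty)}_+(x)L_+(x)=\begin{pmatrix}e^{\mathscr{C}(\psi)_+(x)} & -\tfrac{\varphi(x)}{1+\varphi(x)}e^{\mathscr{C}(\psi)_+(x)+\pii N x}\\ 0 & e^{-\mathscr{C}(\psi)_+(x)}\end{pmatrix}$ reproduces exactly \eqref{asymptotics}.

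\textbf{Main obstacle.} The difficulty is not the structure but the uniform bookkeeping: controlling $|e^{\pm\mathscr{C}(\psi)}|$ on the shifted lines by contour deformation, verifying that the Cauchy operator norm on $\Gamma$ and the factorization/lens deformation do not degenerate (no pinching) as $\delta\to0$, and tracking the precise powers of $C_\psi$ so that the final estimate is genuinely of the form $\delta^{-1/2}C_\varphi C_\psi^2 e^{-\pi\delta N}$ rather than merely "$o(1)$". Everything else is routine.
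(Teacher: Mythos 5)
Your proposal is correct and follows essentially the same route as the paper: the factorization $v=L_-DL_+$ with $D=\mathrm{diag}(e^\psi,e^{-\psi})$ is exactly the paper's $v=\tilde A\,\Psi\,A$, the lens opening and Cauchy-transform parametrix $P=\mathrm{diag}(e^{\mathscr{C}(\psi)},e^{-\mathscr{C}(\psi)})$ are identical, and your "standard small-norm theory" is precisely what the paper invokes via Kuijlaars' theorem, down to the Poisson-kernel bound $|\Re\,\mathscr{C}(\psi)|\le\|\psi\|_{L^\infty(\R)}/2$ on $\Gamma_\pm$, the interpolation bound $\|\varphi\|_{L^2}\le\|\varphi\|_{L^1}^{1/2}\|\varphi\|_{L^\infty}^{1/2}$, and the Cauchy--Schwarz estimate giving the $\delta^{-1/2}$ factor from $\|(s-z)^{-1}\|_{L^2(\Gamma)}$.
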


The proof of Lemma~\ref{thm:RHP} will be given at the end of this section and it follows closely the proof of the Strong Szeg\H{o} theorem given by Deift in \cite[Example~3]{Deift99}. 
Given this result, let us first complete the proof of Proposition~\ref{thm:sine_asymp}.\\

{\it Proof of Proposition~\ref{thm:sine_asymp}.} First of all, 
we claim that,  if the function $h(z)$ satisfies Assumption~\ref{h_assumption}, then the function $\varphi_t(z) = t(e^{h(z)} -1)$ satisfies Assumption~\ref{psi_assumption} for all $t\in[0,1]$.  Indeed, we have
$$
|\varphi_t (z)| \le  \bigg|\int_0^{h(z)} e^w dw\bigg| \le C_\infty C_1
$$
and for all $|s|\le \delta/2$,
\begin{equation} \label{estimate_2}
\int_{\R} |\varphi_t (x+ is)| dx  \le C_\infty\int_{\R}  \big|h(x+i s)\big| dx 
\le C_\infty C_1 . 
\end{equation}
On the other hand, the condition $ |\Im h(z)| <  \alpha $ guarantees we can choose $c= |\sin \alpha |/2 $ in \eqref{condition_alpha}. Thus, for all $t\in[0,1]$, the functions $\psi_t(z) = \log(1-t + te^{h(z)} )$ are analytic in the strip $| \Im z | <\delta$, 
and  since the $\log$ function is increasing on $\R_+$, we have
for all $x\in\R$,
$$
\psi_t(x)\vee 0 \le  \log(e^{h(x)}\vee 1 ) = h(x)\vee 0
\quad\text{ and }
- \psi_t(x)\wedge 0 \le - \log(e^{h(x)}\wedge 1 ) \le h(x) \wedge 0 .
$$
 These inequalities show that 
\begin{equation} \label{estimate_1}
|\psi_t(x)| \le |h(x)| 
\end{equation}
and it follows that $\psi _t\in L^1\cap L^\infty(\R)$  (in fact there is equality in \eqref{estimate_1} if and only if $t=1$, in which case $\psi_1=h$).   
The bottom line is that, for any $t\in [0,1]$, the  function $\varphi_t(z) = t(e^{h(z)} -1)$ satisfies Assumption~\ref{psi_assumption} with
\begin{equation} \label{constants_2}
C_{\varphi_t} = C_\infty C_1
\quad\text{ and } \quad
C_{\psi_t} = \frac{2C_\infty}{|\sin\alpha|} .  
\end{equation}
In the rest of the proof, we will  denote  for all $t\in[0,1]$ and for all $x\in\R$, 
$$\psi_t(z) = \log(1+t \varphi) = \log(1-t + te^{h(z)} )
\quad\ \text{ and }\quad\
H_t(x) =  \mathscr{C}(\psi_t)_+(x). 
$$
By  Theorem~\ref{thm:Deift}, formula \eqref{asymptotics} implies that 
$$
\mathrm{F}_t(x)= m_+(x)\f(x) = R_t(x) \widetilde{\mathrm{F}}_t(x)
\quad\text{ where }\quad
\widetilde{\mathrm{F}}_t(x)=
 \begin{pmatrix}  \frac{e^{ i \pi N x +H_{t}(x)}}{1+ t\varphi(x)}  \\  e^{- i \pi  N x-H_{t}(x)} \end{pmatrix} ,
$$
and  
$$
\mathrm{G}_t(x) =  (m_+^{-1})^*(x)\g(x) = (R_t(x)^{-1})^*  \widetilde{\mathrm{G}}_t(x)
\quad\text{ where }\quad
\widetilde{\mathrm{G}}_t(x)=
 \begin{pmatrix} e^{i \pi  N x + H_{t}(x)} \\   
-  \frac{e^{-i \pi  N x -H_{t}(x)} }{1+ t\varphi(x)}\end{pmatrix} . 
$$ 
Since the function $h' \in L^1(\R)$,  the function $H_t$ is continuously differentiable on $\R$ for all $t\in[0,1]$. Moreover, by the Plemelj-Sokhotski formula \eqref{PS}, we obtain 
\begin{equation} \label{H'}
2H_{t}' = \frac{t \varphi'}{1+t \varphi}  
+ i \Hi\left(\psi_t'\right) . 
\end{equation}
So, if we differentiate the expression of $ \widetilde{\mathrm{F}}_t(x)$, we obtain
 for all $x\in\R$, 
$$
 \widetilde{\mathrm{F}}_t'(x)= \begin{pmatrix}  
\big( \pi i N  + H_{t}'(x) - \frac{t \varphi'(x)}{1+t \varphi(x)} \big)
\frac{1}{1+ t\varphi(x)} e^{ \pi i N x +H_{t}(x)} \\  
-\big( \pi i N  + H_{t}'(x) \big)e^{-\pi i N x-H_{t}(x)} \end{pmatrix} .
$$
Since, $\mathrm{F}_t^* \mathrm{G}_t =0 $, this shows that 
\begin{align} \notag
&\left(\frac{d\sqrt{\varphi}\mathrm{F}_t}{dx}  (x)\right)^* \left(\sqrt{\varphi(x)} \mathrm{G}_t(x)\right) 
= \varphi(x) \mathrm{F}_t'(x)^*\mathrm{G}_t(x) \\
& \label{FG'} 
\hspace{1cm}= \frac{\varphi(x) }{1+t \varphi(x)} \left(- \pii N  + 2\overline{H_{t}'(x)} - \frac{t \varphi'(x)}{1+t \varphi(x)}  \right)
+\varphi(x)\left(U_t(x) \widetilde{\mathrm{F}}_t(x)\right)^* \widetilde{\mathrm{G}}_t(x) ,
\end{align}
where $U_t(x) = R_t(x)^{-1}  R'_t(x)$. 
Since the matrix  $R_t(z) $ is analytic in the strip $|\Im z| < \delta/4$, 
by Cauchy's formula, $\| R_t'(x) \| \le \delta^{-1} \| R_t(x) -\Id \|$. Thus, the estimate \eqref{R} and \eqref{constants_2} show that 
$$
\|U_t(x)\| \ll \delta^{-3/2}  C_{\varphi_t} C_{\psi_t} e^{-\pi \delta N} 
\ll \frac{C_\infty^3 C_1}{\delta^{3/2}|\sin\alpha|}  e^{-\pi \delta N}  . 
$$
On the other hand, $\Re\{ H_{t}\} = \frac{1}{2}\log(1+t \varphi) =  \frac{\psi_t}{2}$ so that, by the estimate \eqref{estimate_1},  for all $x\in \R$ and all $t\in[0,1]$,  
$$
\| \widetilde{\mathrm{F}}_t(x)\| \le \sqrt{C_\infty}
\quad\text{ and }\quad
 \| \widetilde{\mathrm{G}}_t(x)\| \le C_\infty^{3/2} . 
$$
This proves that the last term in formula \eqref{FG'} is bounded by 
\begin{equation} \label{error_1}
\left|\varphi(x) \left(U_t(x) \widetilde{\mathrm{F}}_t(x)\right)^* \widetilde{\mathrm{G}}_t(x)\right| 
\ll   |\varphi(x)|  \frac{C_\infty^5 C_1}{\delta^{3/2}|\sin\alpha|}  e^{-\pi \delta N}  .
\end{equation}
 By \eqref{estimate_2}, the RHS of  \eqref{error_1} is integrable on $\R\times[0,1]$ and its $L^1$-norm contributes to the   error term in formula~\eqref{Laplace_asymp}. 
 Therefore, by formula \eqref{Laplace_T} and \eqref{FG'},  in order to complete the proof, it remains to show that
\begin{equation} \label{main_asymp}
\int_0^1 \int_\R \frac{\varphi(x) }{1+t \varphi(x)} \left(  \pii N  - 2\overline{H_{t}'(x)}  +\frac{t \varphi'(x)}{1+t \varphi(x)}  \right) dxdt = \pii N \int  h(x) dx 
+ i\pi  \| h \|_{H^{1/2}(\R)}^2  . 
\end{equation}
 This is a rather easy computation that we split in two steps.  First observe that, since 
 $\varphi(x) =e^{h(x)}-1$,  we have
 $$
 \int_0^1 \frac{\varphi(x) }{1+t \varphi(x)}dt 
 = \big[ \log(1+t \varphi(x))\big]_{t=0}^1
 = h(x) . 
 $$
 By Fubini's theorem, this yields the first term in 
 formula \eqref{main_asymp}. Secondly, by formula \eqref{H'}, we have
\begin{align*}
  \frac{\varphi }{1+t \varphi} \left( 2H_{t}'  - \frac{t \varphi'}{1+t \varphi}  \right)
&= i\frac{d\psi_t}{dt} \Hi\left(\frac{d\psi_t}{dx}\right). 
 \end{align*}
By Plancherel's  formula and the linearity of the Fourier transform, this implies that
\begin{align*}
\int_\R \frac{d\psi_t}{dt}(x) \Hi\left(\frac{d\psi_t}{dx}\right)(x) dx
&=  2\pi \int_\R  |\kappa|  \frac{d \widehat{\psi_t}}{dt}(\kappa) \overline{\widehat{\psi_t}(\kappa)} d\kappa \\
&= \pi  \frac{d}{dt} \underbrace{\left(\int_\R  |\kappa|  \widehat{\psi_t}(\kappa) \overline{\widehat{\psi_t}(\kappa)} d\kappa \right)}_{
\displaystyle = \| \psi_t \|_{H^{1/2}(\R)}^2}
\end{align*}
Note that we can pull the differential 
$ \frac{d}{dt}$ out of the integral because both functions $\psi_t , \frac{d\psi_t}{dt} \in L^2(\R)$ by our assumptions.
We conclude that
\begin{align*}
\int_0^1\int_\R  \frac{\varphi (x)}{1+t \varphi(x)} \left( 2H_{t}'(x)  - \frac{t \varphi'(x)}{1+t \varphi(x)}  \right) dxdt
&= i\pi \left( \| \psi_1 \|_{H^{1/2}(\R)}^2 - \| \psi_0 \|_{H^{1/2}(\R)}^2 \right). 
\end{align*}
Since $\psi_1= h$ and $\psi_0=0$, this yields the second term in formula  \eqref{main_asymp}. \qed\\

In order to get the uniform  estimate \eqref{R} in Lemma~\ref{thm:RHP},
  we will need the following correspondence between a well-posed RHP and a singular integral equation.

\begin{theorem}[Kuijlaars \cite{K03}, Theorem 3.1] \label{thm:R}
Let $\Sigma$ be an oriented contour in the complex plane and let $\Delta \in L^2 \cap L^\infty(\Sigma)$ be a $n\times n$ matrix. Suppose that $R$ is a $n\times n$  matrix which is analytic in $\C\backslash\Sigma$ and satisfies
\begin{equation}  \label{RHP_5} 
\begin{cases}
  R_+ = R_- (\Id +\Delta) &\text{on }\Sigma \\
 R(z)\to \Id  &\text{as } z\to\infty 
 \end{cases} .
\end{equation}
We associate to $\Delta$ an operator $\mathscr{C}_\Delta$ acting on $n\times n$ matrices in $L^2(\Sigma)$ and defined by
\begin{equation*}
\Cau_\Delta Y (z) = \lim_{w \to z_+} \frac{1}{2\pi i} \int_\Sigma \frac{Y(s) \Delta(s) }{s-w} ds . 
\end{equation*}
Suppose that $\| \Delta\|_{L^\infty(\Sigma)}$ is sufficiently small so that the equation 
\begin{equation} \label{Cauchy_equation}
X - \Cau_\Delta X  =  \Cau_{\Delta}\Id
\end{equation}
 has a unique solution $X \in L^2(\Sigma)$. Then, there exists a constant $C>0$ which only depends on the contour $\Sigma$ such that
 \begin{equation} \label{X}
 \| X \|_{L^2(\Sigma)} \le  \frac{C\| \Delta\|_{L^2(\Sigma)}}{1- C\| \Delta\|_{L^\infty(\Sigma)} } ,
 \end{equation}
and  the RHP \eqref{RHP_5} has a unique solution which is given by 
\begin{equation*}
R(z) = \Id +  \frac{1}{2\pi i} \int_\Sigma \frac{(\Id + X(s)) \Delta(s) }{s-z} ds 
\end{equation*}
for any $z\in \C \backslash \Sigma$. 
\end{theorem}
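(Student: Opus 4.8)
The plan is to turn the matrix Riemann--Hilbert problem \eqref{RHP_5} into a linear singular integral equation on $L^2(\Sigma)$ and to invert that equation by a Neumann series, which converges precisely in the small-norm regime $\|\Delta\|_{L^\infty(\Sigma)}$ small. First I would seek $R$ in Cauchy-transform form
\[
R(z) = \Id + \frac{1}{\pii} \int_\Sigma \frac{\nu(s)}{s-z}\,ds , \qquad \nu \in L^2(\Sigma) ,
\]
which automatically yields a matrix analytic in $\C\backslash\Sigma$ with $R(z)\to\Id$ as $z\to\infty$. Writing $\Cau_\pm$ for the two one-sided boundary values of the Cauchy operator $\nu\mapsto\frac{1}{\pii}\int_\Sigma\frac{\nu(s)}{s-\cdot}\,ds$, the Plemelj--Sokhotski formulae give $R_\pm = \Id + \Cau_\pm\nu$ together with $\Cau_+ - \Cau_- = \mathrm{id}$. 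Substituting into the jump relation $R_+ = R_-(\Id+\Delta)$ and cancelling gives $\nu = (\Id + \Cau_-\nu)\Delta$; writing $\nu = (\Id + X)\Delta$, this is exactly equation \eqref{Cauchy_equation} with the operator $\Cau_\Delta$ as in the statement (the one-sidedness being dictated by the orientation of $\Sigma$). Hence solving \eqref{RHP_5} is equivalent to solving \eqref{Cauchy_equation}, and once $X$ is known $R$ is recovered by the displayed representation formula.

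Next I would solve \eqref{Cauchy_equation} by a fixed-point argument. The operator $\Cau_\Delta$ is bounded on $L^2(\Sigma)$ with $\|\Cau_\Delta\|_{L^2(\Sigma)\to L^2(\Sigma)} \le \kappa_\Sigma \|\Delta\|_{L^\infty(\Sigma)}$, where $\kappa_\Sigma$ is the operator norm of $\Cau_\pm$ on $L^2(\Sigma)$; this $\kappa_\Sigma$ is finite and depends only on $\Sigma$, and is the sole origin of the constant $C$ in the statement. For the contours arising in practice (finite unions of lines, circles and smooth arcs meeting transversally) this boundedness is elementary; for a general rectifiable $\Sigma$ it is the Coifman--McIntosh--Meyer theorem. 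When $\kappa_\Sigma\|\Delta\|_{L^\infty(\Sigma)}<1$, the Neumann series for $(\mathrm{id}-\Cau_\Delta)^{-1}$ converges in operator norm, with $\|(\mathrm{id}-\Cau_\Delta)^{-1}\| \le (1-\kappa_\Sigma\|\Delta\|_{L^\infty(\Sigma)})^{-1}$, so \eqref{Cauchy_equation} has a unique solution $X = (\mathrm{id}-\Cau_\Delta)^{-1}\Cau_\Delta\Id\in L^2(\Sigma)$; since $\|\Cau_\Delta\Id\|_{L^2(\Sigma)} \le \kappa_\Sigma\|\Delta\|_{L^2(\Sigma)}$, this is precisely the bound \eqref{X}.

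It then remains to verify that the $R$ built from this $X$ genuinely solves \eqref{RHP_5}, that it is the unique solution, and to record the pointwise estimate. The first is the Plemelj computation of the first paragraph run backwards, using $\nu = (\Id+X)\Delta\in L^2(\Sigma)$ (here $X\Delta\in L^2$ since $X\in L^2$ and $\Delta\in L^\infty$, and $\Delta\in L^2$ by hypothesis). For uniqueness, the bound \eqref{X} forces $\|R-\Id\|$ to be uniformly small on compact subsets of $\C\backslash\Sigma$, so $R$ is invertible there; if $R_1,R_2$ both solved \eqref{RHP_5}, then $R_1R_2^{-1}$ would have trivial jump across $\Sigma$, extend to an entire function tending to $\Id$ at infinity, and hence equal $\Id$ by Liouville. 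Finally, for $z$ with $d(z,\Sigma)>0$, Cauchy--Schwarz in the representation formula gives $\|R(z)-\Id\| \le \frac{1}{2\pi}\,\|(\Id+X)\Delta\|_{L^2(\Sigma)}\,\|(\,\cdot\,-z)^{-1}\|_{L^2(\Sigma)}$, and $\|(\,\cdot\,-z)^{-1}\|_{L^2(\Sigma)} \ll d(z,\Sigma)^{-1/2}$ for contours of the relevant shape --- this is exactly the form used later to derive \eqref{R} in Lemma~\ref{thm:RHP}.

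The main obstacle is the $L^2(\Sigma)$-boundedness of the Cauchy singular integral operator: this is the genuine analytic content absorbed into the constant $C$, and it is also why the hypothesis is phrased as $\Delta\in L^2\cap L^\infty(\Sigma)$ --- near self-intersection points or endpoints of $\Sigma$ one must work in $L^2$ throughout, with the $L^\infty$ norm keeping $\Cau_\Delta$ contractive and the $L^2$ norm keeping the inhomogeneous term (hence $X$, hence $R-\Id$) square-integrable.
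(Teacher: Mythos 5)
The paper does not contain its own proof of this statement --- it cites Kuijlaars \cite{K03}, Theorem 3.1, and uses it as a black box --- so there is nothing internal to compare against. Your argument is correct and is essentially the standard small-norm proof Kuijlaars gives: a Cauchy-transform ansatz $R=\Id+\mathscr{C}(\nu)$, reduction of the jump condition via Plemelj--Sokhotski to the singular integral equation \eqref{Cauchy_equation}, Neumann-series inversion (with the contraction coming from the $L^2(\Sigma)$-boundedness of the one-sided Cauchy operator, which is indeed the sole source of the constant $C$ and the analytic content of the theorem), the resulting bound \eqref{X}, and a Liouville argument for uniqueness once the constructed solution is seen to be pointwise invertible. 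One point you should state outright rather than wave at: the paper writes $\Cau_\Delta$ via $\lim_{w\to z_+}$, which reads as the plus-side boundary value, but for the jump $R_+=R_-(\Id+\Delta)$ the derivation produces \eqref{Cauchy_equation} in the stated form only if $\Cau_\Delta Y$ is the \emph{minus}-side boundary value of the Cauchy transform of $Y\Delta$; this is exactly the $\Cau_-$ you use, and using $\Cau_+$ instead would lead to a different (messier) equation involving $(\Id+\Delta)^{-1}$. So the ``$z_+$'' in the paper looks like a slip in its restatement of Kuijlaars, and the parenthetical ``the one-sidedness being dictated by the orientation of $\Sigma$'' in your write-up would be better replaced by this short computation.
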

Armed with the above theorem, we can construct the solution of the Riemann-Hilbert problem~\eqref{RHP_1} and check the estimate \eqref{R}. \\

{\it Proof of Lemma~\ref{thm:RHP}.}
If $\psi(x) = \log(1+\varphi(x))$, then we can rewrite the jump matrix as
\begin{equation*} 
v(x)  = \begin{pmatrix} e^{\psi(x)}  &-  \varphi(x)e^{\pii N x}  \\  
 \varphi(x)e^{-\pii N x} & ( 1- \varphi(x)^2)  e^{-\psi(x)}\end{pmatrix} .
\end{equation*}

In order to apply the Deift-Zhou steepest method, we use  the decomposition:
\begin{equation} \label{v_2}
v(x)
 =   \underbrace{\begin{pmatrix} 1 & 0\\    \varphi(x)e^{-\psi(x)-\pii N x} &1 \end{pmatrix}}_{\displaystyle=\tilde{A}(x)}
 \underbrace{\begin{pmatrix}  e^{\psi(x)}    &0 \\  0 &  e^{-\psi(x)}   \end{pmatrix}}_{\displaystyle=\Psi(x)}
  \underbrace{ \begin{pmatrix} 1 &   -\varphi(x) e^{-\psi(x) + \pii N x}  \\  0 &1 \end{pmatrix}}_{\displaystyle=A(x)} .
\end{equation}

By assumption, the matrices $A$ and $\tilde{A}$ are analytic in the domain $|\Im z|<\delta$ and we  can define a matrix $M$  on $\C\backslash \{\R \cup \Gamma_\pm \}$ by setting:
\begin{center}
\begin{tikzpicture}[scale=1]
\draw[thick, red] (-4, 1) -- (4, 1) ;
\draw[thick] (-4, 0) -- (4, 0) ;
\draw[thick, red] (-4, -1) -- (4, -1) ;

  \node[red] at ( 0, 1.5) {$M(z) = m(z) $};
    \node at ( 0, .5) {$M(z) =  m(z) A^{-1}(z) $};
    \node at ( 0, -.5) {$M(z) =  m(z) \tilde{A}(z) $};
  \node[red] at ( 0, -1.5) {$M(z) = m(z) $};
  
 \node at ( 4.2, 0) {$\R$};
  \node[red] at ( 5.2, 1) {$\Gamma_+ = \R + i \delta/2$};
  \node[red] at ( 5.2, -1) {$\Gamma_-= \R - i \delta/2$};
  \node at (0, 0) {0};
\end{tikzpicture} .
\end{center}
We deduce from the Riemann-Hilbert problem \eqref{RHP_1},  that the matrix $M$ has the following properties:
\begin{itemize}
	\label{RHP_2}
	\item $M(z) $ is analytic on $\mathbb{C}\setminus (\Gamma_{\pm}\cup \R)$.
	\item $M(z)$ satisfies the following jump conditions
	\begin{equation}\label{MRHP}
	\begin{cases}
		M_{+}(z) &= M_{-}(z)A(z), \qquad z \in \Gamma_{+}\\
		M_{+}(x) &= M_{-}(x)\Psi(x), \qquad x \in \mathbb{R}\\
		M_{+}(z) &= M_{-}(z)\tilde{A}(z), \qquad z \in \Gamma_{-}
	\end{cases}
	\end{equation} \vspace{-.7cm}
	\item $M(z) \to \Id$ as $z \to \infty$.
\end{itemize}
Since $\psi\in L^1(\R)$, its Cauchy transform is well-defined, c.f.~\eqref{Cauchy_T}, and we claim that the global parametrix is given by 
\begin{equation}\label{P_matrix}
P(z)= \begin{pmatrix} e^{\mathscr{C}(\psi)(z)}  &0 \\  0 & e^{-\mathscr{C}(\psi)(z)}  \end{pmatrix} . 
\end{equation}
Indeed, it is straightforward to check using formula \eqref{PS}, that it solves the RHP:
\begin{itemize}
	\item $P(z) $ is analytic on $\mathbb{C}\backslash\R$.
	\item $P(z)$ satisfies the condition $  P_+ = P_- \Psi$ on $\R$
	\item $P(z) \to \Id$ as $z \to \infty$.
\end{itemize}
The matrix $P(z)$ is invertible on $\C\backslash\R$ and, if we let $R=MP^{-1}$, 
then the matrix $R$ solves the RHP:
\begin{itemize}
\item $R(z)$ is analytic on $\mathbb{C}\setminus(\mathbb{R}\cup \Gamma_{\pm})$.
	\item If we let $\Delta = P A P^{-1} - \Id $ and $\tilde{\Delta} = P \tilde{A} P^{-1}-\Id $, then $R(z)$ satisfies the  jump conditions
 \begin{equation} \label{RHP_4}
 \begin{cases} 
 R_+ = R_- ( \Id + \Delta )  &\text{on } \Gamma_+\\
R_+ = R_- ( \Id + \tilde{\Delta})  &\text{on } \Gamma_- 
\end{cases}
\end{equation}\vspace{-.7cm}
\item $R(z) \to \Id$ as $z \to \infty$.
\end{itemize}
 Moreover observe that by formulae \eqref{v_2} and \eqref{P_matrix}, 
\begin{align*}
&\Delta (z)=  \begin{pmatrix} 0 &    - \varphi(z) e^{-\psi(z) + \pii N z +2\mathscr{C}(\psi)(z)}
\\ 0 &0 \end{pmatrix} \ ,\hspace{.5cm}\forall z\in \Gamma_+\\
&\tilde{\Delta} (z)=  \begin{pmatrix} 0 &0 \\   \varphi(z) e^{-\psi(z) - \pii N z -2\mathscr{C}(\psi)(z) } &0 \end{pmatrix} \ ,\hspace{.8cm}\forall z\in \Gamma_- .
\end{align*}

The function $\psi$ is real valued on $\R$ and for any $z\in\Gamma_{\pm}$, we have
\begin{equation*}
 \Re\big\{ \mathscr{C}(\psi)(z) \big\} =  \frac{\delta}{4\pi } \int_{\R} \frac{ \psi (x)}{(\Re z-x)^2 + \delta^2/4 } dx ,
\end{equation*} 
so that
\begin{equation*}
 \big|\Re\big\{ \mathscr{C}(\psi)(z) \big\} \big| \le \| \psi \|_{L^\infty(\R)} /2 .
 \end{equation*}
Combined with Assumption~\ref{psi_assumption}, this  implies that for any $z\in \Gamma_+$,
$$
\|\Delta(z)\| = \left|\frac{\varphi(z)}{1+\varphi(z) }\right|  e^{\Re\{ \pii N z +2\mathscr{C}(\psi)(z) \}} \le |\varphi(z)| C_\psi^2 e^{-\pi \delta N} , 
$$
so that the matrix $\Delta \in L^1 \cap L^\infty(\Gamma_+) $ and 
\begin{equation} \label{Delta_estimate}
 \| \Delta \|_{L^\infty(\Gamma_+)} \vee 
  \| \Delta \|_{L^1(\Gamma_+)}   \le    C_1 C_\psi^2 e^{- \pi\delta N} .
  \end{equation}
Note that, since $\tilde{\Delta} (z) =  -\Delta(\bar{z})^*$ for any $z\in \Gamma_-$, the matrix $\tilde{\Delta}$ also satisfies the estimate \eqref{Delta_estimate}.  Hence, by Theorem~\ref{thm:R}, we obtain that
the solution of the RHP \eqref{RHP_4} is unique and is given by
\begin{equation} \label{R_N}
R(z) = \Id +  \frac{1}{2\pi i} \int_{\Gamma_+} \frac{(\Id + X(s)) \Delta(s) }{s-z} ds 
+  \frac{1}{2\pi i} \int_{\Gamma_-} \frac{(\Id + \tilde{X}(s)) \tilde{\Delta}(s) }{s-z} ds 
\end{equation}
where the  $2\times 2$ matrices $X$ and $\tilde{X}$ solve appropriate singular integral equations, c.f.~\eqref{Cauchy_equation}.  Moreover, a simple estimate using the Cauchy-Schwarz inequality shows that for any $z \in \C$ such that $| \Im z | \le \delta/4$, 
 \begin{equation*}
\left\|  \frac{1}{2\pi i} \int_{\Gamma_+} \frac{(\Id + X(s)) \Delta(s) }{s-z} ds\right\| 
\le \frac{1}{2\sqrt{\pi \delta}}  \left(   \| \Delta\|_{L^2(\Gamma_+)}  + 
 \| \Delta\|_{L^\infty(\Gamma_+)} \| X\|_{L^2(\Gamma_+)} \right) .
\end{equation*}
Hence, as  $\| \Delta \|_{L^\infty(\Gamma_+)}\to0$ as $N\to\infty$, combining the bounds \eqref{X}  and \eqref{Delta_estimate}, we have proved that
 \begin{equation*}
\left\|  \frac{1}{2\pi i} \int_{\Gamma_+} \frac{(\Id + X(s)) \Delta(s) }{s-z} ds\right\| 
\ll \delta^{-1/2}C_\psi^2 C_\varphi e^{-\pi \delta N}  .
\end{equation*}
A similar estimate holds for the integral over $\Gamma_-$ and, by formula \eqref{R_N},  this proves the bound~\eqref{R}.
Going back to the original problem, we see that for all $x\in \R$, 
$m_+(x) = M_+(x) A(x)$ where $M_+(x) = R(x) P_+(x) $. Finally, we deduce \eqref{asymptotics} from formulae \eqref{v_2} and \eqref{P_matrix}.    \qed  

\subsection{Strong Gaussian approximation for the CUE} \label{sect:BO}

The goal of this section is to prove the Gaussian asymptotics \eqref{qmoms} for the CUE. We work with the statistic \eqref{smoothing} and it will be convenient to use the notation
\begin{equation}
h_{\mathbf{u},\boldsymbol{\epsilon}}(x) = \gamma \pi\sum_{k=1}^{q}t_{k}\int_{u_k-\ell/2}^{u_k+\ell/2}\phi_{\epsilon_k}(x-t)\,dt
\end{equation}
We shall suppose that the mollifier $\phi$ belongs to the Schwartz class, \textit{i.e.} $\phi$ is a smooth function such that for all $\eta>0$ we have $\phi(x) = O(|x|^{-\eta})$ as $x \to \pm \infty$. The main goal of this section will be to prove 
\begin{proposition}
	\label{prop:cue}
Consider a CUE matrix $U$ of size $N \times N$ with eigenangles $\theta_1,\ldots,\theta_N \in [-\pi,\pi)$ and mesoscopic scale $0 < \alpha < 1$. Take $\epsilon^{*} = \mathrm{min}\{\epsilon_1,\ldots,\epsilon_q\}$. Let $\delta > 0$ and suppose that we have the following bound $N^{\alpha}/(N\epsilon^{*}) = O(N^{-\delta})$. Then for all Schwartz $\phi$, the following Gaussian estimate holds as $N \to \infty$
\begin{equation}
\mathbb{E}\left[\mathrm{exp}\left(\sum_{j=1}^{N}h_{\mathbf{u},\boldsymbol{\epsilon}}(N^{\alpha}\theta_j)-\mathbb{E}\left(\sum_{j=1}^{N}h_{\mathbf{u},\mathbf{\epsilon}}(N^{\alpha}\theta_j)\right)\right)\right] = \mathrm{exp}\left(\frac{\|h_{\mathbf{u},\boldsymbol{\epsilon}}\|^{2}_{H^{1/2}}}{2}\right)(1+\mathcal{E}^{\mathrm{S}}_{N})e^{\mathcal{E}_{N}^{\mathrm{glob}}} \label{cueGaussEst}
\end{equation}
where for any $\eta>0$, the \textit{smoothing} error term satisfies the bound
\begin{equation}
|\mathcal{E}_{N}^{\mathrm{S}}| \leq C\gamma^{2}\|t\|_{1}^{\eta_2}N^{-\eta}\mathrm{exp}\left(\gamma^{2}\|t\|_{1}^{\eta_2}N^{-\eta}\right) \label{gausscueerror}
\end{equation}
for some $\eta_2>0$, while the \textit{global} error term satisfies
\begin{equation}
|\mathcal{E}_{N}^{\mathrm{glob}}| \leq C\|t\|_{1}^{2}\log(\ell/\epsilon^{*})\ell N^{-\alpha} \label{globerr}
\end{equation}
uniformly in compact subsets of the parameters $u_{1},\ldots,u_{q} \in \mathbb{R}$.
\end{proposition}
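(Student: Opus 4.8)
The plan is to express the Laplace transform of the smoothed linear statistic as a Toeplitz determinant and analyze it via the Borodin--Okounkov formula. First I would note that for the CUE, if $f(\theta) = \exp(g(\theta))$ for a sufficiently smooth $2\pi$-periodic function $g$, then $\mathbb{E}_{U(N)}[\prod_j f(e^{i\theta_j})] = D_N(f)$, the $N\times N$ Toeplitz determinant with symbol $f$. Here the relevant symbol is built from $h_{\mathbf{u},\boldsymbol\epsilon}(N^\alpha\theta)$, viewed as a function on the circle; crucially, because of the scaling $N^\alpha$ and the mollification at scale $\epsilon^*$, the symbol is \emph{analytic} in a shrinking strip and its Fourier coefficients $\hat g_k$ decay like those of a smooth function, with the effective ``width'' controlled by $N^\alpha/(N\epsilon^*) = O(N^{-\delta})$. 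The leading term of $\log D_N(f)$ is the strong Szeg\H{o} asymptotics $N\hat g_0 + \sum_{k\ge1} k |\hat g_k|^2 = N\hat g_0 + \tfrac12\|g\|_{H^{1/2}}^2$, which after centering gives exactly the claimed main term $\exp(\tfrac12\|h_{\mathbf u,\boldsymbol\epsilon}\|_{H^{1/2}}^2/2)$.

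Next I would invoke the Borodin--Okounkov formula, which writes $D_N(f)/D_\infty(f) = \det(\mathrm{I} - K_N)$ where $D_\infty(f) = \exp(\sum_{k\ge1} k\hat g_k \hat g_{-k})$ is the Szeg\H{o} constant and $K_N$ is an explicit Hankel-type operator on $\ell^2(\{N, N+1, \dots\})$ whose entries are expressed through the Fourier coefficients of $g_\pm$ (the Wiener--Hopf factors of $f$). The strategy is then purely quantitative: bound $\det(\mathrm{I}-K_N) = 1 + \mathcal{E}_N^{\mathrm S}$ by estimating $\|K_N\|_{\text{trace}}$ (or Hilbert--Schmidt norm) in terms of the tail sums $\sum_{k\ge N} |\hat g_k^{\pm}|$. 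Because $g$ is $N^\alpha$-rescaled and $\phi$ is Schwartz, for any $\eta$ one has $|\hat g_k| \ll \|t\|_1 (N^\alpha/(kN\epsilon^*) )^{M}$ for arbitrary $M$, so the tail beyond $k=N$ is $O(\|t\|_1^{?} N^{-\eta})$, which after exponentiating the determinant bound yields \eqref{gausscueerror} with the stated dependence on $\|t\|_1$ and $\gamma$. The linearity of $g$ in $\mathbf t$ and the product structure of the mollifier convolution let one track the $\|t\|_1$-dependence explicitly.

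The remaining piece is the global error $\mathcal{E}_N^{\mathrm{glob}}$, which comes from replacing the exact Szeg\H{o} constant $\sum_{k\ge1} k\hat g_k\hat g_{-k}$ (a discrete sum over integers) by the continuous $H^{1/2}$-norm $\tfrac12\|h_{\mathbf u,\boldsymbol\epsilon}\|_{H^{1/2}(\R)}^2 = \int_\R |\kappa|\,|\hat h(\kappa)|^2 d\kappa$. This is an Euler--Maclaurin / Poisson-summation type comparison: the difference between the Riemann sum at spacing $N^{-\alpha}$ (after undoing the rescaling) and the integral is controlled by the modulus of continuity of $\kappa \mapsto |\kappa|\,\widehat{h_{\mathbf u,\boldsymbol\epsilon}}(\kappa)$, which has a mild singularity at $\kappa=0$ (the $|\kappa|$ factor) and is supported essentially on $|\kappa| \lesssim 1/\epsilon^*$, with the interval-length $L(N)$ entering through the $\sin^2(\pi L \kappa)$-type factor in $\widehat{\chi_u*\phi_\epsilon}$. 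Carefully bounding this gives the logarithmic factor $\log(L(N)/\epsilon^*)$ times $L(N) N^{-\alpha}$ and the quadratic $\|t\|_1^2$ dependence, uniformly in $\mathbf u$ in compacts. I expect the main obstacle to be precisely this last step: getting the \emph{uniform} control in $\mathbf u$ and the sharp $L(N)$-dependence of the sum-vs-integral discrepancy, since the naive bound loses the logarithm; one needs to split the frequency range into $|\kappa| \lesssim 1/L$, $1/L \lesssim |\kappa| \lesssim 1/\epsilon^*$, and the rapidly-decaying tail, and treat the contribution of the $|\kappa|$-singularity and the oscillatory $\sin^2$ factor separately. The trace-norm estimate on $K_N$ in Borodin--Okounkov is comparatively routine once the Fourier decay of the Wiener--Hopf factors is in hand, since they inherit the Schwartz-type decay of $g$.
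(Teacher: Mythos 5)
Your overall architecture matches the paper's proof exactly: periodize, write the Laplace transform as a Toeplitz determinant, apply Borodin--Okounkov, bound the Fredholm correction via a Hankel-operator norm, and then compare the discrete Szeg\H{o} constant to the $H^{1/2}$-integral by a Riemann-sum error estimate (which is what produces the $\log(L/\epsilon^*)\,L\,N^{-\alpha}$ global term). The decomposition into a smoothing error and a global error is also exactly how the paper organizes the argument (propositions~\ref{macrocue} and~\ref{mesoapprox}).

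The one place where you are underestimating the work is the sentence ``the trace-norm estimate on $K_N$ in Borodin--Okounkov is comparatively routine once the Fourier decay of the Wiener--Hopf factors is in hand, since they inherit the Schwartz-type decay of $g$.'' The decay of $\hat g_k$ does \emph{not} transfer directly to the Fourier coefficients of the exponentiated symbol $c(\theta)=\exp\!\big(\sum_{k\ge1}\hat L_k e^{ik\theta}-\sum_{k\ge1}\hat L_{-k}e^{-ik\theta}\big)$: convolving $m$ copies of a sequence essentially supported on $|k|\lesssim N^\alpha/\epsilon^*$ spreads mass to arbitrarily high frequencies, so a naive argument loses control. The paper circumvents this by observing that $c(\theta)=e^{2iu(\theta)}$ is unimodular with $|u^{(l)}(\theta)|\ll \|t\|_1(N^\alpha/\epsilon^*)^l$, and then extracts the decay of $c_{k+N}$ by integrating by parts $p$ times in \eqref{stationaryphase}, giving $|c_{k+N}|\ll(\|t\|_1 N^\alpha/\epsilon^*/(k+N))^p$; this is then fed into the exact Hilbert--Schmidt identity $\|H(\tilde c)R_N\|_2^2=\sum_k k|c_{k+N}|^2$. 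The hypothesis $N^\alpha/(N\epsilon^*)=O(N^{-\delta})$ is what makes this geometric ratio small. So the conclusion you reach is correct, but the mechanism is a stationary-phase / repeated-integration-by-parts argument on the \emph{exponential} of the symbol rather than a ``Schwartz decay is inherited'' principle, and you should flag it as the nontrivial step rather than the routine one. Your treatment of $\mathcal E_N^{\mathrm{glob}}$ (sum-vs-integral with the $|\kappa|\sin^2(\pi L\kappa)$ structure, split into three frequency ranges) is essentially what the paper's proposition~\ref{mesoapprox} does via a total-variation bound on the integrand.
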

With the Gaussian approximation above we may complete the proofs of the main theorems for the CUE.
\begin{proof}[Proof of Theorems \ref{th:gmc} and \ref{th:moments}]
Proposition \ref{prop:cue} shows that the regularized statistic \eqref{smoothing} satisfies the strong Gaussian approximation as in Assumption \ref{exp_moments} for all $q\in \N$ (hence Assumption \ref{cvg} holds as well). The fact that the associated covariance kernel satisfies Assumption \ref{kernel_asymp} is a consequence of the computations carried out in section~\ref{sect:covariance}, namely Theorem \ref{thm:covariance}. Therefore we deduce Theorems \ref{th:gmc} and \ref{th:moments} as corollaries of Theorems \ref{thm:L^1_phase} and \ref{thm:L^2_phase} respectively.
\end{proof}

\begin{remark}
The error \eqref{gausscueerror} is called the smoothing error because it fails when $\epsilon \to 0$ quickly enough that $\epsilon \sim N^{\alpha}/N$. Beyond this regime the asymptotics in \eqref{cueGaussEst} are no longer valid and one enters the transition regime of Fisher-Hartwig asymptotics, see \cite{Kra11, DIK13} for a review. On the other hand, \eqref{globerr} is controlled by the relative numbers of eigenvalues sampled by the statistic and is only small in the mesoscopic regime $0 < \alpha < 1$. 
\end{remark}
As for the sine process and Proposition \ref{thm:sine_asymp}, the proof of Proposition \ref{prop:cue} is also based on the existence of integrable and determinantal structures in the CUE. In fact the Laplace transform of \eqref{smoothing} could also be written as a Fredholm determinant which could then be analysed with an appropriate Riemann-Hilbert problem (as was done in \cite{Deift99} for the global scale $\alpha=0$ and $\epsilon>0$ fixed). However, for the CUE we will give a more elementary proof that does not involve Riemann-Hilbert techniques, but instead relies on an `algebraic miracle' known (for historical reasons) as the Borodin-Okounkov-Case-Geronimo formula. Unlike our Riemann-Hilbert computation, the proof we give here does not impose any analyticity condition on the mollifier $\phi$, but we do require it to be smooth in general.	Another crucial difference is that the CUE has a macroscopic regime (defined by \eqref{smoothing} with $\alpha=0$) which has no analogue for the sine process.

When working with the CUE it is convenient to expand such functions in a Fourier series. Therefore in what follows we are going to work with the periodisation
\begin{equation}
h^{(2\pi)}_{\mathbf{u},\boldsymbol{\epsilon}}(\theta) = \sum_{a=-\infty}^{\infty}h_{\mathbf{u},\boldsymbol{\epsilon}}(N^{\alpha}(\theta+2\pi a)).
\end{equation}
Then $h^{(2\pi)}_{\mathbf{u},\boldsymbol{\epsilon}}(\theta+2\pi) = h^{(2\pi)}_{\mathbf{u},\boldsymbol{\epsilon}}(\theta)$ is $2\pi$-periodic. The periodisation has the convenient property that its Fourier coefficients are given explicitly in terms of the Fourier transform of $h_{\mathbf{u},\boldsymbol{\epsilon}}$. 
\begin{equation}
\frac{1}{2\pi}\int_{-\pi}^{\pi}h^{(2\pi)}_{\mathbf{u},\boldsymbol{\epsilon}}(\theta)e^{-ik\theta}\,d\theta = N^{-\alpha}\int_{\mathbb{R}}h_{\mathbf{u},\boldsymbol{\epsilon}}(2\pi x)e^{-2i\pi kxN^{-\alpha}}\,dx \label{fouriercoeffs}
\end{equation}
Furthermore, the linear statistics of $h^{(2\pi)}_{\mathbf{u},\boldsymbol{\epsilon}}(\theta)$ are uniformly close to those of $h_{\mathbf{u},\boldsymbol{\epsilon}}(\theta)$. This follows from the rapid decay of $\phi$, since the difference between the two is deterministically bounded by 
\begin{equation}
\sum_{|a|>0}\sum_{j=1}^{N}\sum_{k=1}^{q}\epsilon_{k}^{\eta-1}\int_{x_{j}-u_{k}-l/2}^{x_{j}-u_{k}+l/2}|s|^{-\eta}\,ds\leq \|t\|cN(\epsilon_{*}/N^{\alpha})^{\eta} \label{deterror}
\end{equation}
where $x_{j} = N^{\alpha}(\theta_j+2\pi a)$ and we used that $lN^{-\alpha} \to 0$ as $N \to \infty$. Choosing $\eta>0$ large enough we can always ensure that \eqref{deterror} goes to $0$ as $N \to \infty$. Hence it will suffice to always work with the periodisation. 
\begin{proposition}[Macroscopic approximation]
	\label{macrocue}
Define the quantity
\begin{equation}
E_{\mathbf{u},\mathbf{\epsilon}} := \frac{1}{(2\pi)^{2}}N^{-\alpha}\sum_{k=1}^{\infty}kN^{-\alpha}|\hat h_{\mathbf{u},\boldsymbol{\epsilon}}(k/(2\pi N^{\alpha}))|^{2} \label{macrocov}
\end{equation}
and suppose that the hypotheses of Proposition \ref{prop:cue} are satisfied. Then we have the Gaussian estimate
\begin{equation}
\label{expcue}
\mathbb{E}\left[\mathrm{exp}\left(\sum_{j=1}^{N}h^{(2\pi)}_{\mathbf{u},\boldsymbol{\epsilon}}(\theta_j)-\mathbb{E}\left(\sum_{j=1}^{N}h^{(2\pi)}_{\mathbf{u},\mathbf{\epsilon}}(\theta_j)\right)\right)\right] = \mathrm{exp}\left(E_{\mathbf{u},\boldsymbol{\epsilon}}\right)(1+\mathcal{E}_{N}^{\mathrm{S}})
\end{equation}
where the error term $\mathcal{E}_{N}^{\mathrm{S}}$ satisfies \eqref{gausscueerror} and is uniform in the variables $u_{1},\ldots,u_{q}$ varying in compact subsets of $\mathbb{R}$.
\label{strongcueprop}
\end{proposition}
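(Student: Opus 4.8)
\emph{Proof strategy.}
The plan is to recognise the left-hand side of \eqref{expcue} as a centred Toeplitz determinant, apply the Borodin--Okounkov formula to produce the exact Szeg\H{o}-type main term $\exp(E_{\mathbf{u},\boldsymbol{\epsilon}})$, and then control the remaining Fredholm determinant by a trace-norm estimate that exploits the smoothness of the mollified symbol. Set $f=e^{h^{(2\pi)}_{\mathbf{u},\boldsymbol{\epsilon}}}$; since $h^{(2\pi)}_{\mathbf{u},\boldsymbol{\epsilon}}$ is a real-valued, smooth, $2\pi$-periodic function, $f$ is strictly positive with winding number zero and $\log f$ lies in the Szeg\H{o} class. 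Writing $\{c_k\}$ for its Fourier coefficients, the Heine--Szeg\H{o} identity for the CUE gives $\E{\prod_j f(\theta_j)}=D_N(f)$, while $\E{\sum_j h^{(2\pi)}_{\mathbf{u},\boldsymbol{\epsilon}}(\theta_j)}=Nc_0$, so the left-hand side of \eqref{expcue} equals $e^{-Nc_0}D_N(f)$. By the Borodin--Okounkov formula \cite{BO00,Sim05},
\[
D_N(f)=e^{Nc_0}\exp\!\Big(\sum_{k\ge1}k\,c_kc_{-k}\Big)\det\big[\Id-K_N\big],
\]
where $K_N$ acts on $\ell^2(\{N,N+1,\dots\})$ and factors as a product of two Hankel operators built from the Wiener--Hopf factors $f_+=\exp(\sum_{k>0}c_kz^k)$ and $f_-=\exp(\sum_{k<0}c_kz^k)$. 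Because $h^{(2\pi)}_{\mathbf{u},\boldsymbol{\epsilon}}$ is real, $c_{-k}=\overline{c_k}$, and the Fourier identity \eqref{fouriercoeffs} shows that $\sum_{k\ge1}k\,c_kc_{-k}=\sum_{k\ge1}k|c_k|^2=E_{\mathbf{u},\boldsymbol{\epsilon}}$ as defined in \eqref{macrocov}. Hence the left-hand side of \eqref{expcue} is exactly $\exp(E_{\mathbf{u},\boldsymbol{\epsilon}})\det[\Id-K_N]$, and it remains only to show that $\mathcal{E}_{N}^{\mathrm{S}}:=\det[\Id-K_N]-1$ satisfies \eqref{gausscueerror}, uniformly in the centres $u_1,\dots,u_q$.

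For this I would use the Fredholm bound $|\det[\Id-K_N]-1|\le\|K_N\|_{1}e^{\|K_N\|_{1}}$ together with $\|K_N\|_{1}\le\|P_NH(f_+/f_-)\|_{2}\,\|H(f_-/f_+)P_N\|_{2}$, where $\|\cdot\|_1,\|\cdot\|_2$ are the trace and Hilbert--Schmidt norms and $P_N$ the coordinate projection onto indices $\ge N$; each Hilbert--Schmidt norm is a weighted tail $\|P_NH(g)\|_2^2\le\sum_{m>N}m|\hat g_m|^2$, so everything reduces to bounding the high-frequency Fourier coefficients of $g=f_\pm/f_\mp$. Two ingredients enter here: first, since $\phi$ is Schwartz, \eqref{fouriercoeffs} and the convolution theorem give $|c_k|\le C_M\gamma\|t\|_1(1+|k|)^{-1}(1+\epsilon^{*}|k|/N^{\alpha})^{-M}$ for every $M$, uniformly in $u_1,\dots,u_q$ (shifting a centre only multiplies $c_k$ by a unimodular factor); second, the hypothesis $N^{\alpha-1}/\epsilon^{*}=O(N^{-\delta})$ of proposition~\ref{prop:cue} says that the effective bandwidth $N^{\alpha}/\epsilon^{*}$ of the statistic is $O(N^{1-\delta})$, hence much smaller than the Borodin--Okounkov threshold $N$. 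Writing $g=e^{V_+}e^{-V_-}$ with $V_\pm$ the analytic and anti-analytic parts of $\log g$, one bounds $\|e^{\pm V_\pm}\|_{L^2}$ by a constant depending only on $\gamma\|t\|_1$ (via John--Nirenberg, since $\|\Re V_\pm\|_{\mathrm{BMO}}\ll\|h^{(2\pi)}_{\mathbf{u},\boldsymbol{\epsilon}}\|_\infty\ll\gamma\|t\|_1$, with no $N$-dependence) and, using repeated integration by parts together with the derivative bounds $\|(h^{(2\pi)}_{\mathbf{u},\boldsymbol{\epsilon}})^{(p)}\|_\infty\ll_{p}\gamma\|t\|_1(N^{\alpha}/\epsilon^{*})^{p}$ (a consequence of $\phi$ being Schwartz), one obtains $|\hat g_m|\ll_{M}N^{-M\delta}$ for all $m\ge N$ and every $M$, the implied constant depending only on $M$ and $\gamma\|t\|_1$. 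Summing and choosing $M$ large then yields $\|K_N\|_{1}\ll N^{-\eta}$ for every $\eta>0$, with polynomial control in $\gamma\|t\|_1$, which after substitution into the Fredholm bound gives an estimate of the form \eqref{gausscueerror}.

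The main obstacle, and the only place the hypotheses of proposition~\ref{prop:cue} genuinely enter, is the trace-norm estimate of the previous paragraph: one must quantitatively separate the effective bandwidth $N^{\alpha}/\epsilon^{*}$ of the regularised statistic from the Borodin--Okounkov threshold $N$, and the condition $N^{\alpha-1}/\epsilon^{*}=O(N^{-\delta})$ is exactly what creates this gap. Without it one leaves the strong Szeg\H{o} regime and enters the merging-singularity (Fisher--Hartwig transition) regime, where $\det[\Id-K_N]$ no longer tends to $1$ and \eqref{expcue} fails, as noted in the remark following proposition~\ref{prop:cue}. A secondary, purely bookkeeping difficulty is that several natural norms of $V_\pm$ (for instance their $\ell^1$ Fourier norms) grow like a power of $\log N$, so one has to keep the super-polynomial $N$-decay coming from the Schwartz tail of $\hat\phi$ separate from, and dominant over, these mildly growing factors as well as the $\gamma\|t\|_1$-dependent constants coming from exponentiating $V_\pm$; tracking these dependencies carefully is what produces the explicit error \eqref{gausscueerror}. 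Uniformity in $u_1,\dots,u_q$ over compact sets is automatic, since every bound above depends on the centres only through $\|t\|_1=\sum_k|t_k|$.
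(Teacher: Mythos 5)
Your proposal follows the same route as the paper: Weyl integration $\Rightarrow$ Toeplitz determinant $\Rightarrow$ Borodin--Okounkov formula, then bound the Fredholm determinant $\det[\Id-K_N]$ by the Hilbert--Schmidt norms of the restricted Hankel operators, reducing everything to the high-frequency Fourier coefficients of the Wiener--Hopf quotient, which are killed by repeated integration by parts using the derivative bounds $\|(h^{(2\pi)})^{(p)}\|_\infty \ll_p \gamma\|t\|_1(N^\alpha/\epsilon^*)^p$ coming from the Schwartz decay of $\hat{\phi}$. The paper also notes that $-K_N = -R_N H(b)H(\tilde c)R_N$ is nonpositive (because $H(b)=H(\tilde c)^\dagger$), so the trace norm equals $\|H(\tilde c)R_N\|_2^2 = \sum_{k\ge1}k|c_{k+N}|^2$ directly, matching your bound.

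One small simplification you may have missed, which makes part of your argument unnecessary: since $h^{(2\pi)}$ is real-valued, $\hat L_{-k}=\overline{\hat L_k}$, so the ratio $c(\theta)=f_-/f_+=\exp\big(\sum_{k>0}\hat L_ke^{ik\theta}-\sum_{k>0}\overline{\hat L_k}e^{-ik\theta}\big)$ is a purely imaginary exponential and hence unimodular, $|c(\theta)|=1$. You can therefore integrate by parts on $\int e^{-i(k+N)\theta}c(\theta)\,d\theta$ directly, with all the $|e^{\cdots}|$ factors equal to $1$, and there is no need to re-split $\log g$ into $V_\pm$, control $\|e^{\pm V_\pm}\|_{L^2}$ via John--Nirenberg, or worry about cross-convolutions of the two half-exponentials. (In fact $\Re V_+=\tfrac12(h^{(2\pi)}-\hat L_0)$ is already uniformly bounded, so BMO/John--Nirenberg is overkill even if one did split; and $\Im V_+$ is a Hilbert transform whose $L^\infty$ norm may not stay uniformly bounded in $N$, which would make the split estimates noticeably more delicate than the direct one.) Apart from this detour the argument is correct and matches the paper's Lemma~\ref{bo-estimate}: the hypothesis $N^{\alpha-1}/\epsilon^*=O(N^{-\delta})$ is used exactly where you say, to keep the effective bandwidth $N^\alpha/\epsilon^*$ polynomially below the projection threshold $N$, and the uniformity in $u_1,\dots,u_q$ over compacts is automatic because shifting the centres only changes the Fourier coefficients by unimodular phases.
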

To prove Proposition \ref{strongcueprop}, the main idea is to exploit the fact that for the CUE, the left-hand side of \eqref{expcue} can be written exactly as an $N \times N$ Toeplitz determinant involving the Fourier coefficients of the periodic function $w_{\mathbf{u},\mathbf{\epsilon}}(\theta) = e^{h^{(2\pi)}_{\mathbf{u},\mathbf{\epsilon}}(\theta)}$. The representation as a determinant follows from the following well known but remarkable chain of equalities for the (un-centered) left-hand side of \eqref{cueGaussEst}:
\begin{align}
&\mathbb{E}\left[\mathrm{exp}\left(\sum_{j=1}^{q}X_{N,\epsilon_N}(u_j)\right)\right] \label{expcuetoweyl}\\
&= \frac{1}{(2\pi)^{N}N!}\int_{-\pi}^{\pi}\ldots \int_{-\pi}^{\pi}\prod_{j=1}^{N}w(\theta_{j})\prod_{1 \leq p < q \leq N}|e^{i\theta_{q}}-e^{i\theta_{p}}|^{2}d\theta_{1} \ldots d\theta_{N} \label{weyl}\\
&=\frac{1}{(2\pi)^{N}N!}\int_{-\pi}^{\pi}\ldots\int_{-\pi}^{\pi}\det\{w(\theta_{j})e^{i(k-1)\theta_{j}}\}_{j,k=1}^{N}\det\{e^{-i(k-1)\theta_{j}}\}_{j,k=1}^{N}d\theta_{1} \ldots d\theta_{N} \label{vander}\\
&=\det\bigg\{\frac{1}{2\pi}\int_{-\pi}^{\pi}w(\theta)e^{-i(k-j-2)\theta}\,d\theta\bigg\}_{j,k=1}^{N} \label{andre}\\
&= \det T_{N}(w) \label{toeplitz}
\end{align}
where $T_{N}(w)$ is the $N \times N$ Toeplitz matrix $\{\hat{w}_{k-j-2}\}_{j,k=1}^{N}$. That \eqref{expcuetoweyl} equals \eqref{weyl} is a consequence of the Weyl integration formula for the unitary group. Then \eqref{vander} writes the product of differences in \eqref{weyl} as a product of two determinants and finally \eqref{andre} is a consequence of the Andrejeff identity. 

Thus our task will be to calculate the asymptotics of the Toeplitz determinant in \eqref{toeplitz} as $N \to \infty$. The function $w(\theta)$ is called the \textit{symbol}. For smooth and $N$-independent symbols, such asymptotics are well known from the strong Szeg\H{o} limit theorem. However, our symbol is $N$-dependent and furthermore the quantity $E_{\mathbf{u},\mathbf{\epsilon}}$ is divergent in the limit $N \to \infty$. This is due to the fact that our symbol becomes discontinuous as $\epsilon_{N} \to 0$ and therefore does not belong to the $H^{1/2}$-space. The following formula is our main tool for establishing the strong Gaussian approximation in the CUE.
\begin{theorem}[Borodin-Okounkov-Case-Geronimo formula]
Let $w(\theta)$ a periodic function on the interval $\theta \in [0,2\pi)$ and that $\log w(\theta)$ has Fourier coefficients $\hat{L}_{k}$. Suppose that we have the expansion
\begin{equation}
\log w(\theta) = \sum_{k=-\infty}^{\infty}\hat{L}_{k}e^{ik\theta}, \qquad \sum_{k=1}^{\infty}k|\hat{L}_{k}|^{2} < \infty
\end{equation}
In terms of the quantities $b(\theta) = 1/c(\theta)$ and
\begin{equation}
c(\theta) = \mathrm{exp}\left(\sum_{k=1}^{\infty}\hat{L}_{k}e^{ik\theta}-\sum_{k=1}^{\infty}\hat{L}_{-k}e^{-ik\theta}\right) \label{ctheta}
\end{equation}
we have the following identity
\begin{equation}
\frac{\det(T_{N}(w))}{\mathrm{exp}\left(N\hat{L}_{0}+\sum_{k=1}^{\infty}k|\hat{L}_{k}|^{2}\right)} = \mathrm{det}(I-R_{N}H(b)H(\tilde{c})R_{N}) \label{BOformula}
\end{equation}
where $R_{N}$ is the projection operator on $\ell^{2}(N+1,N+2,\ldots)$ and $H(b)$, $H(\tilde{c})$ are infinite Hankel matrices corresponding to the sequence of Fourier coefficients of $b(\theta)$ and $c(\theta)$,
\begin{equation}
	H(b) = \begin{pmatrix} 
		b_{1} & b_{2} & b_{3} \ldots\\
		b_{2} & b_{3} & b_{4} \ldots\\
		b_{3} & b_{4} & b_{5} \ldots\\
		\ldots & \ldots & \ldots
		\end{pmatrix}, 
		\qquad
		H(\tilde{c}) = \begin{pmatrix} 
			c_{-1} & c_{-2} & c_{-3} \ldots\\
			c_{-2} & c_{-3} & c_{-4} \ldots\\
			c_{-3} & c_{-4} & c_{-5} \ldots\\
			\ldots & \ldots & \ldots.
		\end{pmatrix}
\end{equation}
\end{theorem}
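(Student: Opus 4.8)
The plan is to follow the operator-theoretic proof of the Borodin--Okounkov identity (the one in Simon's book \cite{Sim05}, going back to Borodin--Okounkov and to Basor--Widom), which rewrites the Toeplitz determinant as a finite-section determinant and manipulates it with the algebra of Toeplitz and Hankel operators on $\ell^2(\mathbb{Z}_{\ge 0})$. First I would normalise: replacing $w$ by $we^{-\hat L_0}$ multiplies $\det T_N(w)$ by $e^{-N\hat L_0}$ and leaves $b,c$ and $\sum_{k\ge1}k|\hat L_k|^2$ untouched, so it suffices to treat $\hat L_0=0$. Next introduce the Wiener--Hopf split $\log w = L_-+L_+$ with $L_+=\sum_{k\ge1}\hat L_k e^{ik\theta}$ and $L_-=\sum_{k\ge1}\hat L_{-k}e^{-ik\theta}$, and set $w_\pm=e^{L_\pm}$, so that $w=w_-w_+$, each $w_\pm$ has constant Fourier coefficient $1$, $w_+$ and $w_+^{-1}$ are analytic and bounded on the disc while $w_-$ and $w_-^{-1}$ are analytic and bounded outside it, and the functions of the statement are $c=w_+w_-^{-1}$ and $b=c^{-1}=w_-w_+^{-1}$. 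Finally I would record that the chain \eqref{expcuetoweyl}--\eqref{toeplitz}, read backwards, is just $\det T_N(w)=\det(P_N T(w)P_N)$, where $T(w)$ is the semi-infinite Toeplitz operator with matrix $(\hat w_{j-k})_{j,k\ge0}$ and $P_N$ is the projection onto the first $N$ coordinates; write $Q_N=I-P_N$, which is the projection called $R_N$ in the statement up to the index shift built into \eqref{andre}.

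The computational heart is the Widom calculus $T(fg)=T(f)T(g)+H(f)H(\tilde g)$ (with companion $H(fg)=T(f)H(g)+H(f)T(\tilde g)$), together with the vanishings $H(f)=0$ when $f$ has only non-positive Fourier modes and $H(\tilde g)=0$ when $g$ has only non-negative ones. These give $T(w)=T(w_-)T(w_+)$, the invertibility $T(w_\pm)^{-1}=T(w_\pm^{-1})$, hence $T(w)^{-1}=T(w_+^{-1})T(w_-^{-1})$, and $T(w)T(w^{-1})=I+H(w)H(\widetilde{w^{-1}})$, which is $I$ plus a trace-class operator once $\sum_{k\ge1}k|\hat L_k|^2<\infty$. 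Because $\hat L_0=0$, $T(w_-)$ is upper triangular and $T(w_+)$ lower triangular with unit diagonals, so all the finite sections $P_N T(w_\pm^{\pm1})P_N$ have determinant $1$ and inverse $P_N T(w_\pm^{\mp1})P_N$; combined with $T(w)=T(w_-)T(w_+)$ and $I=P_N+Q_N$ this reduces $\det(P_N T(w)P_N)$ to the Fredholm determinant of $I$ plus a trace-class operator assembled from $T(w_\pm^{\pm1})$, $P_N$ and $Q_N$. The decisive manipulation is then to transport this operator onto the tail subspace $\mathrm{ran}\,Q_N$: conjugating the tail factors by the shift $W_N$ ($W_Ne_j=e_{j+N}$, $Q_N=W_NW_N^*$, $W_N^*T(a)W_N=T(a)$) and repeatedly applying the $T$--$H$ commutation identities collapses the alternating strings of Toeplitz blocks into the single Hankel operators $H(w_-w_+^{-1})=H(b)$ and $H(\widetilde{w_+w_-^{-1}})=H(\tilde c)$ compressed by $Q_N$. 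One is then left with $\det T_N(w)=e^{N\hat L_0}E(w)\det(I-Q_N H(b)H(\tilde c)Q_N)$, where $E(w)=\exp\!\big(\sum_{k\ge1}k\hat L_k\hat L_{-k}\big)$ is the strong Szeg\H{o} constant (equal to $\exp(\sum_{k\ge1}k|\hat L_k|^2)$ for the real positive symbols that occur here); since $H(b)$ and $H(\tilde c)$ are Hilbert--Schmidt their composition is trace class and the Fredholm determinant makes sense, which is \eqref{BOformula}.

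The step I expect to be genuinely delicate is not one big idea but the bookkeeping in that last paragraph: arranging the shift-conjugations and the $T$--$H$ relations so that the off-diagonal blocks assemble into the \emph{specific} product $H(b)H(\tilde c)$ with exactly the truncation $Q_N$, and pinning down the scalar $E(w)$. The cleanest handle on the constant is to establish the auxiliary identity $\det\big(I+H(w)H(\widetilde{w^{-1}})\big)=E(w)$ separately --- it is the strong Szeg\H{o} theorem in operator form and follows from $T(w)T(w^{-1})=I+H(w)H(\widetilde{w^{-1}})$ together with multiplicativity of the Fredholm determinant and a direct trace computation, or via the Jacobi-type complementary-minor identity $\det(P_NAP_N)\det(Q_NA^{-1}Q_N)=\det A$ applied to $A=T(w)T(w^{-1})$ --- and then feed it in; alternatively one can fix $E(w)$ a posteriori by letting $N\to\infty$ and matching the resulting formula with strong Szeg\H{o}, using $Q_N\to0$ strongly so that $\det(I-Q_NH(b)H(\tilde c)Q_N)\to1$. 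A purely technical secondary point is to justify all the determinant manipulations in the infinite-dimensional setting, i.e.\ trace-class membership of the operators involved, which is guaranteed exactly by the standing regularity $\sum_{k\ge1}k|\hat L_k|^2<\infty$ of $\log w$.
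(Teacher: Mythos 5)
Your sketch is correct and is precisely the operator-theoretic argument of Theorem 6.2.14 in Simon's book \cite{Sim05}, which is what the paper cites in lieu of giving a proof; the normalisation $\hat L_0=0$, the Wiener--Hopf split $w=w_-w_+$, the Widom $T$--$H$ calculus, the shift-conjugation onto $\operatorname{ran}Q_N$, and the identification of the Szeg\H{o} constant are exactly the steps there. The one point worth flagging is that you correctly note the general constant is $\exp\big(\sum_{k\ge1}k\hat L_k\hat L_{-k}\big)$ and reduces to $\exp\big(\sum_{k\ge1}k|\hat L_k|^2\big)$ only because the symbol is real, which is the form stated in the paper.
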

\begin{proof}
There are many proofs in the literature, for example the one of Borodin and Okounkov \cite{BO00} led to an increased interest in the formula. For a comprehensive proof under the conditions mentioned here see Simon \cite[Theorem 6.2.14]{Sim05}, which also provides a detailed historical discussion of \eqref{BOformula} and its original discovery in \cite{GC79}.
\end{proof}
Note that here $\log w(\theta) = h^{(2\pi)}_{\mathbf{u},\boldsymbol{\epsilon}}$ and the quantity $\sum_{k=1}^{\infty}k|\hat{L}_{k}|^{2}$ in \eqref{BOformula} is precisely $E_{\mathbf{u},\boldsymbol{\epsilon}}$ in \eqref{macrocov}. Furthermore, an easy computation shows that $N\hat{L}_{0} = \mathbb{E}(\sum_{j=1}^{N}h^{(2\pi)}_{\mathbf{u},\boldsymbol{\epsilon}}(\theta_j))$ which corresponds to a re-centering by the expectation. Hence formula \eqref{BOformula} implies
\begin{equation}
\mathbb{E}\left[\mathrm{exp}\left(\sum_{j=1}^{N}h^{(2\pi)}_{\mathbf{u},\boldsymbol{\epsilon}}(\theta_j)-\mathbb{E}\left(\sum_{j=1}^{N}h^{(2\pi)}_{\mathbf{u},\mathbf{\epsilon}}(\theta_j)\right)\right)\right] = \mathrm{exp}\left(E_{\mathbf{u},\boldsymbol{\epsilon}}\right)\mathrm{det}(I-R_{N}H(b)H(\tilde{c})R_{N}).
\end{equation}
The next lemma gives us the necessary estimate on the above determinant, thus concluding the proof of Proposition \ref{macrocue}.
\begin{lemma}
Suppose the hypotheses of Proposition \ref{prop:cue} are satisfied. Then for any $\eta>0$, there exists $\eta_2>0$ such that we have the following estimate
\begin{equation}
|\det(1-R_{N}H(b)H(\tilde{c})R_{N})-1| \leq C\gamma^{2}\|t\|_{1}^{\eta_2}N^{-\eta}\mathrm{exp}\left(\gamma^{2}\|t\|_{1}^{\eta_2}N^{-\eta}\right)
\end{equation}	
uniformly for variables $u_{1},\ldots,u_{q}$ belonging to a compact subset of $\mathbb{R}$.
\label{bo-estimate}
\end{lemma}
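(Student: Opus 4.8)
The plan is to bound $\det(1-R_N H(b)H(\tilde c)R_N)$ by controlling the trace norm of the operator $R_N H(b)H(\tilde c) R_N$, using the inequality $|\det(1-A)-1| \le \|A\|_1 e^{\|A\|_1}$ valid for trace-class $A$. Since $R_N H(b)H(\tilde c) R_N$ is a product, I would write $\|R_N H(b)H(\tilde c) R_N\|_1 \le \|R_N H(b)\|_2 \, \|H(\tilde c) R_N\|_2$ by Cauchy--Schwarz for Schatten norms, so everything reduces to estimating the two Hilbert--Schmidt norms $\|R_N H(b)\|_2^2 = \sum_{j,k \ge 1} |b_{N+j+k-1}|^2$ and similarly for $\tilde c$. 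Thus the whole lemma comes down to a good decay estimate on the Fourier coefficients $b_m, c_{-m}$ for $m > N$.

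The key step is therefore: \emph{estimate the tail Fourier coefficients of $c(\theta)$ and $b(\theta) = 1/c(\theta)$}. Here $c(\theta) = \exp(\sum_{k\ge1}\hat L_k e^{ik\theta} - \sum_{k\ge1}\hat L_{-k}e^{-ik\theta})$ with $\log w = h^{(2\pi)}_{\mathbf u,\boldsymbol\epsilon}$, so $\hat L_k$ is (a multiple of) the Fourier coefficient $\widehat{h}_{\mathbf u,\boldsymbol\epsilon}(k/(2\pi N^\alpha))$ rescaled as in \eqref{fouriercoeffs}. First I would record that, because $\phi$ is Schwartz and $\J_u \star \phi_{\epsilon_k}$ is supported essentially on scale $\ell$ with a mollification at scale $\epsilon_k$, the function $h_{\mathbf u,\boldsymbol\epsilon}$ has Fourier transform satisfying, for every $\eta>0$, a bound like $|\widehat h_{\mathbf u,\boldsymbol\epsilon}(\xi)| \ll \gamma\|t\|_1 (\ell \wedge 1)(1+|\epsilon^* \xi|)^{-\eta}(1+|\xi|)^{-1}$ (the extra $|\xi|^{-1}$ from the jump of the indicator, the rapid decay from $\phi$ kicking in past the scale $1/\epsilon^*$). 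Rescaling, $\hat L_k$ decays rapidly once $|k| \gtrsim N^\alpha/\epsilon^*$; since by hypothesis $N^\alpha/(N\epsilon^*) = O(N^{-\delta})$, the cutoff scale $N^\alpha/\epsilon^*$ is $O(N^{1-\delta})$, comfortably below $N$. Then $c(\theta)$ and $b(\theta)$, being exponentials of rapidly-convergent trigonometric series with the same decay profile (the exponential of a smooth function is smooth, and one controls its coefficients via, e.g., the fact that $c$ extends analytically/is $C^\infty$ with quantitative bounds inherited from $\sum k|\hat L_k|^2 < \infty$ and the rapid decay of $\hat L_k$), have Fourier coefficients $c_{-m}, b_m$ that are $O(\gamma\|t\|_1 (\ell\vee1)\, m^{-\eta'})$ for any $\eta'$ once $m$ exceeds a fixed multiple of $N^\alpha/\epsilon^*$. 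Plugging into the Hilbert--Schmidt sums over $j+k-1 > N$ gives $\|R_N H(b)\|_2 \ll \gamma\|t\|_1^{\eta_2} N^{-\eta}$ (choosing the decay exponent in $\phi$'s Schwartz bound large enough, and absorbing powers of $\|t\|_1$ from the exponentiation of the series into the exponent $\eta_2$), and likewise for $\tilde c$. This yields $\|R_N H(b)H(\tilde c)R_N\|_1 \ll \gamma^2 \|t\|_1^{\eta_2} N^{-\eta}$, and the determinant estimate follows from $|\det(1-A)-1|\le \|A\|_1 e^{\|A\|_1}$.

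The main obstacle I anticipate is the passage from the decay of $\hat L_k$ (which is essentially immediate from $\phi$ Schwartz) to the decay of the Fourier coefficients of $c = \exp(\cdots)$ and especially $b = 1/c$, done \emph{quantitatively} and \emph{uniformly} in $u_1,\dots,u_q$ in a compact set and in $\|t\|_1$. Exponentiating a Fourier series does not preserve rapid decay for free in a clean quantitative way; the cleanest route is to note that $h^{(2\pi)}_{\mathbf u,\boldsymbol\epsilon}$ extends to a function analytic in a strip of width $\sim \epsilon^*$ with norm controlled by $C_\phi\|t\|_1(\ell\vee1)$ (this is exactly the kind of estimate already used for the sine process in the analysis around Assumption~\ref{phi_assumption}, though there $\phi$ is analytic; for Schwartz $\phi$ one instead uses that $h^{(2\pi)}_{\mathbf u,\boldsymbol\epsilon}$ is $C^\infty$ with all derivatives bounded by $\gamma\|t\|_1(\ell\vee1)(\epsilon^*)^{-j}$ up to order $\sim \eta$), so that $c$ and $b$ inherit the same quantitative smoothness, whence integration by parts $\eta$ times in $\int c(\theta)e^{im\theta}d\theta$ gives the needed $m^{-\eta}$ bound with the stated dependence on $\gamma,\|t\|_1,\ell$ and on $\epsilon^*$; combined with $N^\alpha/\epsilon^* = O(N^{1-\delta})$ this produces the $N^{-\eta}$ decay after adjusting constants. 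The uniformity in $\mathbf u$ is then automatic since all bounds depend on $u_1,\dots,u_q$ only through the fixed compact set (a translation of the mollifier does not change any $L^1$ or $L^\infty$ norm), and the dependence on $\|t\|_1$ is polynomial at each finite order but is pushed into the exponent $\eta_2$ after using $N^{-\delta}$ to beat any fixed power.
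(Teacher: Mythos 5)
Your proposal is correct and follows essentially the same route as the paper: reduce the Fredholm determinant bound to a Hilbert--Schmidt estimate on the tail of the Hankel operator, then obtain rapid decay of the Fourier coefficients of $c$ by integrating by parts $p$ times, controlling the derivatives of $c$ through those of $h^{(2\pi)}_{\mathbf{u},\boldsymbol{\epsilon}}$ at the scale $N^{\alpha}/\epsilon^{*}$, and using $N^{\alpha}/(N\epsilon^{*})=O(N^{-\delta})$ to beat any fixed power of $N$. The one simplification you overlook is that for a real symbol the exponent in \eqref{ctheta} is purely imaginary, so $|c(\theta)|=1$, hence $b=\overline{c}$ and $H(b)=H(\tilde{c})^{\dagger}$; this makes $-R_{N}H(b)H(\tilde{c})R_{N}$ a nonnegative operator (so $\|A\|_{1}=\|H(\tilde{c})R_{N}\|_{2}^{2}$ exactly, no Cauchy--Schwarz loss) and removes the separate worry about $b=1/c$ that you flag as the main obstacle.
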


\begin{proof}
The following inequality is an easy consequence of standard properties of the Fredholm determinant
\begin{equation}
|\mathrm{det}(I+A)-1| \leq e^{\|A\|_{1}}-1 \label{detbound}
\end{equation}
In our case $A=-R_{N}H(b)H(\tilde{c})R_{N} = -|H(\tilde{c})R_{N}|^{2} \leq 0$ using the fact that $H(b) = H(\tilde{c})^{\dagger}$, so $-A$ is a positive operator. Thus we can write the bound \eqref{detbound} in terms of the Hilbert-Schmidt norm
\begin{equation}
|\mathrm{det}(I-R_{N}H(b)H(\tilde{c})R_{N})-1| \leq e^{\|H(\tilde{c})R_{N}\|_{2}^{2}}-1 \leq \|H(\tilde{c})R_{N}\|_{2}^{2}e^{\|H(\tilde{c})R_{N}\|_{2}^{2}}
\end{equation}
which can be computed explicitly (see 6.2.57 in \cite{Sim05})
\begin{equation}
\|H(\tilde{c})R_{N}\|_{2}^{2} = \sum_{k=1}^{\infty}k|c_{k+N}|^{2}. \label{HSformula}
\end{equation}
We now proceed to estimate the Fourier coefficients of the function $c(\theta)$ in \eqref{ctheta}, which clearly satisfies $|c(\theta)|=1$. We have
\begin{equation}
c_{k+N} = \frac{1}{2\pi}\int_{-\pi}^{\pi}e^{-i(k+N)\theta}e^{2iu(\theta)}\,d\theta \label{stationaryphase}
\end{equation}
where
\begin{equation}
u(\theta) = \Im\bigg\{\sum_{k=1}^{\infty}\hat{L}_{k}e^{ik\theta}-\sum_{k=1}^{\infty}\hat{L}_{-k}e^{-ik\theta}\bigg\}. \label{udef}
\end{equation}
The idea is to exploit cancellations in the integral \eqref{stationaryphase} for large $N$ coming from rapid oscillations of the factor $e^{-i(k+N)}$. To this end, we integrate by parts $p$ times, obtaining
\begin{equation}
c_{k+N} = \frac{2^{p}}{2\pi}\frac{1}{(k+N)^{p}}\int_{-\pi}^{\pi}e^{-i(k+N)}e^{-u(\theta)}\left(\frac{d^{p}}{d\theta^{p}}e^{u(\theta)}\right)e^{2iu(\theta)}\,d\theta \label{parts}
\end{equation}
The function $e^{-u(\theta)}\left(\frac{d^{p}}{d\theta^{p}}e^{u(\theta)}\right)$ is a polynomial in $u(\theta)$ and all its derivatives up to order $p$. We have the explicit formula
\begin{equation}
e^{-u(\theta)}\left(\frac{d^{p}}{d\theta^{p}}e^{u(\theta)}\right) = \sum_{m=1}^{p}\sum_{\substack{k_{1}+2k_{2}+\ldots+pk_{p}=p\\k_{1}+k_{2}+\ldots+k_{p}=m}}c_{p,\vec{k}}\prod_{l=1}^{p}u^{(l)}(\theta)^{k_{l}} \label{expderiv}
\end{equation}
where $c_{p,\vec{k}}$ are some combinatorial coefficients. We now proceed to estimate $u^{(l)}(\theta)$. Clearly the interchange of derivative and sum in \eqref{udef} is valid as the partial sums of all derivatives are uniformly convergent. To calculate the coefficients $c_{k+N}$, note that by \eqref{fouriercoeffs} and the convolution theorem, we have
\begin{align}
|u^{(l)}(\theta)| &\leq 2\sum_{k=1}^{\infty}k^{l}|\hat{L}_{k}| \leq \frac{2\pi\gamma}{N^{\alpha}}\sum_{k=1}^{\infty}k^{l}\sum_{j=1}^{q}|t_{j}|\bigg{|}\frac{e^{-2i\pi(u_{j}+l/2)kN^{-\alpha}}-e^{-2i \pi(u_{j}-l/2)kN^{-\alpha}}}{2\pi kN^{-\alpha}}\bigg{|}|\hat{\phi}(\epsilon_{j} k/(2\pi N^{\alpha}))|\\
&\leq 2\gamma\sum_{j=1}^{q}\rho_{j}^{l-1}\sum_{k=1}^{\infty}(k/\rho_j)^{l-1}|t_{j}||\hat{\phi}(k/(2\pi\rho_j))|\\
&\sim 2\gamma\sum_{j=1}^{q}|t_{j}|\rho_{j}^{l}\int_{0}^{\infty}k^{l-1}|\hat{\phi}(k/(2\pi))|\,dk
\end{align}
where $\rho_{j} = N^{\alpha}/\epsilon_{j} \to \infty$. Hence there exists a constant $C>0$ such that
\begin{equation}
|u^{(l)}(\theta)| \leq C\sum_{j=1}^{q}\rho_{j}^{l}|t_j| \leq C(N^{\alpha}/\epsilon^{*})^{l}\sum_{j=1}^{q}|t_{j}| \label{derivbound}
\end{equation}
uniformly in $u_{1},\ldots,u_{q}$. Inserting \eqref{derivbound} into \eqref{expderiv} yields the estimate
\begin{equation}
\bigg{|}e^{-u(\theta)}\left(\frac{d^{p}}{d\theta^{p}}e^{u(\theta)}\right)\bigg{|} \leq \sum_{m=1}^{p}\sum_{\substack{k_{1}+2k_{2}+\ldots+pk_{p}=p\\k_{1}+k_{2}+\ldots+k_{p}=m}}c_{p,\vec{k}}\prod_{l=1}^{p}(\|t\|N^{\alpha}/\epsilon^{*})^{lk_{l}} = C(\|t\|N^{\alpha}/\epsilon^{*})^{p}
\end{equation}
Inserting this into \eqref{parts} gives the following bound on the Fourier coefficients of $c(\theta)$
\begin{equation}
|c_{k+N}| \leq C\gamma\left(\frac{\|t\|N^{\alpha}/\epsilon^{*}}{k+N}\right)^{p}
\end{equation}
and the corresponding bound on the Hilbert-Schmidt norm
\begin{equation}
\begin{split}
\sum_{k=1}^{\infty}k|c_{k+N}|^{2} &\leq C\gamma\sum_{k=1}^{\infty}k\left(\frac{\|t\|N^{\alpha}/\epsilon^{*}}{k+N}\right)^{2p} \leq C\int_{1}^{\infty}k\left(\frac{\|t\|N^{\alpha}/\epsilon^{*}}{k+N}\right)^{2p}\,dk\\
&= O(N^{2}(\|t\|N^{\alpha}/(\epsilon^{*}N))^{2p}).
\end{split}
\end{equation}
This completes the proof of the lemma.
\end{proof}

The quantity \eqref{macrocov} is close to a Riemann sum. We need good estimates on the error in the approximation, which is the purpose of the next lemma. This error is generically of order $N^{-\alpha}$ in the mesoscopic regime. More generally, for the case of diverging interval length $l = L(N) \to \infty$ the error becomes of order $\log(L(N))L(N)N^{-\alpha}$.
\begin{proposition}[Macroscopic to mesoscopic]
\label{mesoapprox}
Consider the quantity \eqref{macrocov}. We have the uniform approximation
	\begin{equation}
E_{\mathbf{u},\mathbf{\epsilon}} = \frac{1}{(2\pi)^{2}}\int_{0}^{\infty}k|\hat{h}_{\mathbf{u},\mathbf{\epsilon}}(k)|^{2}\,dk+O(\log(L(N)) L(N)N^{-\alpha})
\end{equation}
	\end{proposition}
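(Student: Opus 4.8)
The plan is to treat $E_{\mathbf{u},\boldsymbol{\epsilon}}$ as a Riemann sum and to bound the Riemann‑sum error by the mesh size (which is of order $N^{-\alpha}$) times the $L^1$‑norm of the derivative of the integrand. Writing $\hat h=\hat h_{\mathbf{u},\boldsymbol{\epsilon}}$, $\ell=L(N)$, $\delta_N=(2\pi N^\alpha)^{-1}$ and $\kappa_k=k\delta_N$, the substitution $k=2\pi N^\alpha\kappa$ in \eqref{macrocov} gives, up to the normalising constant,
\begin{equation*}
E_{\mathbf{u},\boldsymbol{\epsilon}}=\frac{1}{(2\pi)^2}\,\delta_N\sum_{k\ge1}G(\kappa_k),\qquad G(\kappa):=\kappa\,|\hat h(\kappa)|^2,
\end{equation*}
so that the asserted leading term is $\frac{1}{(2\pi)^2}\int_0^\infty G(\kappa)\,d\kappa$. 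Since $h_{\mathbf{u},\boldsymbol{\epsilon}}$ is the convolution of an indicator with a Schwartz mollifier, $G$ is $C^1$ on $(0,\infty)$, vanishes at the origin, and $G,G'\in L^1(0,\infty)$, so both sides are finite. The first step is the elementary bound
\begin{equation*}
\Bigl|\delta_N\sum_{k\ge1}G(\kappa_k)-\int_0^\infty G(\kappa)\,d\kappa\Bigr|\le \delta_N\,\|G'\|_{L^1(0,\infty)}+\int_0^{\delta_N}|G(\kappa)|\,d\kappa,
\end{equation*}
obtained by writing the $k$‑th summand as $\int_{\kappa_k}^{\kappa_{k+1}}G(\kappa_k)\,d\kappa$ and using $|G(\kappa_k)-G(\kappa)|\le\int_{\kappa_k}^{\kappa_{k+1}}|G'|$ on that cell. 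The boundary term is $\le\delta_N^2\sup_{[0,\delta_N]}|G|\ll\|t\|_1^2L(N)^2N^{-2\alpha}$ because $|\hat h(0)|=\gamma\pi L(N)\,|\sum_j t_j|$; under \eqref{c2} this is $o(L(N)N^{-\alpha})$.

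Everything thus reduces to the uniform estimate $\|G'\|_{L^1(0,\infty)}\ll\|t\|_1^2\,L(N)\log(L(N)/\epsilon^*)$. I would use $G'=|\hat h|^2+2\kappa\,\mathrm{Re}(\overline{\hat h}\,\hat h')$ together with $\hat h(\kappa)=\gamma\sum_j t_j e^{-2\pi i u_j\kappa}\tfrac{\sin(\pi\ell\kappa)}{\kappa}\hat\phi(\epsilon_j\kappa)$. By Plancherel and Young's inequality $\int_0^\infty|\hat h|^2=\tfrac12\|h_{\mathbf{u},\boldsymbol{\epsilon}}\|_{L^2(\R)}^2\le\tfrac12\gamma^2\pi^2L(N)\|t\|_1^2$. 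For the remaining term I would differentiate $\hat h$ term by term: the $u_j$‑derivative contributes a factor $|u_j|$ (bounded on the compact set where the $u_j$ range), the derivative of $\sin(\pi\ell\kappa)/\kappa$ is $O(\ell/|\kappa|)$ for $|\kappa|\ge1/\ell$ and $O(\ell^3|\kappa|)$ for $|\kappa|\le1/\ell$, and $\epsilon_j\hat\phi'(\epsilon_j\kappa)$ is harmless by the rapid decay of $\hat\phi$. Combining these with $|\sin(\pi\ell\kappa)/\kappa|\le\min(\pi\ell,1/|\kappa|)$, the Schwartz bound $|\hat\phi(\epsilon_j\kappa)|\le C_n(1+\epsilon^*|\kappa|)^{-n}$ (valid since $\epsilon_j\ge\epsilon^*$), and $\hat\phi(0)=1$, one gets $\kappa|\hat h||\hat h'|\ll\gamma^2\|t\|_1^2L(N)\,|\kappa|^{-1}(1+\epsilon^*|\kappa|)^{-2n}$ for $|\kappa|\ge1/L(N)$ and $\ll\gamma^2\|t\|_1^2L(N)^4\kappa^2$ on $(0,1/L(N))$. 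Integrating, the inner region contributes $O(\|t\|_1^2L(N))$ and the outer region contributes $\ll\gamma^2\|t\|_1^2L(N)\int_{\epsilon^*/L(N)}^\infty y^{-1}(1+y)^{-2n}\,dy\ll\gamma^2\|t\|_1^2\,L(N)\log(L(N)/\epsilon^*)$.

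Putting the two estimates together yields $E_{\mathbf{u},\boldsymbol{\epsilon}}=\frac{1}{(2\pi)^2}\int_0^\infty k|\hat h_{\mathbf{u},\boldsymbol{\epsilon}}(k)|^2\,dk+O(\|t\|_1^2\,L(N)\log(L(N)/\epsilon^*)N^{-\alpha})$, which under \eqref{c1} and \eqref{c2} is the stated $O(\log(L(N))L(N)N^{-\alpha})$ and is consistent with the global error \eqref{globerr} in proposition~\ref{prop:cue}. The \textbf{main obstacle} is the uniform bound on $\|G'\|_{L^1}$: the integrand oscillates on the scale $1/L(N)$, and one must verify that these oscillations, weighted by the $1/\kappa$ decay coming from the indicator, produce only the logarithmic factor $\log(L(N)/\epsilon^*)$ rather than something larger, while the $u_j$‑uniformity is automatic since those variables enter $\hat h$ only through unimodular phases. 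A convenient feature of the argument is that it is insensitive to whether $L(N)$ stays bounded or diverges, as long as $L(N)/(N^\alpha\log N)\to0$.
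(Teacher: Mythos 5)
Your proposal follows essentially the same route as the paper: bound the Riemann‑sum error by the mesh size $N^{-\alpha}$ times the total‑variation ($L^1$) norm of $k\mapsto k|\hat h_{\mathbf{u},\boldsymbol{\epsilon}}(k)|^2$, and then estimate that norm term by term. The only cosmetic differences are in how you organise the derivative (you split $G'=|\hat h|^2+2\kappa\,\mathrm{Re}(\overline{\hat h}\hat h')$ and invoke Plancherel for the first piece, while the paper first uses a trigonometric identity and then produces the four terms $I_1,\dots,I_4$; the two decompositions are algebraically equivalent and give the same rates), and you add a short check that the missing $[0,\delta_N]$ cell is negligible under \eqref{c2} — which the paper leaves implicit. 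One small slip: $\int_0^{\delta_N}|G|\le\delta_N\sup_{[0,\delta_N]}|G|$, not $\delta_N^2\sup|G|$; the extra $\delta_N$ should instead come from $|G(\kappa)|=\kappa|\hat h(\kappa)|^2\le\delta_N\sup|\hat h|^2$ on that interval, so your final bound $\ll\|t\|_1^2L(N)^2N^{-2\alpha}$ is still correct.
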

\begin{proof}
	We use the fact that the error in a Riemann sum approximation is given by the step size (here $N^{-\alpha}$) multiplied by the total variation norm of the function in question. Hence we have to estimate the quantity
	\begin{align} \mathcal{E} := N^{-\alpha}\int_{0}^{\infty}\bigg{|}\frac{d}{dk}\left(k\bigg{|}\sum_{j=1}^{q}t_{j}\frac{e^{-2\pi i k(u_{j}+l/2)}-e^{-2\pi ik(u_{j}-l/2)}}{2\pi k}\hat{\phi}(k\epsilon_j)\bigg{|}^{2}\,\right) \,\bigg{|}dk
	\end{align}
Using the identity
	\begin{equation}
	\begin{split}
		&\bigg{|}\sum_{j=1}^{q}\hat{\phi}(k\epsilon_j)t_{j}(e^{-2\pi i k(u_{j}+l/2)}-e^{-2\pi ik(u_{j}-l/2)})\bigg{|}^{2}\\
		&= 8\sin^{2}(2\pi k l/2)\sum_{j_1 \leq j_2}t_{j}t_{k}\hat{\phi}(k\epsilon_{j_1})\hat{\phi}(-k\epsilon_{j_2})\cos(2\pi k(u_{j_1}-u_{j_2}))\\
		\end{split}
	\end{equation}
	and changing variables $k \to k/\epsilon^{*}$, we see that it is sufficient to bound the quantity
	\begin{equation}
		I_{\epsilon,u} := \int_{0}^{\infty}\bigg{|}\frac{d}{dk}\frac{\sin^{2}(\pi k l)\cos(2\pi k(u_{j_2}-u_{j_1}))}{k}\hat{\phi}(k\epsilon_{j_1})\hat{\phi}(-k\epsilon_{j_2})dk\bigg{|} \label{I1eps}
	\end{equation}
	uniformly in $u$ as $\epsilon \to 0$. Computing the derivative yields four terms $I = I_{1}+I_{2}+I_{3}+I_{4}$ coming from differentiating $1/k$, the two trigonometric terms and the functions $\hat{\phi}$, respectively. The contribution coming from the derivative of $1/k$ is bounded by
	\begin{equation}
		I_{1} \leq \int_{0}^{\infty}\frac{\sin^{2}(\pi k l)}{k^{2}}\,dk = \frac{\pi^{2}}{2}l
	\end{equation}
To compute the contribution coming from the trigonometric terms we change variables $k \to k/\epsilon^{*}$ so that the argument of the Fourier transform is diverging for $k>1$. Then the contribution is dominated by the interval $k \in [0,1]$ due to the rapid decay of $\hat{\phi}(k)$ and we get the bounds
	\begin{equation}
	\begin{split}
		I_{2} &\leq Cl\int_{0}^{1}\bigg{|}\frac{\sin(2\pi k l/\epsilon^{*})}{k}\bigg{|}\,dk = O(l\log(l/\epsilon^{*}))\\
		I_{3} &\leq C2\pi|u_{j_2}-u_{j_1}|\int_{0}^{1}\frac{\sin^{2}(\pi k l/\epsilon^{*})}{k}\,dk = O(|u_{j_2}-u_{j_1}|\log(l/\epsilon^{*}))
		\end{split}
	\end{equation}
A similar estimate yields
\begin{equation}
I_{4} \leq \max\{\epsilon_{j_1},\epsilon_{j_2}\}\int_{0}^{1}\frac{\sin^{2}(\pi k l/\epsilon^{*})}{k}\,dk = O(\max\{\epsilon_{j_1},\epsilon_{j_2}\}\log(l/\epsilon^{*}))
\end{equation}
Multiplying these estimates by the step size $N^{-\alpha}$ we get the error in the Riemann sum approximation
\begin{equation}
\mathcal{E} \leq CN^{-\alpha}\sum_{j_1 \leq j_2}|t_{j_2}t_{j_1}|l\log(l/\epsilon^{*})
\end{equation}
which completes the proof of the Proposition.
\end{proof}

\appendix

\section{Properties of the Hilbert transform, $H^{1/2}$--noise, and a log--correlated Gaussian process} \label{appendix}

We define the Fourier transform of any function $f\in L^1(\R)$ by
\begin{equation} \label{Fourier_T}
\F(f)(\kappa) = \hat{f}(\kappa) =  \int_\R e^{-\pii \kappa x } f(x) dx . 
\end{equation}
The operator $\F$ can be extended to a unitary transformation on $L^2(\R)$ with the Plancherel formula:
\begin{equation*} 
\int_\R \hat{f}(\kappa) \overline{\hat{g}(\kappa)} d\kappa  =  \int_\R  f(x) \overline{g(x)} dx,  
\end{equation*}
for any functions $f, g \in L^2(\R\to\C)$. \\

We define the Hilbert transform of any function $f\in L^1(\R)$, 
\begin{equation} \label{Hilbert_T}
\Hi(f)(x) =  \frac{1}{\pi}\dashint_\R \frac{f(u)}{x-u} du ,
\end{equation}
where the integral is defined in the principal value sense. The Hilbert transform can also  be extended to a bounded operator on $L^2(\R)$ which satisfies:
\begin{equation}\label{Stein}
\widehat{\Hi(f)}(\kappa) =  - i \sgn(\kappa)   \hat{f}(\kappa)
\end{equation}
where $\sgn(\cdot)$ is the sign function. In particular, the identity \eqref{Stein} implies that $\Hi$ is invertible on $L^2(\R)$ and $\Hi^{-1} = -\Hi$.  
Moreover, let us mention that if $f\in L^1\cap L^2(\R)$ is absolutely continuous (i.e.~$f'\in L^1(\R)$), then the Hilbert transform of the function $f$ is differentiable on $\R$ and
\begin{equation} \label{Hilbert_1}
\frac{d\Hi(f)}{d\kappa} = \Hi(f') . 
\end{equation}


We define the Cauchy transform of any function $f\in L^1(\R)$, 
\begin{equation} \label{Cauchy_T}
\mathscr{C}(f)(z) =\frac{1}{\pii}\int_\R \frac{f(x)}{x-z} dx .
\end{equation}
This function is analytic in both the lower and upper half planes, denoted $\C_{\pm}$. Moreover, its boundary values are given (in $L^2$ or pointwise if the limits make sense) by 
 the  Plemelj-Sokhotski formula, for all $x\in\R$,
\begin{equation} \label{PS} 
\mathscr{C}(f)_\pm(x) = \pm \frac{f(x)}{2}  + \frac{i}{2} \Hi(f)(x)  . 
\end{equation}

We define the Sobolev space 
\begin{equation} \label{Sobolev_space}
H^{1/2}(\R)
=\left\{ f \in L^2(\R\to \R) :  \int_{\mathbb{R}} |\kappa| \big|\hat{f}(\kappa)\big|^2 d\kappa <\infty  \right\} . 
\end{equation}
This is a Hilbert space equipped with the  inner-product
\begin{equation} \label{IP_1}
\langle f; g \rangle_{H^{1/2}} = \int_\R |\kappa| \hat{f}(\kappa) \hat{g}(-\kappa) d\kappa .
\end{equation}
There are other formulae for the inner product \eqref{IP_1} which do not involve the Fourier transform. For any functions $f, g\in C^1(\R)$ such that $f, g \in L^2(\R)$, we claim that
\begin{align}
\langle f; g \rangle_{H^{1/2}} 
&\label{IP_2}
= \frac{1}{4\pi^2} \iint_{\R^2}  \frac{f(x)-f(y)}{x-y}\frac{g(x)-g(y)}{x-y} dxdy \\
&\label{IP_3}
= \frac{-1}{2\pi} \int_\R  f'(u) \Hi(f)(u) du .
\end{align}
It can be checked that  formula \eqref{IP_2} holds for any functions $f, g\in H^{1/2}(\R)$, while \eqref{IP_3} holds as long as $\F(f') \in L^2(\R)$ and    
$\F(f')(\kappa) =  \pii \kappa \hat{f}(\kappa)$. \\

We define $\Xi$ to be the Gaussian noise associated with the Hilbert space $H^{1/2}(\R)$, see for instance~\cite{Janson97}. That is $\Xi = \{ \Xi(f)\}_{f\in H^{1/2}(\R)}$
is a Gaussian process with covariance structure:
\begin{equation} \label{covariance_Xi}
\E{\Xi(f) \Xi(g)} = \langle f; g \rangle_{H^{1/2}} . 
\end{equation}
Observe that, if $\J_u(x) = \pi \1_{|x-u| \le \ell/2}$, then  
\begin{equation}  \label{J_hat} 
\widehat{\J_u}(\kappa) = e^{-\pii u \kappa} \frac{\sin(\pi\ell\kappa)}{\kappa} . 
\end{equation}
So that, if we define $\widehat{Q}(\kappa)=\frac{\sin^{2}(\pi \ell \kappa)}{|\kappa|}$,  it is easy to check that despite the fact the functions $\J_u \notin H^{1/2}(\R)$, we have
\begin{equation}\label{Q_covariance}
\langle \J_u ; \J_v \rangle_{H^{1/2}} = Q(u-v)
\end{equation}
for all $u\neq v$. Hence, in a \textit{formal} sense, the log-correlated Gaussian field $G$ with zero mean and covariance function $Q$ can be realized as $G(u) = \Xi(\J_u)$. This is only formal because the function $\J_u$ does not belong to the domain of the noise $\Xi$.  
However we may rigorously define regularizations of the field $G$ using this procedure.
 For any $\phi \in\mathcal{D}_0$ as in \eqref{D}, and $\epsilon>0$, we let
 \begin{equation} \label{G_regularization}
 G_{\phi,\epsilon}(u) =\Xi( \J_u\ast \phi_\epsilon) . 
\end{equation}
Then, by formulae \eqref{IP_1}, \eqref{J_hat}, and the definition of $Q$,  the field  $ G_{\phi,\epsilon}$ has the correlation structure: 
\begin{align} \notag
\E{ G_{\phi,\epsilon}(u),  G_{\psi,\delta}(v)}
& = \langle \J_u \ast \phi_\epsilon  ; \J_v \ast \psi_\delta \rangle_{H^{1/2}}  \\
&\label{covariance_3}
=\int_\R e^{-\pii (u-v)\kappa} \hat\psi(\delta\kappa) \overline{\hat\phi(\epsilon\kappa)} \widehat{Q}(\kappa)  d\kappa 
\end{align}
for any $\phi, \psi \in \mathcal{D}_0$, $\epsilon, \delta>0$ and $u,v\in\R$. 
Note that by Plancherel's formula, we may also rewrite formula \eqref{covariance_3} as 
\begin{align*}
\E{ G_{\phi,\epsilon}(u),  G_{\psi,\delta}(v)}= \iint_{\R^2} \phi_\epsilon(u-x) \psi_\delta(v-y) Q(x-y) dxdy  . 
\end{align*}
Then, when it is convenient, we shall also denote the field  $G_{\phi,\epsilon}$ by $G\ast \phi_\epsilon$. Moreover, the log-correlated field $G$ can be realized as $G=\lim_{\epsilon\to0}G_{\phi,\epsilon}$ in the sense of random distributions. It is not difficult to make this convergence rigorous, \textit{e.g.} by using the asymptotics of Section~\ref{sect:covariance} for the covariance kernel \eqref{covariance_3}, but we do not pursue this here.

\section{Gaussian Multiplicative Chaos} \label{sect:GMC}

In this section, we review in further detail the theory of Gaussian Multiplicative Chaos (GMC) with respect to the Lebesgue measure on a compact subset $\A\subset \R^d$. 
This theory which originates in the work of Mandelbrot and Kahane \cite{Mandelbrot74a, Mandelbrot74b, KP76, Kah85} aims at defining the exponential of a log-correlated random field $G$, denoted formally by
\begin{equation}\label{chaos_measure}
\nu^\gamma(dx) = e^{\gamma G(x) - \frac{\gamma^{2}}{2}\mathbb{E}(G(x)^{2})} dx . 
\end{equation}
The original motivation to study such an object goes back to the work of Kolmogorov who proposed that the measure $\nu^\gamma$ should describe the energy dissipation in a turbulent fluid; c.f.~\cite{PGC16} for a modern reference.  Another motivation comes from the fact that $\nu^\gamma$ can be interpreted as the Boltzmann-Gibbs measure associated with the {\it random Hamiltonian} $G$.
Then, this measure describes the equilibrium configuration of a particle in a very rough landscape and $\gamma>0$ plays the role of the inverse-temperature and is usually called the intermittency parameter. In fact, sampling from the measure  $\nu^\gamma$ gives information about the points where the field $G$ takes unusually high values known as {\it $\gamma$-thick points}, see~\cite{HMP10}. In particular, there exists a critical value of $\gamma$ above which no such points exist and the measure \eqref{chaos_measure} needs to be renormalized in a different way and becomes purely atomic. This is known as the freezing transition in the theory of spin glasses and it has been observed that the behavior of $\nu^\gamma$ at criticality is related to the law of the maximum of the field $G$ \cite{FB08, FK14}. Recently, there have also been intense developments in the case where $G$ is the Gaussian Free Field associated to a domain in the complex plane. Then, the chaos measure \eqref{chaos_measure} is known as the Liouville measure and it has been one of the key inputs in a program aiming at giving a mathematically rigorous construction of Liouville quantum gravity and Liouville quantum field theory, c.f.~the recent results of \cite{DKRV16} and \cite{MS16}. The latter paper is concerned with developing an important program of {\it imaginary geometries} and {\it Liouville quantum gravity and the Brownian map} which aims at proving that the Liouville measures are central objects which arise, for instance to describe the scaling limit of random planar maps. In addition various KPZ relations have been established \cite{BS09, DS11, RV11}. For a more detailed introduction and further references to these topics, we refer to the lecture notes \cite{Garban13, Berestycki16, RV16} or the survey \cite{RV14}. Measures of the type \eqref{chaos_measure} also started to play an increasingly important role in random matrix theory \cite{W15,BWW16} and in statistics of the Riemann zeta function high up on the critical line \cite{SW16}.

 Usually, the random measure $\nu^\gamma$ is defined by first regularizing the field $G$ in some way and then by taking a limit as the regularization tends to zero. There have been many important developments in understanding this procedure and the so-called subcritical case is now well understood for Gaussian regularizations \cite{Kah85, RV10, Shamov16, Berestycki15}. 

To be specific, let $G$ be a Gaussian process on $\A$ with a covariance kernel:
$$T(x,y) = - \log|x-y| + g(x,y) $$
where the function $g:\A^2\to\R\cup\{-\infty\}$ is continuous as an extended function and there exists a constant $C>0$ so that for all $x,y\in \A$,
\begin{equation} \label{g_bound}
 g(x,y) -\log^+|x-y|   \le C .
\end{equation}
We introduce this general setting since our main example is a stationary Gaussian process on $\R$ with covariance kernel $Q$ given by \eqref{cov}. 
Note that one usually assumes that $g$ is a continuous and bounded function, but these assumptions can be relaxed without changing the general theory, as long as a condition such as \eqref{g_bound} holds. 
Because of the singularity of the kernel $T$ on the diagonal, $G$ is not defined pointwise and needs to be interpreted as a random generalized function. 
Formally $G= \{ G(f)\}_{f\in C(\A)}$  is a Gaussian process with covariance structure:
$$
\E{G(f)G(g)} = \iint_{\R^2} f(x) g(y) T(x,y) dxdy . 
$$
In general, the definition of $G$ can be extended to a more general class of test functions than $C(\A)$, or even to certain classes of measures. To define the exponential measure \eqref{chaos_measure}, one can consider a regularization of the process $G$ coming from the convolution with an approximate delta function. This approach was introduced by Robert and Vargas in \cite{RV10} and developed further by Berestycki in \cite{Berestycki15}.    Namely, given $\epsilon>0$ and a mollifier $\phi$ (i.e.~a sufficiently smooth and light-tailed probability density function on $\R$),  we define
$$G_{\phi,\epsilon} =  G \ast \phi_\epsilon
\quad\text{ where }\quad
 \phi_\epsilon(x) = \epsilon^{-1} \phi(x/\epsilon), 
$$
and for any $\gamma>0$,
\begin{equation} \label{GMC_1}
\nu^\gamma_{\epsilon}(dx) = \exp\left(
\gamma G_\epsilon(x) - \frac{\gamma^2}{2} \E{G_\epsilon(x)^2}\right) dx. 
\end{equation}
Then, for any function $w\in L^1(\A\to\R_+)$ uniformly bounded, if $\gamma^{2} < 2d$, we let
\begin{equation} \label{GMC_2}
\nu^\gamma(w) =  \lim_{\epsilon\to0 } \nu^\gamma_{\epsilon}(w) .  
\end{equation} 
The random measures $\nu^\gamma$ are called the multiplicative chaos measures associated to the Gaussian process with  covariance kernel $T$. 
The major achievements of the GMC theory are that the 
limit \eqref{GMC_2} exists in probability (and almost surely in certain cases) and that it does not depend on $\phi$ for a large class of mollifiers. Moreover, the measure $\nu^\gamma$ is non-trivial if and only if $\gamma^2<2d$.  
In the critical ($\gamma=\sqrt{2d}$) and supercritical ($\gamma^2>2d$) regimes, one needs different normalizations than \eqref{GMC_1} to make sense of the GMC in a non-trivial way, c.f.~\cite[Section~6]{RV14} and reference therein.
In this paper, we will focus on the subcritical regime, in which case by Theorem~\ref{th:GMCintro},   the limit \eqref{GMC_2} holds in $L^1(\mathbb{P})$ and the  normalization is such that
\begin{equation*}
\E{\nu^\gamma(w) } =  \int w(u) du . 
\end{equation*} 
Moreover, for any $q\in\N$ such that $q\gamma^2 <2$, it is not difficult to show that
\begin{equation} \label{G_moments}
\E{\nu^\gamma(w)^q}  
:= \lim_{\epsilon\to0} \E{\nu_{\epsilon}^\gamma(w)^q}
=  \int_{\A^q} \exp\Bigg(\gamma^2\hspace{-.15cm}\sum_{1\le j<k\le q}\hspace{-.15cm} T(u_j,u_k) \Bigg)\prod_{k=1}^q w(u_k) du_k .
\end{equation}
In particular, in dimension $d=1$,  by a change of variables, this implies that for any $x_0\in\R$, 
\begin{equation*} 
\E{\nu^\gamma\big([x_0-\tfrac{r}{2}, x_0+\tfrac{r}{2}]\big)^q} 
=  r^{\xi(q)} \int_{[0,1]^q}\prod_{1\le j<k\le q}\hspace{-.15cm} |u_j-u_k|^{-\gamma^2}e^{\gamma^2 g\big(x_0+ r(u_j-\frac{1}{2}), x_0+ r(u_k-\frac{1}{2})\big)} \prod_{k=1}^q du_k ,
\end{equation*}
where $\xi(q) =  q- \gamma^2 \frac{q(q-1)}{2}$,
so that
$$
\E{\nu^\gamma\big([x_0-\tfrac{r}{2}, x_0+\tfrac{r}{2}]\big)^q}  \sim  r^{\xi(q)} \S(q;\gamma^2/2) e^{\gamma^2\binom{q}{2}g(x_0,x_0)} 
\qquad\text{as } r\to0 ,
$$
where
\begin{equation} \label{Selberg}
\S(n;\tilde{\gamma})
:=\int_{[0,1]^n}\prod_{i\neq j} |u_j-u_k|^{-\tilde{\gamma}} \prod_{k=1}^n du_k
= \prod_{j=0}^{n-1} \frac{\Gamma(1+j \tilde{\gamma})^2 \Gamma(1+\tilde{\gamma}+j\tilde{\gamma}))}{\Gamma(2+(n+j-1)\tilde{\gamma})\Gamma(1+\tilde{\gamma})} 
\end{equation}
is a Selberg integral. 
The quadratic function $\xi(q)$ is called the structure exponent and it describes the multi-fractal properties of the random measure $\nu^\gamma$, c.f.~\cite[section~2.3]{RV14}. 
Finally, since the Selberg integral converges if and only if $\Re\{n\tilde{\gamma}\}<1$, 
 this shows that the condition $q\gamma^2 <2$ in \eqref{G_moments} is sharp. \\
 
 Let us conclude this section by stating a result about the convergence of the GMC measure in the so-called $L^2$-phase ($\gamma^2<d$).
  In particular, we will need this result 
 in Section~\ref{sect:L^2} in order to identify the law of the multiplicative chaos measure coming from counting statistics of the CUE or sine process. 
 In addition, the strategy of the proof will be re-used  and generalized in Sections~\ref{sect:L^2} and~\ref{sect:L^1} to construct multiplicative chaos measures for the asymptotically Gaussian fields discussed in the introduction.
  
 \begin{proposition} \label{L^2_GMC}
Let $q$ be an even integer and $\gamma>0$ so that $q\gamma^2<2d$. For any $w\in L^1(\A)$ uniformly bounded, the random variable $\nu_{Q,\epsilon}^\gamma(w)$ given by \eqref{GMC_1} converges as $\epsilon\to0$  in $L^q(\mathbb{P})$ to a random variable $\nu^\gamma_Q(w)$ which does not depend on the mollifier $\phi$ subject to the conditions that $\phi$ is smooth and $\phi \in \mathcal{D}_\alpha$ for some  sufficiently large $\alpha>0$, \eqref{D}.
\end{proposition}

\proof
This follows from a standard argument; c.f.~for instance the proof of \cite[Theorem~2.3]{RV16}.
 First, given a mollifier $\phi$, one can prove that $\nu_{\epsilon}^\gamma(w)$ is a Cauchy sequence in $L^q(\mathbb{P})$. When $q$ is an even integer, this just boils down to checking  that 
\begin{equation}\label{condition_1}
\E{G_{\phi,\epsilon}(x) G_{\phi,\epsilon'}(x')} \to T(x,x')
\end{equation}
as $\epsilon, \epsilon'\to0$ in such a way that we can apply the dominated convergence theorem. To prove  that the limit does not depend on $\phi$, if $\nu_{\epsilon}^{\prime\gamma}$ is the measure associated to the Gaussian process $G_{\psi,\epsilon}$ for a second mollifier $\psi$, then it suffices to show that 
$$
\lim_{\epsilon\to0}\E{| \nu_{\epsilon}^{\prime\gamma}(w) - \nu_{\epsilon}^{\gamma}(w)|^q} =0.
$$
This limit follows in a similar fashion  by checking that as $\epsilon\to0$, 
\begin{equation*}
\E{G_{\phi,\epsilon}(x) G_{\psi,\epsilon}(x')} \to T(x,x') .  \qed\\
\end{equation*}

\begin{remark}\label{rk:Berestycki}
In what follows, the condition \eqref{condition_1} is replaced
by Assumption~\ref{kernel_asymp}. In fact, by going carefully through the proof in \cite{Berestycki15}, it is not difficult to check that  if  
$$ T_{\epsilon,\delta}(x,x') = \E{G_{\phi,\epsilon}(x) G_{\phi,\delta}(x')}
$$
satisfies Assumption~\ref{kernel_asymp}, then for any $\gamma <\sqrt{2d}$, 
 $\nu_{\epsilon}^\gamma(w)$  converges in $L^1(\mathbb{P})$ to $\nu^\gamma(w)$  as $\epsilon\to0$. 
Moreover, for the stationary Gaussian process on $\R$ with covariance kernel $Q$ given by \eqref{cov} which arises in Theorem~\ref{th:gmc} for the mesoscopic CUE, as well as in Theorem~\ref{thm:sine_GMC} for the sine process,
it is proved in Section~\ref{sect:covariance} that Assumption~\ref{kernel_asymp} holds for any mollifier $\phi \in \mathcal{D}_\alpha$  for any $\alpha>0$. 
\end{remark}

\bibliographystyle{plain}
\bibliography{LamOstSim}
\end{document}